\tiny\color{gray},
\newcommand\TikZ[1]{\begin{matrix}\begin{tikzpicture}#1\end{tikzpicture}\end{matrix}}
\theoremstyle{definition}
\newtheorem{theorem}{Theorem}
\newtheorem{lemma}[theorem]{Lemma}
\newtheorem{corollary}[theorem]{Corollary}
\newtheorem{definition}[theorem]{Definition}
\newtheorem{remark}[theorem]{Remark}
\newtheorem{question}[theorem]{Question}
\newtheorem{example}[theorem]{Example}
\newtheorem{answer}[theorem]{Answer}
\DeclareMathOperator{\End}{End}
\DeclareMathOperator{\Ker}{Ker}
\DeclareMathOperator{\Res}{Res}
\DeclareMathOperator{\Spec}{Spec}
 \def\<{\langle}
  \def\>{\rangle}
\newcommand{\Hdeg}{\mathrm{H}^{\mathrm{deg}}} 
\newcommand{\sv}{\mathop{\mathrm{sV}\hspace{-2.5mm}\mathrm{V}}\nolimits} 
\newcommand{\vv}{\mathop{\mathrm{V}\hspace{-2.5mm}\mathrm{V}}\nolimits}
\newcommand{\sBr}{\mathsf{A}}
\newcommand{\TL}{\mathsf{TL}}
\newcommand{\Infl}{\mathop{\mathrm{Infl}}}
\newcommand{\K}{\mathbb{C}}
\newcommand{\Z}{\mathbb{Z}}
\newcommand{\N}{\mathbb{N}}
\newcommand{\diag}{\mathrm{diag}}
\newcommand{\CC}{\mathrm{C}}
\newcommand{\JJ}{\mathrm{J}}
\newcommand{\LL}{\mathrm{L}}
\newcommand{\MM}{\mathrm{M}}
\newcommand{\WW}{\mathrm{W}}
\newcommand{\Id}{\mathrm{Id}}
\newcommand{\cB}{\mathcal{B}}
\newcommand{\ct}{\mathrm{ct}}
\newcommand{\yminusone}{$-1$}
\newcommand{\yminustwo}{$-2$}
\newcommand{\yminusthree}{$-3$}
\newcommand{\yminusfour}{$-4$}
\newcommand{\yminusfive}{$-5$}
\begin{document}

\title[Calibrated representations of periplectic Brauer algebras]{Irreducible calibrated representations of periplectic Brauer algebras and hook representations of the symmetric group}

\author{Mee Seong Im}
\address[M.S.I.]{Department of Mathematical Sciences, United States Military Academy, West Point, NY 10996 USA}
\email{meeseongim@gmail.com}
\author{Emily Norton}
\address[E.N.]{Max Planck Institute for Mathematics, 53111 Bonn
Germany}
\email{enorton@mpim-bonn.mpg.de}



\begin{abstract}
  We construct an infinite tower of irreducible calibrated representations of periplectic Brauer algebras on which the cup-cap generators act by nonzero matrices. As representations of the symmetric group, these are exterior powers of the standard representation (i.e. hook representations). Our approach uses the recently-defined degenerate affine periplectic Brauer algebra, which plays a role similar to that of the degenerate affine Hecke algebra in representation theory of the symmetric group. We write formulas for the representing matrices in the basis of Jucys--Murphy eigenvectors and we completely describe the spectrum of these representations. The tower formed by these representations provides a new, non-semisimple categorification of Pascal's triangle. Along the way, we also prove some basic results about calibrated representations of the degenerate affine periplectic Brauer algebra.
\end{abstract}  
\maketitle


\section*{Introduction}

The periplectic Brauer algebra $\sBr_n$ is an example of a finite-dimensional algebra with a ``local system of generators," in the terminology of Okounkov and Vershik \cite{OkounkovVershik}. It is a diagram algebra of crossing lines and curving arcs including the usual generators $s_1,\dots,s_{n-1}$ and relations of the symmetric group $S_n$, echoed by an additional set of generators $e_1,\dots,e_{n-1}$ and relations which produce a signed Temperley-Lieb algebra $\TL^-_n$ with loops evaluated at $0$. Together, the signed Temperley--Lieb algebra and the symmetric group generate the periplectic Brauer algebra $\sBr_n$, according to additional rules governing the interaction between the two algebras $\K S_n$ and $\TL^-_n$ in $\sBr_n$. The result is a unique and exceptional algebra: if we impose any other choice of value $\delta$ for the loops, then the algebra cannot exist, and this sets it apart from the usual Brauer algebra $\mathsf{Br}_n(\delta)$ which can be defined for any loop-value $\delta$.

The representation theory of the periplectic Brauer algebra is like a partner dance between $S_n$ and $\TL_n^-$ with exaggerated binary gender roles. The symmetric group leads and the algebra of curves follows, embellishing and adding complexity. Cell modules are labeled not just by partitions of $n$, but by layer upon layer of partitions -- partitions of $n$, $n-2$, $n-4, \ldots$ and all of these cell modules have an irreducible quotient except for the cell module labeling the empty partition of $0$ when $n$ is even \cite{Coulembier}. The filtration of a cell module labeled by a partition $\mu$ contains an irreducible representation labeled by $\lambda$ in its Jordan--H\"{o}lder filtration exactly once if and only if the Young diagram of $\mu$ fits inside the Young diagram of $\lambda$ and their difference belongs to a certain set of skew Young diagrams, and this completely describes the composition series of the cell modules \cite[Theorem 1]{CoulembierEhrig1}. Many open questions remain, however, such as determining the branching graph of the irreducible representations.

A famous paper by Okounkov and Vershik \cite{OkounkovVershik} reinvents the representation theory of the symmetric group $S_n$ starting from the tower of algebras $\K\cong \K S_1\subset \K S_2\subset \K S_3\subset\dots\subset \K S_n\subset \K S_{n+1}\subset\dots$
and the commutative subalgebra of $\K S_n$ generated by the Jucys--Murphy elements $X_1,\dots,X_n$. It is interesting to consider some analogues of this theory for periplectic Brauer algebras. 
By adding a strand on the right again and again, there is a chain of inclusions $\K\cong \sBr_1\subset\sBr_2\subset\sBr_3\subset\dots\subset\sBr_n\subset\sBr_{n+1}\subset\dots$
 going on forever, making a tower of algebras. Each algebra $\sBr_n$ has a large commutative subalgebra generated by the periplectic Jucys--Murphy elements $Y_1,\dots,Y_n$ defined in \cite{Coulembier}. The Jucys--Murphy elements of $\sBr_n$ naturally generalize the Jucys--Murphy elements of $\K S_n$. Recall that the Jucys--Murphy elements of the symmetric group are defined recursively as:
 $$X_1:=0,\quad X_{j+1}=s_jX_js_j+s_j.$$
  The Jucys--Murphy elements of $\sBr_n$ are defined recursively as:
$$Y_1:=0,\quad Y_{j+1}=s_jY_js_j+s_j+e_j.$$
Just as $X_n$ commutes with $\K S_{n-1}$, $Y_n$ commutes with $\sBr_{n-1}$ under the embedding of the latter as above \cite{Coulembier}.

A major difference between $\K S_n$ and $\sBr_n$ is that $\sBr_n$ is not a semisimple algebra \cite{Moon},\cite{Coulembier}. In contrast to the situation for $\K S_n$, there is no reason an arbitrary irreducible representation of $\sBr_n$ should possess a basis of joint eigenvectors for the Jucys--Murphy elements $Y_j$. It is an interesting question to determine when this happens. 
 In this paper, we build a natural series of irreducible representations of $\sBr_n$ which have a basis on which $Y_j$ acts as a diagonal matrix for all $j=1,\dots,n$; we call representations with such a basis \textit{calibrated}. As an example of an irreducible calibrated $\sBr_n$-representation, one can always take $\LL(\lambda)$ for $\lambda$ a partition of $n$, because $\K S_n$-mod embeds in $\sBr_n$-mod via $S(\lambda)\rightarrow \LL(\lambda)$, but this is not an interesting example since the representation theory of the symmetric group is known. For a fixed $n$, our family of irreducible calibrated representations cuts transversely across the filtration layers of $\sBr_n$-mod. The calibrated representations we find are very natural: as $S_n$-representations they are the exterior powers of the standard representation of $S_n$, so are irreducible already as $S_n$-representations. The matrices by which the generators $e_i$ act are very similar to the matrices by which the $s_i$ act -- they are obtained by removing the diagonal entries and changing some signs. And there are simple recursive formulas for the eigenvalues of the Jucys--Murphy elements on these representations; the eigenvalues of $Y_{j+1}$ are obtained from the eigenvalues of $Y_j$ by adding or subtracting $1$.

In order to construct interesting calibrated representations of $\sBr_n$, we study the degenerate affine periplectic Brauer algebra $\sv_n$ defined in \cite{us+},\cite{ChenPeng}. This is the algebra generated by $\sBr_n$ and commuting formal variables $y_1,\dots,y_n$ satisfying the same relations with the generators $s_i$ and $e_i$ of $\sBr_n$ as the Jucys--Murphy elements $Y_j$. The definition and use of this algebra $\sv_n$ follow from the same philosophy that leads to the definition and use of the degenerate affine Hecke algebra $\Hdeg_n$ in the representation theory of $S_n$ as in the work of Okounkov and Vershik \cite{OkounkovVershik} and Kleshchev \cite{Kleshchev}. 

We now summarize the contents of the paper and our results. In Section \ref{algebras}, we introduce the algebras $\sBr_n$ and $\sv_n$ and discuss how they are related to each other as well as to more familiar algebras such as $\K S_n$ and the degenerate affine Hecke algebra, and what consequences this has for their internal structure and their representation theory. We explain how $\sv_n$ inherits a filtration by ``cup-cap ideals" from $\TL^-$, we describe the Jacobson radical of $\sv_2$ and put some upper and lower bounds on the Jacobson radical of $\sv_n$. In Section \ref{cali reps and JM elts} we study calibrated representations of $\sBr_n$ and $\sv_n$. We prove that the center of $\sv_n$ acts trivially on any calibrated $\sv_n$-representation, that is, viewed as a symmetric polynomial an element of $Z(\sv_n)$ acts by its constant term (Theorem \ref{Theta}, Corollary \ref{Theta cor}). Using some basic results about calibrated $\sv_2$-representations, which was the subject of a previous paper by the authors and other collaborators \cite{some of us}, we analyze the eigenvalues of $y_1,\dots,y_n$ on calibrated $\sv_n$-representations and show that the eigenvalues of $y_j$ are integers so long as the eigenvalues of $y_1$ are integers (Theorem \ref{integer evalues}); as a corollary, the eigenvalues of $Y_j$ on any calibrated $\sBr_n$-representation are always integers (Corollary \ref{evalues sBr ints}). Furthermore, we work out conditions under which we can write a simple formula for these eigenvalues (Lemma \ref{evalues closed form}); this formula is used to study the family of representations constructed in Section \ref{irreduciblecali}.

In Section \ref{irreduciblecali} we tackle the construction of a family of irreducible calibrated $\sBr_n$-representations endowed with a nonzero action of $\TL_n^-$ for any $n>2$. The starting point is Theorem \ref{reflection rep}. With the right choice of representing matrices for $s_i$, the $(n-1)$-dimensional standard representation $V_n$ of $S_n$ extends to a one-parameter family of calibrated representations $\CC_\alpha(V_n)$ of $\sv_n$ factoring through $\sBr_n$ when the parameter $\alpha$ is set equal to $0$. This gives us the first explicit example of an irreducible calibrated representation of $\sv_n$ or $\sBr_n$ for any $n>2$ which has a nonzero action of the $e_i$'s. Once we have the representation $\CC_\alpha(V_n)$ in hand, we compute the exterior powers $\Lambda^k V_n$ as $\K S_n$-representations with respect to the basis of $y$-eigenvectors, and then extend these to calibrated representations $\CC_\alpha(\Lambda^k V_n)$ of $\sv_n$ by writing formulas for the actions of $e_i$ and $y_j$ similar to those used for $\CC_\alpha(V_n)$. We prove in Theorem \ref{exterior powers} that this does indeed define a representation of $\sv_n$, and thus of $\sBr_n$ when $\alpha=0$. Since $\Lambda^k V_n$ is already irreducible as a $\K S_n$-representation, $\CC_\alpha(\Lambda^k V_n)$ is irreducible as an $\sBr_n$- or $\sv_n$-representation.

The results of Section \ref{irreduciblecali} considered for all $n$ give us an infinite series of irreducible calibrated representations of the tower of algebras $\sBr_1\subset\sBr_2\subset\sBr_3\subset\dots$ under the sequence of embeddings $\sBr_{n-1}\hookrightarrow\sBr_n$ adding a vertical strand on the right. A natural question is how these representations are related by restriction. In Section \ref{branching} \ref{res special}, we prove that our collection of irreducible calibrated representations $\{\CC_0(\Lambda^k V_n)\mid n\in\N,\;0\leq k\leq n-1\}$ is closed under taking composition factors of restriction from $\sBr_{n}$ to $\sBr_{m}$, $m<n$. Theorem \ref{res special} states that the restriction of $\CC_0(\Lambda^k V_n)$ from $\sBr_n$ to $\sBr_{n-1}$ is always indecomposable with two irreducible composition factors if $1\leq k\leq n-2$: the head $\CC_0(\Lambda^{k-1}V_{n-1})$ and the socle $\CC_0(\Lambda^k V_{n-1})$. As for the representations $\CC_0(\Lambda^0 V_n)$ and $\CC_0(\Lambda^{n-1} V_n)$, they are just the trivial and the sign representation of $S_n$, respectively, with $e_i$ acting by $0$.

The Bratteli diagram (i.e. branching graph) of  $\{\CC_0(\Lambda^k V_n)\mid n\in\N,\;0\leq k\leq n-1\}$ thus yields a new categorification of Pascal's triangle
with a non-semisimple rule for the arrows in the interior of the triangle. This is not the first time Pascal's triangle has been categorified by a branching rule in a module category: other notable instances of Pascal categorification include the category of modules over planar rook algebras (a semisimple categorification) \cite{PascalTriangle} and the branching rule for the standard modules over the blob algebra \cite{TheBlob} (furthermore, the irreducible quotients of these blob modules have bases given by paths in Pascal's triangle that avoid certain hyperplanes, see \cite{BNS} and take the case $\ell=2$ and $h=1$).
Our Bratteli diagram also gives a combinatorial rule for computing the eigenvalues of the Jucys--Murphy elements on any irreducible calibrated $\sBr_n$-representation of the form $\CC_0(\Lambda^k V_n)$. We substitute one-row and one-column partitions for the representations $\CC_0(\Lambda^k V_n)$ forming the vertices of the Bratteli diagram, and then consider all directed paths from the source vertex to a vertex $\lambda$. The content sequence determined by the unique removable box of each partition $\mu$ along such a path gives an element of the spectrum of $\CC_0(\Lambda^k V_n)$, where $\lambda$ is determined from $k$ and $n$ by an easy formula. See Section \ref{branching}.

\section{The finite and degenerate affine periplectic Brauer algebras }\label{algebras}
\subsection{Definitions of the algebras} The degenerate affine Hecke algebra, also called the graded Hecke algebra, has an easy presentation by generators and relations.
\begin{definition}\cite{Drinfeld},\cite{Lusztig} The degenerate affine Hecke algebra $\Hdeg_n$ is the $\K$-algebra generated by $s_i$ for $i=1,\dots,n-1$ and by $y_j$ for $j=1,\dots,n$ with relations:
\begin{align*}
&s_i^2=1\hbox{ for }1\leq i\leq n-1,\quad s_is_{i+1}s_i=s_{i+1}s_is_{i+1}\hbox{ for }1\leq i\leq n-2,\\
&s_is_j=s_js_i\hbox{ if }|j-i|>1, \quad y_iy_j=y_jy_i \hbox{ for }1\leq i, j\leq n,\\&y_{j+1}=s_jy_js_j+s_j \hbox{ for }1\leq j\leq n-1.
\end{align*}
\end{definition}
\noindent We see the group algebra $\K S_n$ of $S_n$ defined in the first three relations. The last relation echoes the recursive definition of the Jucys--Murphy elements of $\K S_n$, but since $y_1$ is a free variable, in $\Hdeg_n$ these have become abstract entities $y_j$ which generate a polynomial algebra. The idea behind the degenerate affine periplectic Brauer algebra is similar and has a Brauer algebra precursor in \cite{Nazarov}.

\begin{definition}\label{def svn} \cite[Def. 3.1]{ChenPeng}, \cite[Def. 39]{us+}
The degenerate affine periplectic Brauer algebra $\sv_n$ is the $\K$-algebra generated by $s_i$ and $e_i$ for $i=1,\dots,n-1$ and by $y_j$ for $j=1,\dots,n$ subject to the relations:
\begin{align*}
(\vv_1)\;&s_i^2=1\hbox{ for }1\leq i<n,\\
(\vv_2)\;&\mathrm{(i)}\;s_ie_j=e_js_i\hbox{ if }|i-j|>1, \quad\;\;\; \mathrm{(ii)}\; e_ie_j=e_je_i\hbox{ if }|i-j|>1,\\
&\mathrm{(iii)}\;e_iy_j=y_je_i\hbox{ if }j\neq i,i+1,\quad\mathrm{(iv)}\;y_iy_j=y_jy_i\hbox{ for }1\leq i,j\leq n,\\
(\vv_3)\;&\mathrm{(i)}\;s_is_j=s_js_i\hbox{ if }|i-j|>1,\quad \mathrm{(ii)}\;s_is_{i+1}s_i=s_{i+1}s_is_{i+1}\hbox{ for }1\leq i\leq n-2,\\
&\mathrm{(iii)}\;s_iy_j=y_js_i\hbox{ if }j\neq i,i+1,\\
(\vv_4)\;&\mathrm{(i)}\;e_{i+1}e_ie_{i+1}=-e_{i+1},\quad \mathrm{(ii)}\;e_ie_{i+1}e_i=-e_i\hbox{ for }1\leq i\leq n-2,\\
(\vv_5)\;&\mathrm{(i)}\;e_is_i=e_i\hbox{ and }s_ie_i=-e_i\hbox{ for }1\leq i\leq n-1,\quad\mathrm{(ii)}\;s_ie_{i+1}e_i=s_{i+1}e_i, \\
&\mathrm{(iii)}\;s_{i+1}e_ie_{i+1}=-s_ie_{i+1},\quad \mathrm{(iv)}\;e_{i+1}e_is_{i+1}=e_{i+1}s_i,\quad\mathrm{(v)}\;e_ie_{i+1}s_i=-e_is_{i+1}\\&\hbox{ for }1\leq i\leq n-2,\\
(\vv_6)\;&e_i^2=0\hbox{ for }1\leq i\leq n-1,\\
(\vv_7)\;&\mathrm{(i)}\;s_iy_i-y_{i+1}s_i=-e_i-1,\quad\mathrm{(ii)}\;y_is_i-s_iy_{i+1}=e_i-1\hbox{ for }1\leq i\leq n-1,\\
(\vv_8)\;&\mathrm{(i)}\;e_i(y_i-y_{i+1})=-e_i,\quad\mathrm{(ii)}\;(y_i-y_{i+1})e_i=e_i \hbox{ for }1\leq i\leq n-1.\\
\end{align*}
\end{definition}

\begin{remark}\label{better reln} Relation ($\vv_7$) can be conveniently packaged as the single relation $$y_{j+1}=s_jy_js_j+s_j+e_j$$ emphasizing the origin of the $y_j$'s as Jucys--Murphy elements -- multiply each side of the equation by $s_i$ to recover ($\vv_7$). The algebra $\sv_n$ is generated by $e_i$ and $s_i$, $1\leq i\leq n-1$, together with $y_1$.
\end{remark}

\begin{remark}\label{less relns}
Our sign conventions follow \cite[Def. 39]{us+} and differ in some relations from those in \cite[Def. 3.1]{ChenPeng},\cite{Moon},\cite{Coulembier}. We have omitted the relation $e_1y_1^ke_1=0$ which was proved in (\cite{us+},\cite{ChenPeng}) to follow from the other relations. The list of relations above is still not minimal \cite[Rem. 40]{us+}. For example, $(\vv_5)$(ii) follows by multiplying $(\vv_5)$(iii) on the right by $e_i$ then simplifying using $(\vv_4)$(ii). Similarly, $(\vv_5)$(iv) follows from $(\vv_5)$(v) and $(\vv_4)$(i).
\end{remark}

The algebra $\sv_n$ has several important subalgebras. First, there is the polynomial subalgebra $\K [y_1,\dots,y_n]$. Second, there is the periplectic Brauer algebra.
\begin{definition}
The periplectic Brauer algebra $\sBr_n$ is the subalgebra of $\sv_n$ generated by $e_i$ and $s_i$ for $1\leq i\leq n-1$.
 \end{definition}
\noindent  This algebra was first introduced by Moon to study Schur--Weyl duality for the periplectic Lie superalgebra $\mathfrak{p}(n)$ \cite{Moon} and has recently been the focus of much attention \cite{KujawaTharp},\cite{Coulembier},\cite{CoulembierEhrig1},\cite{CoulembierEhrig2}. 
The traditional way to work with such algebras is as diagram algebras: $s_i$ is a \textit{crossing},  $e_i$ is a \textit{cup-cap}, and the elements of $\sBr_n$ are linear combinations of \textit{Brauer diagrams}. A Brauer diagram is a  pairing of $2n$ points, $n$ of them equally spaced on a horizontal line of height $0$ and $n$ of them equally spaced directly above on a horizontal line of height $1$. If two points are paired, they are drawn with a line or \textit{strand} connecting them. We do not draw the $2n$ points in our diagrams but only the strands. The strands which connect a point at the bottom of the diagram with a point at the top of the diagram are called \textit{through strands}. The strands connecting points on the top horizontal line are called \textit{cups}; on the bottom horizontal line, \textit{caps}.
\begin{example}
\begin{align*}&\hbox{a crossing}\qquad\quad\;\hbox{a cup-cap}\qquad\quad\;\hbox{a Brauer diagram in }\sBr_7\\
&\quad\;\TikZ{[scale=.5] \draw
(0,0)node{} to (1,1.5)node{}
(1,0)node{} to (0,1.5)node{}
;}\qquad\qquad\qquad
\TikZ{[scale=.5] \draw
(0,0) arc(180:0:0.5)
(0,1.5) arc(-180:0:.5)
;}\qquad\qquad\qquad
\TikZ{[scale=.5]\draw
(0,0) arc(180:0:1)
(1,0)node{} to (0,3)node{}
(3,0)node{} to (3,3)node{}
(1,3)node{} to (5,0)node{}
(4,0) arc(180:0:1)
(2,3) arc(-180:0:1)
(5,3) arc(-180:0:.5)
;}
\end{align*}
\end{example}
\noindent Multiplication $ab$ of elements $a,b\in\sBr_n$ corresponds to stacking the diagram of $a$ on top of the diagram of $b$. Any closed loop occurring in a diagram makes the whole diagram $0$:
$$ \TikZ{[scale=.5] \draw
(0,0) arc(360:0:0.5)
;}=0.
$$

  The elements $s_1,\dots,s_{n-1}$ generate the subalgebra $\K S_n\subset \sBr_n\subset \sv_n$, while the elements $e_1,\dots, e_{n-1}$ together with $1$ generate a copy of an algebra we denote by $\TL^-_n\subset \sBr_n\subset \sv_n$, which can be thought of as a signed or super version of a Temperley--Lieb algebra, again with any closed loop evaluated at $0$. Note that the subalgebra generated by $e_1,\dots,e_{n-1}$ is not unital; we throw in $1$ to make $\TL^-_n$ a unital subalgebra. Together the algebras $\TL^-_n$ and $\K[y_1,\dots,y_n]$ also generate a subalgebra of $\sv_n$. However, $\K S_n$ and $\K[y_1,\dots,y_n]$ together generate all of $\sv_n$ as every $e_i=y_{i+1}-s_iy_is_i-s_i$, see Remark \ref{better reln}.

\begin{align*}
&\hbox{an element of }S_5\qquad\qquad\qquad\hbox{an element of }\TL_5^-\\
&
\TikZ{[scale=.5]\draw
(0,0)node{} to (3,3)node{}
(1,0)node{} to (4,3)node{}
(2,0)node{} to (1,3)node{}
(3,0)node{} to (2,3)node{}
(4,0)node{} to (0,3)node{}
;}
\qquad\qquad\qquad\qquad\TikZ{[scale=.5]\draw
(0,0) arc(180:0:.5)
(2,0) arc(180:0:.5)
(1,3) arc(-180:0:1.5)
(2,3) arc(-180:0:.5);
\draw[] plot [smooth,tension=.7] coordinates{(0,3)(.5,2)(1.5,1.5)(3.5,1)(4,0)};
;}
\end{align*}  

An intuitive way to think about $\sv_n$ is as the diagram algebra generated by the diagram algebra $\sBr_n$ together with $\K[y_1,\dots,y_n]$ where $y_i$ is a bead in position $i$ at the top or bottom of the diagram, subject to some local relations for moving a bead across a cup or cap, or through a crossing:
\begin{align*}
&\TikZ{[scale=.5] \draw
(0,0)node{} to (1,1.5)node{}
(1,0)node{} to (0,1.5)node{}
(0,0)node[fill,circle,inner sep=2pt]{}
;} = 
\TikZ{[scale=.5] \draw
(0,0)node{} to (1,1.5)node{}
(1,0)node{} to (0,1.5)node{}
(1,1.5)node[fill,circle,inner sep=2pt]{}
;}
-
\TikZ{[scale=.5] \draw
(0,0)node{} to (0,1.5)node{}
(1,0)node{} to (1,1.5)node{}
;}
-
\TikZ{[scale=.5] \draw
(0,0) arc (180:0:.5)
(0,1.5) arc (-180:0:.5)
;}
\qquad\qquad
\TikZ{[scale=.5] \draw
(0,0)node{} to (1,1.5)node{}
(1,0)node{} to (0,1.5)node{}
(1,0)node[fill,circle,inner sep=2pt]{}
;} = 
\TikZ{[scale=.5] \draw
(0,0)node{} to (1,1.5)node{}
(1,0)node{} to (0,1.5)node{}
(0,1.5)node[fill,circle,inner sep=2pt]{}
;}
+
\TikZ{[scale=.5] \draw
(0,0)node{} to (0,1.5)node{}
(1,0)node{} to (1,1.5)node{}
;}
-
\TikZ{[scale=.5] \draw
(0,0) arc (180:0:.5)
(0,1.5) arc (-180:0:.5)
;}
\\
\\
&
\qquad\qquad
\TikZ{[scale=.5] \draw
(0,0) arc (180:0:.5)
(0,1.5) arc (-180:0:.5)
(1,0)node[fill,circle,inner sep=2pt]{}
;} = 
\TikZ{[scale=.5] \draw
(0,0) arc (180:0:.5)
(0,1.5) arc (-180:0:.5)
(0,0)node[fill,circle,inner sep=2pt]{}
;} 
+
\TikZ{[scale=.5] \draw
(0,0) arc (180:0:.5)
(0,1.5) arc (-180:0:.5)
;} 
\qquad\qquad
\TikZ{[scale=.5] \draw
(0,0) arc (180:0:.5)
(0,1.5) arc (-180:0:.5)
(0,1.5)node[fill,circle,inner sep=2pt]{}
;} = 
\TikZ{[scale=.5] \draw
(0,0) arc (180:0:.5)
(0,1.5) arc (-180:0:.5)
(1,1.5)node[fill,circle,inner sep=2pt]{}
;} 
+
\TikZ{[scale=.5] \draw
(0,0) arc (180:0:.5)
(0,1.5) arc (-180:0:.5)
;} 
\end{align*}
and such that beads that are ``far away" (somewhere off to the left or right) commute with a crossing or a cup-cap, and the beads commute with each other.
It is not at all obvious that such an algebra has the basis one would like: a basis of Brauer diagrams with some number of beads (corresponding to $y_i$'s) on each string, pushed to an agreed-upon end of the string in a neat and orderly fashion. Such a beaded Brauer diagram is called a \textit{normal diagram}. Rather than belabor the precise definition, we draw a picture of a normal diagram in $\sv_7$:

$$\TikZ{[scale=.7] \draw
(1,4) arc (-180:0:2)
(2,4) arc (-180:0:0.5)
(0,0) arc (180:0:1.5)
(2,0) arc (180:0:1)
(5,0)node{} to (6,4)node{}
(6,0)node{} to (4,4)node{}
(1,0)node{} to (0,4)node{}
(5,4)node[fill,circle,inner sep=2pt]{}
(4.95,3.7)node[fill,circle,inner sep=2pt]{}
(4.88,3.4)node[fill,circle,inner sep=2pt]{}
(0,0)node[fill,circle,inner sep=2pt]{}
(2,0)node[fill,circle,inner sep=2pt]{}
(2.05,0.3)node[fill,circle,inner sep=2pt]{}
(0,4)node[fill,circle,inner sep=2pt]{}
(0.075,3.7)node[fill,circle,inner sep=2pt]{}
(6,4)node[fill,circle,inner sep=2pt]{}
;}
$$
An important first result about the algebra $\sv_n$ is therefore:
\begin{theorem}\label{PBW}\cite{ChenPeng},\cite{us+}
The algebra $\sv_n$ has a $\K$-basis consisting of all normal diagrams.
\end{theorem}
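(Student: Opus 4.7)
The plan is to prove Theorem \ref{PBW} using the standard two-step template for PBW-type theorems for degenerate affine diagram algebras: first I would show that the normal diagrams span $\sv_n$, and then I would show that they are linearly independent. The second step is the main obstacle.

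For spanning, I would set up a straightening algorithm for arbitrary words in the generators $s_i, e_i, y_j$. The defining relations $(\vv_7)$ and $(\vv_8)$, together with the distant-commutation rules $(\vv_2)$(iii), $(\vv_2)$(iv), and $(\vv_3)$(iii), encode exactly the diagrammatic bead-moving rules pictured in the excerpt. These rules allow me to push any bead on any strand to the agreed-upon end of that strand, at the cost of correction terms of strictly lower total degree in the $y_j$'s (for example, $s_iy_i = y_{i+1}s_i - e_i - 1$ exchanges one bead for either zero beads or a bead on a diagram with a different underlying Brauer part). A double induction on total bead-degree and on the length of the underlying word then reduces any word to one with all beads pushed to the agreed-upon end, whose underlying Brauer part I can further reduce to a normal Brauer diagram using the classical basis theorem for $\sBr_n$ established by Moon.

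For linear independence, the approach I favor is to construct a faithful ``polynomial'' representation of $\sv_n$ on the vector space $W := \sBr_n \otimes_\K \K[y_1, \dots, y_n]$, by analogy with Nazarov's construction for the degenerate affine Brauer algebra. The generators $y_j$ act as multiplication on the polynomial factor, while $s_i$ and $e_i$ act on the left on the $\sBr_n$ factor together with explicit correction terms computed so as to implement the bead-moving rules when a crossing or cup-cap is commuted past the polynomial part. The key step is checking that these operators satisfy every relation of Definition \ref{def svn}; once they do, I obtain a homomorphism $\sv_n \to \End_\K(W)$, and the images of the normal diagrams applied to $1 \otimes 1$ are visibly linearly independent in $W$ since each produces a distinct tensor of a Brauer diagram with a monomial in the $y_j$'s.

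The main obstacle is precisely this relation-checking. Relations $(\vv_4)$, $(\vv_5)$, $(\vv_7)$, and $(\vv_8)$ mix crossings, cup-caps, and beads in nontrivial ways, and the correction formulas must be calibrated so that all of them hold simultaneously; in particular the sign conventions of $(\vv_7)$ and $(\vv_5)$(i) that distinguish $\sv_n$ from the affine Brauer algebra must be respected at every step. A cleaner alternative is a filtration argument: I would equip $\sv_n$ with the filtration by total $y$-degree and observe that the relations force the associated graded to be a quotient of $\sBr_n \otimes \K[y_1,\dots,y_n]$; the spanning statement then gives a surjection whose injectivity follows from a dimension count on each finite-dimensional filtered piece. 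Either route recovers the PBW theorems of \cite{ChenPeng, us+}.
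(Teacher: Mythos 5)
The paper does not actually prove Theorem \ref{PBW}; it cites \cite{ChenPeng} and \cite{us+}, and the footnote attached to the theorem summarizes both: the proof in \cite{ChenPeng} is described as ``a signed version of the proof given by Nazarov for the degenerate affine Brauer algebra,'' while the proof in \cite{us+} ``proceeds by constructing an explicit faithful representation of $\sv_n$ using the intrinsic connection of $\sv_n$ with periplectic Lie superalgebras.'' Your main line of attack --- a straightening algorithm for spanning plus a Nazarov-style faithful polynomial representation for independence --- is therefore essentially the \cite{ChenPeng} proof, not a genuinely different route. That template is sound in outline, and you correctly identify the relation-checking of $(\vv_4),(\vv_5),(\vv_7),(\vv_8)$ with the correct $\sv_n$ signs as the real work. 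What the alternative \cite{us+} proof buys you is that the faithful module comes for free from the Schur--Weyl-type action on tensor powers for $\mathfrak{p}(n)$, so one verifies the relations once in a concrete setting and avoids calibrating a polynomial action from scratch.

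Your ``cleaner alternative'' via the $y$-degree filtration is where there is a genuine gap. The spanning argument gives you a surjection $\sBr_n\otimes\K[y_1,\dots,y_n]\twoheadrightarrow\gr(\sv_n)$ of graded vector spaces, but then asserting that ``injectivity follows from a dimension count on each finite-dimensional filtered piece'' is circular: the dimensions of the filtered pieces of $\sv_n$ are exactly what is unknown, and without an independent lower bound they could very well be strictly smaller than those of $\sBr_n\otimes\K[y_1,\dots,y_n]$ (collapses caused by hidden consequences of the relations are precisely what a PBW theorem rules out). Any such lower bound must come from exhibiting a sufficiently large module --- which is to say, from a faithful (or ``big enough'') representation, so this route does not actually avoid the relation-checking; it quietly presupposes it. If you want to keep the filtration language, the correct statement is that once one has a representation on $W=\sBr_n\otimes\K[y_1,\dots,y_n]$ in which the normal diagrams act by linearly independent operators (e.g.\ by applying them to $1\otimes 1$), then the surjection onto $\gr(\sv_n)$ is forced to be an isomorphism; the filtration does not replace the construction of $W$, it only packages its conclusion.
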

\noindent This can be thought of as an analogue of classical PBW theorems for universal enveloping algebras and flat deformations of skew group rings \footnote{The proof in \cite{ChenPeng} is a signed version of the proof given by Nazarov for the degenerate affine Brauer algebra \cite{Nazarov}. The proof in \cite{us+} proceeds by constructing an explicit faithful representation of $\sv_n$ using the intrinsic connection of $\sv_n$ with periplectic Lie superalgebras \cite[Theorem 2]{us+}.}.

\subsection{Some relations in $\sv_n$}

In addition to the formulas written in \cite[Lemmas 4-9]{us+} there are nice formulas for disentangling a bunch of beads stuck between a crossing and a cup or cap. The proofs of the formulas follow from easy induction arguments and the formulas in \cite[Lemmas 4-9]{us+}.

\begin{lemma}\label{disentangling} Let $a,b\in\mathbb{N}$. The following equalities hold in $\sv_n$:
\begin{align*}
(i) \; &s_iy_{i+1}^be_i=-y_{i+1}^be_i, \qquad (ii) \; s_iy_i^ae_i=-y_i^ae_i, \qquad\quad (iii) \; e_iy_{i+1}^b s_i = e_iy_{i+1}^b, \\
&\TikZ{[scale=.5]
\node[right] at (1,1){b};
\draw
(0,1)node{} to (1,2)node{}
(1,1)node{} to (0,2)node{}
(0,1) arc(-180:0:0.5)
(0,-.5) arc(180:0:0.5)
(1,1)node[fill,circle,inner sep=2pt]{}
;}
=
-\TikZ{[scale=.5]
\node[right] at (1,1){b};
\draw
(0,1)arc(-180:0:0.5)
(0,-.5) arc(180:0:0.5)
(1,1)node[fill,circle,inner sep=2pt]{}
;}
\qquad\quad
\TikZ{[scale=.5]
\node[left] at (0,1){a};
\draw
(1,1)node{} to (0,2)node{}
(0,1)node{} to (1,2)node{}
(0,1) arc(-180:0:0.5)
(0,-.5) arc(180:0:0.5)
(0,1)node[fill,circle,inner sep=2pt]{}
;}
=
-\TikZ{[scale=.5]
\node[left] at (0,1){a};
\draw
(0,1)arc(-180:0:0.5)
(0,-.5) arc(180:0:0.5)
(0,1)node[fill,circle,inner sep=2pt]{}
;}
\qquad\qquad
\TikZ{[scale=.5] 
\node[right] at (1,0){b};
\draw
(0,-1)node{} to (1,0)node{}
(1,-1)node{} to (0,0)node{}
(0,0) arc(180:0:0.5)
(0,1.5) arc(-180:0:0.5)
(1,0)node[fill,circle,inner sep=2pt]{}
;}
=
\TikZ{[scale=.5]
\node[right] at (1,0){b};
\draw
(0,0)arc(180:0:0.5)
(0,1.5) arc(-180:0:0.5)
(1,0)node[fill,circle,inner sep=2pt]{}
;}
\\
\\
(iv) \; &e_iy_i^{a}s_i=e_iy_i^a, \qquad (v)\; e_iy_i^ay_{i+1}^b s_i= \sum_{k=0}^b {b\choose k}e_i y_i^{a+k}, \quad (vi)\;  s_iy_i^ay_{i+1}^be_i=-\sum_{k=0}^a{a\choose k}y_{i+1}^{b+k}e_i. \\
& \TikZ{[scale=.5] 
\node[left] at (0,0){a};
\draw
(0,-1)node{} to (1,0)node{}
(1,-1)node{} to (0,0)node{}
(0,0) arc(180:0:0.5)
(0,1.5) arc(-180:0:0.5)
(0,0)node[fill,circle,inner sep=2pt]{}
;}
=
\TikZ{[scale=.5]
\node[left] at (0,0){a};
\draw
(0,0)arc(180:0:0.5)
(0,1.5) arc(-180:0:0.5)
(0,0)node[fill,circle,inner sep=2pt]{}
;}
\qquad
\TikZ{[scale=.5] 
\node[left] at (0,-1){a};
\node[right] at (1,-1){b};
\draw
(0,-2)node{} to (1,-1)node{}
(1,-2)node{} to (0,-1)node{}
(0,-1) arc(180:0:0.5)
(0,.5) arc(-180:0:0.5)
(1,-1)node[fill,circle,inner sep=2pt]{}
(0,-1)node[fill,circle,inner sep=2pt]{}
;}
=
\sum_{k=0}^b  \binom{b}{k}
\TikZ{[scale=.5]
\node[left] at (0,0){a+k};
\draw
(0,0)arc(180:0:0.5)
(0,1.5) arc(-180:0:0.5)
(0,0)node[fill,circle,inner sep=2pt]{}
;} 
\qquad
\TikZ{[scale=.5]
\node[left] at (0,1){a};
\node[right] at (1,1){b};
\draw
(1,1)node{} to (0,2)node{}
(0,1)node{} to (1,2)node{}
(0,1) arc(-180:0:0.5)
(0,-.5) arc(180:0:0.5)
(0,1)node[fill,circle,inner sep=2pt]{}
(1,1)node[fill,circle,inner sep=2pt]{}
;}
=
-\sum_{k=0}^a  \binom{a}{k}
\TikZ{[scale=.5]
\node[right] at (1,0){b+k};
\draw
(0,0)arc(-180:0:0.5)
(0,-1.5) arc(180:0:0.5)
(1,0)node[fill,circle,inner sep=2pt]{}
;} 
\end{align*}
\end{lemma}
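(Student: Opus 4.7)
The plan is to prove (i) and (iv) by inductions on $b$ and $a$ respectively, and to derive the remaining four identities as formal consequences via binomial expansion.

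Two preparatory identities are needed, both obtained by iterating $(\vv_8)$ and using the commutativity $y_iy_{i+1}=y_{i+1}y_i$ from $(\vv_2)$(iv): the equation $(y_i-y_{i+1})e_i=e_i$ rearranges to $y_ie_i=(y_{i+1}+1)e_i$, whence by induction
\[
y_i^a e_i \;=\; (y_{i+1}+1)^a e_i \;=\; \sum_{k=0}^a \binom{a}{k} y_{i+1}^k e_i;
\]
dually, $e_i(y_i-y_{i+1})=-e_i$ rearranges to $e_iy_{i+1}=e_i(y_i+1)$, yielding $e_iy_{i+1}^b = \sum_{k=0}^b \binom{b}{k} e_iy_i^k$. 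Granting these, I would prove (iv) by induction on $a$. The base case is $e_is_i=e_i$ from $(\vv_5)$(i). For the inductive step, substitute $y_is_i = s_iy_{i+1}+e_i-1$ (a rearrangement of $(\vv_7)$(ii)) into $e_iy_i^{a+1}s_i = e_iy_i^a\cdot y_is_i$, apply the induction hypothesis to $e_iy_i^as_i$, and simplify $e_iy_i^ay_{i+1}$ using commutativity and $e_iy_{i+1}=e_i(y_i+1)$. All terms telescope to the desired $e_iy_i^{a+1}$ plus a residual $e_iy_i^ae_i$. The proof of (i) is completely parallel: induct on $b$, use $s_iy_{i+1}=y_is_i-e_i+1$, and reduce to the vanishing of $e_iy_{i+1}^{b-1}e_i$, which by the second preparatory identity equals $\sum_k\binom{b-1}{k}e_iy_i^ke_i$.

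With (i) and (iv) in hand, identity (ii) follows by writing $s_iy_i^ae_i = s_i\sum_k\binom{a}{k}y_{i+1}^ke_i$ and applying (i) to each summand, and (iii) follows symmetrically from (iv). For (v), I would commute $y_i^a$ past $y_{i+1}^b$, expand $e_iy_{i+1}^b = \sum_k \binom{b}{k} e_iy_i^k$, and apply (iv) termwise; formula (vi) is treated analogously, using the expansion of $y_i^ae_i$ and appealing to (i).

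The main obstacle, and the step where the calculation ceases to be purely mechanical, is the vanishing $e_iy_i^ke_i=0$. This is precisely the ``omitted relation'' recorded in Remark \ref{less relns}: it is shown in \cite{us+,ChenPeng} to follow from the remaining relations of $\sv_n$, and we invoke it directly to close the inductions for (iv) and (i). Once this input is available, all six pictorial rules fall out of binomial bookkeeping.
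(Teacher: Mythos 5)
Your proposal is correct and follows essentially the same route the paper intends: induction on the number of beads using $(\vv_7)$, with the residual closed-loop term $e_iy_i^ke_i$ killed by the omitted relation of Remark \ref{less relns}, and the remaining identities obtained by the binomial expansions coming from iterating $(\vv_8)$. Your identification of $e_iy_i^ke_i=0$ as the one non-mechanical input is exactly right, and it is legitimate for all $i$ (not just $i=1$) since its derivation uses only the local relations among $e_i,s_i,y_i,y_{i+1}$, which are identical for every $i$.
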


\subsection{Basics from representation theory}Write $\sv_n$-mod for the category of finite-dimensional $\sv_n$-representations and $\sBr_n$-mod for the category of finite-dimensional $\sBr_n$-representations.
\textit{We will only consider finite-dimensional representations in this paper and may often just write ``representation" when we mean finite-dimensional representation.} 

\noindent 
Practically speaking, we will think of the elements of $V$ as column vectors in $\K^{\dim V}$ with respect to a fixed basis, and if $\rho:\sv_n\rightarrow\End(V)$ is a representation then we will think of the linear transformations $\rho(h)$, $h\in\sv_n$, as matrices with respect to that basis. We will abuse notation and define representations by writing the generators directly as matrices, dropping the notation $\rho$.


The categories $\sv_n$-mod and $\sBr_n$-mod are Krull-Schmidt, that is, any finite-dimensional $\sv_n$- or $\sBr_n$-representation has a unique decomposition as a direct sum of indecomposable representations up to isomorphism and permuting the factors. However, $\sv_n$-mod and $\sBr_n$-mod are not semisimple, that is, not every finite-dimensional $\sv_n$- or $\sBr_n$-representation can be decomposed into a direct sum of irreducible representations. In order to determine when a representation $V$ of an algebra $A$ is indecomposable, we will use the criterion that $V$ is indecomposable if and only if $\End_A(V)$ is a local ring. In particular, if $\End_A(V)\cong \K$ then certainly $V$ is indecomposable (but not necessarily irreducible).


\subsection{Quotient maps and inflated representations}

It follows immediately from Definition \ref{def svn} and Theorem \ref{PBW} that the degenerate affine Hecke algebra is a quotient of the degenerate affine periplectic Brauer algebra:
\begin{lemma}\label{Hdeg quotient}
There is a surjective homomorphism of algebras $$\Phi_n:\sv_n\twoheadrightarrow\Hdeg_n$$ given by quotienting $\sv_n$ by the two-sided ideal $(e_1,e_2,\dots,e_{n-1})$.
\end{lemma}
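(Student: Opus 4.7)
My plan is to construct the map $\Phi_n$ explicitly via the universal property of $\sv_n$, then use the PBW theorem on both sides to pin down the kernel.

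First I would define a candidate homomorphism $\psi_n : \sv_n \to \Hdeg_n$ by sending $s_i \mapsto s_i$, $y_j \mapsto y_j$, and $e_i \mapsto 0$, and verify well-definedness by running through the relations $(\vv_1)$--$(\vv_8)$ of Definition \ref{def svn}. Most relations become trivial ($0=0$) upon setting the $e_i$'s to zero, namely $(\vv_2)$(i)--(iii), $(\vv_4)$, $(\vv_6)$, $(\vv_8)$, and the parts of $(\vv_5)$ involving $e_i$'s. Relations $(\vv_1)$, $(\vv_3)$, and the $y_iy_j=y_jy_i$ part of $(\vv_2)$(iv) are immediate since they coincide with the defining relations of $\Hdeg_n$. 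The genuinely informative checks are $(\vv_7)$(i) and (ii): after setting $e_i=0$, they become $s_iy_i - y_{i+1}s_i = -1$ and $y_is_i - s_iy_{i+1} = -1$, which (using $s_i^2=1$) are both equivalent to $y_{i+1} = s_iy_is_i + s_i$, the Jucys--Murphy recursion of $\Hdeg_n$. So $\psi_n$ is a well-defined algebra homomorphism.

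Next, $\psi_n$ is clearly surjective since its image contains the generators $s_i$ and $y_j$ of $\Hdeg_n$. By construction $\psi_n$ sends every $e_i$ to zero, hence vanishes on the two-sided ideal $I := (e_1, \dots, e_{n-1}) \trianglelefteq \sv_n$, and so factors through a surjection $\bar\psi_n : \sv_n/I \twoheadrightarrow \Hdeg_n$. It remains to show $\bar\psi_n$ is injective; equivalently, $\ker\psi_n = I$.

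The key step is to compare PBW bases. By Theorem \ref{PBW}, $\sv_n$ has a basis of normal diagrams, each of which is a Brauer diagram decorated with beads pushed to fixed endpoints. A normal diagram lies in $I$ whenever it contains at least one cup or cap, since any such diagram factors through some $e_i$ in the diagram algebra (after reducing to standard form using the cup/cap relations in $\sv_n$, no new diagrams with strictly fewer cups/caps appear). Consequently $\sv_n/I$ is spanned by images of normal diagrams whose underlying Brauer diagram has no cups or caps, i.e.\ permutation diagrams with beaded through-strands; these images are parametrized by a monomial $y_1^{a_1}\cdots y_n^{a_n}$ times a permutation $w \in S_n$. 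But an analogous PBW-type result for $\Hdeg_n$ (the standard one, originally due to Drinfeld/Lusztig) gives exactly this set as a $\K$-basis of $\Hdeg_n$. Since $\bar\psi_n$ sends this spanning set of $\sv_n/I$ bijectively to this basis of $\Hdeg_n$, it must be injective, hence an isomorphism. The main obstacle is justifying cleanly that every normal diagram with at least one cup or cap lies in $I$; this is essentially a bookkeeping argument using the disentangling relations of Lemma \ref{disentangling} to move beads past an $e_i$ before rewriting.
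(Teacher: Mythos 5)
Your proposal is correct and follows exactly the route the paper intends: the paper gives no explicit proof, remarking only that the lemma "follows immediately" from Definition \ref{def svn} (the relations become those of $\Hdeg_n$ upon setting $e_i=0$) and Theorem \ref{PBW} (which identifies the quotient's spanning set of beaded permutation diagrams with the PBW basis of $\Hdeg_n$). You have simply written out the details of that same argument.
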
 
\noindent This allows us to construct a pile of representations of $\sv_n$ for free by ``inflation":
\begin{definition}
Let $\overline{\MM}$ be an $\Hdeg_n$-representation. Define an $\sv_n$ representation $\MM:=\Infl\overline{\MM}$ by precomposing the $\Hdeg$-action on $\overline{\MM}$ with the map $\Phi$. That is, $\MM$ has the same underlying $\K$-vector space as $\overline{\MM}$ with $s_i$ and $y_j$ acting the same on $\MM$ as on $\overline{\MM}$, and $e_i$ acting by $0$.
\end{definition}
\noindent In particular, any irreducible representation of $\Hdeg_n$ gives rise to an irreducible representation of $\sv_n$ by inflation.

In addition to the surjection $\Phi_n$, $\sv_n$ also comes equipped with a surjection to $\sBr_n$.\begin{definition}\label{def JM} The $j$'th Jucys--Murphy element $Y_j$ of $\sBr_n$ is defined inductively by $Y_1:=0$, $Y_{j+1}:=s_jY_js_j+s_j+e_j$ for $1\leq j\leq n-1$.
\end{definition}
\begin{lemma}\label{JM eval}\cite{us+} There is a surjective algebra homomorphism
$$\Pi_n: \sv_n\twoheadrightarrow \sBr_n$$
given by $\Pi_n(e_i)=e_i,$ $\Pi_n(s_i)=s_i$, and $\Pi_n(y_j)=Y_j$, the $j$'th Jucys--Murphy element, for $1\leq i\leq n-1$ and $1\leq j\leq n$.
\end{lemma}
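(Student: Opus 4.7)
The plan is to show that the assignment $\Pi_n(s_i)=s_i$, $\Pi_n(e_i)=e_i$, $\Pi_n(y_j)=Y_j$ extends to a well-defined algebra homomorphism by verifying that the images satisfy all the defining relations $(\vv_1)$--$(\vv_8)$ of $\sv_n$ listed in Definition \ref{def svn}. Surjectivity is then immediate: $\sBr_n$ is generated by the $s_i$ and $e_i$, all of which manifestly lie in the image of $\Pi_n$.

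The relations that involve only the $s_i$'s and $e_i$'s, namely $(\vv_1)$, $(\vv_2)$(i)(ii), $(\vv_3)$(i)(ii), $(\vv_4)$, $(\vv_5)$, and $(\vv_6)$, hold automatically because they are among the defining relations of $\sBr_n$ itself. So the real content is verifying the relations that involve $y_j$, reinterpreted as statements about the Jucys--Murphy elements $Y_j \in \sBr_n$. The first such relation is $(\vv_7)$: by Remark \ref{better reln} it is equivalent to $y_{j+1}=s_jy_js_j+s_j+e_j$, which upon applying $\Pi_n$ becomes exactly the recursive definition of $Y_{j+1}$ in Definition \ref{def JM}. Hence $(\vv_7)$ is built into the definition of the $Y_j$ and poses no obstacle.

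What remains is to check the commutation relations $(\vv_2)$(iii)(iv) and $(\vv_3)$(iii), and the cup-cap relations $(\vv_8)$. I would proceed by induction on $j$ using Definition \ref{def JM}. Concretely: for $(\vv_3)$(iii) (namely $s_i Y_j = Y_j s_i$ whenever $j \neq i, i+1$), express $Y_j$ recursively and commute $s_i$ past each of the pieces $s_{j-1}$, $e_{j-1}$, $Y_{j-1}$ using the braid, far-commutation, and mixed $(\vv_5)$-type relations of $\sBr_n$. The argument for $(\vv_2)$(iii) ($e_i Y_j = Y_j e_i$ in the same range) is analogous, using $(\vv_2)$(i)(ii) and $(\vv_4)$. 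Pairwise commutativity $(\vv_2)$(iv), $Y_iY_j=Y_jY_i$, then follows from these two by a standard induction that reduces to the case $Y_i Y_{i+1} = Y_{i+1} Y_i$, which one verifies directly from the recursive formula and the relations $(\vv_5)$(i), $(\vv_6)$ together with the fact that $Y_i$ commutes with $s_{i-1}, e_{i-1}$ by the induction hypothesis on the lower index. Finally, $(\vv_8)$ reduces, via the packaged form of $(\vv_7)$, to the identities $e_i(Y_i - (s_iY_is_i+s_i+e_i))=-e_i$ and its right-handed analogue; expanding and using $e_i^2=0$, $e_is_i=e_i$, $s_ie_i=-e_i$ from $(\vv_5)$(i) plus the inductively established commutation of $Y_i$ with $e_i$-neighbors, both identities collapse to tautologies.

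The main technical obstacle is organizing the nested induction cleanly: the pairwise commutativity of the $Y_j$'s, the commutativity of $Y_j$ with distant $s_i$ and $e_i$, and the cup-cap relations $(\vv_8)$ are intertwined, and each step relies on the previous ones being available at smaller indices. A clean way to package this is to prove by simultaneous induction on $n$ that all of $(\vv_2)$(iii)(iv), $(\vv_3)$(iii), and $(\vv_8)$ hold for the $Y_j$'s in $\sBr_n$; the inductive step uses only the $\sBr_n$-relations and the recursion $Y_{j+1}=s_jY_js_j+s_j+e_j$. Much of this bookkeeping has already been carried out in \cite{Coulembier} as noted in the introduction, so in practice the proof would cite those verifications and focus on the universal property: the images satisfy all relations, therefore $\Pi_n$ exists; it is surjective by construction.
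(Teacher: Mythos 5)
The paper does not prove this lemma at all --- it is quoted from \cite{us+}, and the relation-checking you outline is precisely the content of that reference (together with \cite{Coulembier} for the properties of the $Y_j$). Your overall strategy is the only reasonable one: the relations involving just $s_i,e_i$ hold tautologically since $\sBr_n$ is by definition the subalgebra of $\sv_n$ they generate, $(\vv_7)$ is the recursion defining $Y_{j+1}$, and surjectivity is immediate. But two of the steps you do spell out are wrong as written. First, $Y_i$ does \emph{not} commute with $s_{i-1}$ and $e_{i-1}$: relations $(\vv_2)$(iii) and $(\vv_3)$(iii) only assert commutation of $y_j$ with $s_i,e_i$ for $i\leq j-2$ or $i\geq j+1$, and already for $i=2$ one computes from $Y_2=s_1+e_1$ that $s_1Y_2-Y_2s_1=-2e_1\neq 0$. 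So that ingredient of your argument for $Y_iY_{i+1}=Y_{i+1}Y_i$ is unavailable. The clean route is to prove by induction that $Y_j$ commutes with all of $\sBr_{j-1}$ (the subalgebra on indices $\leq j-2$); since $Y_i\in\sBr_i\subseteq\sBr_{j-1}$ for $i<j$, full pairwise commutativity then follows in one stroke, with no separate adjacent case needed.

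Second, $(\vv_8)$ does not collapse to a tautology. Expanding $e_i(Y_i-Y_{i+1})$ with $Y_{i+1}=s_iY_is_i+s_i+e_i$ and using $e_is_i=e_i$, $e_i^2=0$ leaves $e_iY_i-e_iY_is_i-e_i$, so $(\vv_8)$(i) is equivalent to the identity $e_iY_is_i=e_iY_i$, and symmetrically $(\vv_8)$(ii) is equivalent to $s_iY_ie_i=-Y_ie_i$. These are exactly the specializations to the Jucys--Murphy elements of Lemma \ref{disentangling}(ii),(iv), and they need their own induction on $i$ (base case $Y_1=0$), using the recursion for $Y_i$ together with the mixed relations $(\vv_5)$(ii)--(v) and the braid relations to absorb the crossing into the cap. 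None of the ingredients you invoke ($e_i^2=0$, $e_is_i=e_i$, $s_ie_i=-e_i$, commutation of $Y_i$ with distant generators) delivers these identities, so as it stands the verification of $(\vv_8)$ --- arguably the only genuinely periplectic-specific relation in the whole check --- is missing. Deferring to \cite{us+} or \cite{Coulembier} for it is legitimate (the paper itself does exactly that), but then the explicit ``collapse to tautologies'' claim should be withdrawn.
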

\noindent The kernel of $\Pi_n$ is the two-sided ideal $(y_1)$. Any irreducible representation $\LL$ of $\sBr_n$ also gives rise by inflation to an irreducible representation $\LL=\Infl\LL$ of $\sv_n$ by declaring $y_1$ to act by $0$, thus extending the action from $\sBr_n$ to all of $\sv_n$ by making $y_j$ act by the $j$'th Jucys--Murphy element $Y_j=\Pi_n(y_j)$.

The algebra $\sBr_n$ not only contains $\K S_n$ as the subalgebra generated by $s_1,\dots,s_{n-1}$ but also has $\K S_n$ as a quotient by the two-sided ideal $(e_1)=(e_1,\dots,e_{n-1})$. The irreducible representations of $\K S_n$ are labeled by partitions of $n$. Using the same method of inflating representations along the quotient map then yields, for every partition $\lambda$ of $n$, an irreducible $\sBr_n$-representation $\LL(\lambda)=\Infl S(\lambda)$ where $S(\lambda)$ is the irreducible $\K S_n$-representation labeled by $\lambda$. By construction, every $e_i$ acts on $\LL(\lambda)$ by $0$. The module category $\sBr_n$-mod thus contains $\K S_n$-mod as the full subcategory generated by all the $\LL(\lambda)$ for $\lambda$ a partition of $n$.

\subsection{Filtration by cup-cap ideals}\label{cup-cap ideals} There are filtrations of $\sBr_n$ and $\sv_n$ by two-sided ideals consisting of diagrams with ``at least $k$ cup-caps," equivalently, ``at most $n-2k$ through strands." The cup-cap filtration organizes the parametrization of irreducible $\sBr_n$-representations by partitions of $n$, partitions of $n-2$, partitions of $n-4, \ldots$ (except that the unique partition $\emptyset$ of $0$ does not label an irreducible representation) \cite{Coulembier}. The irreducibles labeled by partitions of $n$ have $e_i$ acting by $0$ (as we just saw), the ones labeled by partitions of $n-2$ have $e_i$ acting nontrivially but $e_{i_1}e_{i_2}$ acting by $0$ for $|i_1-i_2|>1$, and so on. We expect the cup-cap filtration to play a similar role in the description of irreducible $\sv_n$-representations.

If we do not say otherwise, ``ideal" will always mean two-sided ideal.

\begin{lemma}\label{cupcap ideal 1} Consider $e_i$ for an arbitrary $i\in\{2,\dots,n-1\}$.  The following statements are true:
\begin{enumerate}
\item The ideal $(e_1)$ is equal to the ideal $(e_i)$ for any $1<i\leq n-1$, and moreover, this is true in the subalgebra $\TL^-_n$ generated by $e_1,\dots,e_{n-1}$, and $1$;
\item Let $\MM\in\sv_n-$mod, then $e_i$ acts by $0$ on $\MM$ if and only if $e_1$ acts by $0$ on $\MM$;
\item\label{symm subrep} Let $\MM\in\sBr_n-$mod and suppose $\LL\subset\MM$ is a subspace preserved by $\K S_n$. Then either no $e_i$ preserves $\LL$, or $\LL$ is an $\sBr_n$-subrepresentation of $\MM$.
\end{enumerate}
\end{lemma}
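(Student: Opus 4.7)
My first observation for Part (1) is that relations $(\vv_4)$(i) and $(\vv_4)$(ii) already produce both inclusions between adjacent generators inside $\TL^-_n$ (and hence inside $\sv_n$). From $e_{i+1}e_ie_{i+1}=-e_{i+1}$ one reads off $e_{i+1}\in(e_i)$, and from $e_ie_{i+1}e_i=-e_i$ one gets $e_i\in(e_{i+1})$. Chaining these equalities gives $(e_1)=(e_i)$ for every $1\leq i\leq n-1$, in both $\TL^-_n$ and $\sv_n$. Part (2) then falls out immediately: if $e_1$ annihilates $\MM$ the full ideal $(e_1)$ does, and by Part (1) this ideal contains every $e_i$; the converse is symmetric.

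For Part (3) the strategy is to show that each $e_j$ can be written as $\sigma\,e_i\,\tau$ with $\sigma,\tau\in\K S_n$, so that for $v\in\LL$ one has $e_jv=\sigma\,e_i\,\tau v\in\sigma\,e_i\,\LL\subset\sigma\,\LL\subset\LL$ by successively using $\K S_n$-stability of $\LL$, the hypothesis $e_i\LL\subset\LL$, and $\K S_n$-stability again. The identity I would aim for is
\[
e_{i+1} \;=\; (s_is_{i+1})\,e_i\,(s_{i+1}s_i),
\]
together with its mirror $e_{i-1}=(s_is_{i-1})\,e_i\,(s_{i-1}s_i)$. To derive the first, I would multiply $(\vv_5)$(ii) on the left by $s_i$ to obtain $s_is_{i+1}e_i=e_{i+1}e_i$, and multiply $(\vv_5)$(v) on the right by $s_i$ to obtain $e_is_{i+1}s_i=-e_ie_{i+1}$. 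Plugging both into the right-hand side of the displayed identity and invoking $(\vv_4)$(i) yields
\[
(s_is_{i+1})\,e_i\,(s_{i+1}s_i) \;=\; e_{i+1}\bigl(e_is_{i+1}s_i\bigr) \;=\; -e_{i+1}e_ie_{i+1} \;=\; e_{i+1}.
\]
Iterating outward from $i$ then delivers a sandwich expression for every $e_j$ and finishes Part (3).

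The main obstacle is that Part (1) alone is not strong enough for Part (3): its proof expresses $e_{i+1}$ in terms of other $e_k$'s, which are not a priori known to preserve $\LL$, so one really needs an expression with $\K S_n$-elements (and not $e$-elements) on the outside. The conjugation formula works thanks to the peculiar sign interactions between $(\vv_5)$(ii),(v) and $(\vv_4)$(i), and the only point requiring care is the bookkeeping of signs in those periplectic relations; once the identity is in hand, the rest of the argument is essentially formal.
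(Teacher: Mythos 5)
Your proposal is correct and takes essentially the same route as the paper: Part (1) via the two-sided ideal inclusions coming from $(\vv_4)$ and then chaining, and Part (3) via the conjugation identity $e_{i+1}=(s_is_{i+1})\,e_i\,(s_{i+1}s_i)$, which is exactly the formula the paper derives (it obtains $s_{k-1}e_ks_{k-1}=s_ke_{k-1}s_k$ from $(\vv_5)$(iii),(iv) and $(\vv_4)$(ii), whereas you reach the same identity from $(\vv_5)$(ii),(v) and $(\vv_4)$(i) — a cosmetic difference). Your observation that Part (1) alone is insufficient for Part (3) because one needs $\K S_n$-elements rather than $e$-elements on the outside is exactly the right point and matches the paper's reasoning.
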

\begin{proof}
\begin{enumerate}
\item 
Let $1<i\leq n-1$. We have 
$$e_i=-e_ie_{i-1}e_i=e_i\left(e_{i-1}e_{i-2}e_{i-1}\right)e_i=\ldots=(-1)^{i-1}e_ie_{i-1}e_{i-2}\cdots e_2e_1e_2\cdots e_{i-2}e_{i-1}e_i.$$
and
$$e_1=-e_1e_2e_1=e_1e_2e_3e_2e_1=\ldots=(-1)^{i-1}e_1e_2\cdots e_{i-2}e_{i-1}e_ie_{i-1}e_{i-2}\cdots e_2e_1.$$
\item This follows from the first part: if the generator of a principal ideal annihilates a module then clearly so does every element in that ideal.
\item 
 Using relations $(\sv_5)$ in Definition \ref{def svn} shows that for all $k=2,\dots,n-1$:
$$s_{k-1}e_ks_{k-1}=s_{k-1}e_ke_{k-1}s_k=-s_ke_{k-1}e_ke_{k-1}s_k=s_ke_{k-1}s_k,$$
 so for any $i\neq j$, $e_i=we_jw^{-1}$ for some $w\in S_n$. It follows that if some $e_i$ preserves an $S_n$-subrepresentation $\LL$ then all of $\sBr_n$ does.
\end{enumerate}
\end{proof}
\noindent Consequently, the ideal $\Ker\Phi_n$ is a principal ideal and any $e_i$, $i\in\{1,\dots,n-1\}$, can be taken as its generator. That is to say, we can obtain $\Hdeg_n$ from $\sv_n$ by just setting $e_i=0$ for some $i$.

In fact, the ideal $(e_1)$ is not only a principal ideal but has an interesting filtration by principal ideals. Set $I_0=\sv_n$, $I_1=(e_1)$, $I_2=(e_1e_3)$, $I_3=(e_1e_3e_5),\dots,$ $I_p=(e_1e_3e_5\dots e_{2p-1})$, $I_{p+1}=\{0\}$, where $p=\frac{n}{2}$ if $n$ is even and $p=\frac{n-1}{2}$ if $n$ is odd.
\begin{lemma}
\begin{enumerate}\item The ideal $I_r$ consists of all elements of $\sv_n$ whose diagrams contain at most $n-2r$ through strands. It can be generated by any element of the form $e_{i_1}e_{i_2}\cdots e_{i_r}$ such that $|i_j-a_k|>1$ for all $j,k=1,\dots,r$.
\item There is a filtration of $\sv_n$ given by $$0=I_{p+1}\subsetneq I_p=(e_1e_3\cdots e_{2p-1})\subsetneq I_{p-1}=(e_1e_3\cdots e_{2p-3})\subsetneq \ldots\subsetneq I_1=(e_1)\subsetneq I_0=\sv_n.$$
\end{enumerate}
\end{lemma}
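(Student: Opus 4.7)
The plan is to prove part (1) first and obtain part (2) as an easy consequence. By Theorem \ref{PBW}, every element of $\sv_n$ is a $\K$-linear combination of normal diagrams, and the polynomial factors $y_j$ sitting on individual strands (via Lemma \ref{disentangling}) do not alter the underlying Brauer skeleton. Throughout, I can therefore reason with Brauer diagrams and track only the number of through strands.

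For the ``easy'' containment in part (1), I will show that any element of $(e_{i_1}\cdots e_{i_r})$ is a linear combination of diagrams with at most $n-2r$ through strands. Concatenation of Brauer diagrams never increases the number of through strands: a cup or cap on the exposed boundary of a factor persists in the product, while any interior closed loop annihilates the whole diagram. Under the non-adjacency hypothesis $|i_j-i_k|>1$ for $j\neq k$, the generator $e_{i_1}\cdots e_{i_r}$ is the Brauer diagram with $r$ disjoint cup-caps at the specified positions and exactly $n-2r$ vertical through strands, so the claim follows.

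For the reverse containment, the plan has two parts. First, show that the principal ideal $(e_{i_1}\cdots e_{i_r})$ is independent of the choice of non-adjacent indices. The conjugation trick in the proof of Lemma \ref{cupcap ideal 1}(3) generalizes: given two non-adjacent tuples $(i_1,\dots,i_r)$ and $(j_1,\dots,j_r)$, choose $w\in S_n$ with $w(i_k)=j_k$ and $w(i_k+1)=j_k+1$ for every $k$ -- such a $w$ exists because the complements $\{1,\dots,n\}\setminus\bigcup_k\{i_k,i_k+1\}$ and $\{1,\dots,n\}\setminus\bigcup_k\{j_k,j_k+1\}$ have equal cardinality $n-2r$ -- and conjugation gives $w(e_{i_1}\cdots e_{i_r})w^{-1}=e_{j_1}\cdots e_{j_r}$ as Brauer diagrams. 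Second, for any normal Brauer diagram $D$ with $n-2k$ through strands, $k\geq r$, I want to write $D=\alpha\cdot(e_1 e_3\cdots e_{2k-1})\cdot\beta$ for some $\alpha,\beta\in\K S_n$: choose $\beta$ so that the $k$ caps at the bottom of $D$ land on the standard pairs $\{1,2\},\{3,4\},\dots,\{2k-1,2k\}$ and the bottom through-strand endpoints land on $\{2k+1,\dots,n\}$, choose $\alpha$ analogously for the top of $D$, and absorb any residual permutation of the through strands into $\alpha$. Since $e_1 e_3\cdots e_{2k-1}=(e_1 e_3\cdots e_{2r-1})(e_{2r+1}\cdots e_{2k-1})$ lies in $(e_1 e_3\cdots e_{2r-1})$, so does $D$, and by the first step this ideal equals $(e_{i_1}\cdots e_{i_r})$.

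Part (2) then follows immediately from part (1): each $I_r$ is the set of elements whose Brauer diagrams have at most $n-2r$ through strands, and the chain is strictly decreasing because the generator $e_1 e_3\cdots e_{2r-3}$ of $I_{r-1}$ has $n-2(r-1)>n-2r$ through strands and so cannot lie in $I_r$. The chain terminates at $I_{p+1}=\{0\}$ since no Brauer diagram on $n$ strands can carry more than $p=\lfloor n/2\rfloor$ cup-caps. The main obstacle I anticipate is formalizing the explicit sandwich decomposition $D=\alpha\cdot(e_1 e_3\cdots e_{2k-1})\cdot\beta$: producing $\alpha$ and $\beta$ from the combinatorial data of $D$ requires careful bookkeeping to ensure that no two distinct cups (or caps) of $D$ are pushed onto the same pair of indices, which would create a closed loop and annihilate the product. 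An induction on $k$, using the $S_n$-conjugation invariance of the ideal from the first step as the inductive tool, should keep this manageable.
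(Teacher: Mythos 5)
Your overall route is sound and, for the generation statement, somewhat more explicit than the paper's (which invokes $e_i=-e_ie_{i\pm1}e_i$ as in Lemma \ref{cupcap ideal 1} rather than a conjugation-plus-sandwich factorization $D=\alpha\cdot(e_1e_3\cdots e_{2k-1})\cdot\beta$ with $\alpha,\beta\in\K S_n$; both work). However, there is one genuine gap, and it sits exactly where the paper's proof concentrates its effort: your opening claim that ``the polynomial factors $y_j$ sitting on individual strands do not alter the underlying Brauer skeleton'' is false. Relation $(\vv_7)$, $s_iy_i-y_{i+1}s_i=-e_i-1$, says that pushing a bead through a crossing produces correction terms whose skeletons differ from the original: one term replaces the crossing by a cup-cap, another deletes the crossing altogether. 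So when you take $a\cdot e_{i_1}\cdots e_{i_r}\cdot b$ with $a,b\in\sv_n$ and rewrite it into the normal-diagram basis of Theorem \ref{PBW}, the Brauer skeletons of the resulting terms are \emph{not} simply the concatenations of the skeletons of the factors, and your reduction to ``reason with Brauer diagrams and track only the number of through strands'' is not justified as stated. Without this, the forward containment --- that every element of $(e_{i_1}\cdots e_{i_r})$ has at most $n-2r$ through strands in each term of its normal form --- is unproved.

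The fix is exactly the content of the paper's proof: one checks relation by relation that every rewriting step used to reach normal form either preserves the number of cup-caps or produces a lower-order term with \emph{one more} cup-cap (equivalently, never increases the number of through strands). Concretely, $(\vv_8)$ preserves cup-caps, $(\vv_2)$ and $(\vv_3)$ (far-away commutation) preserve cup-caps, and $(\vv_7)$ preserves cup-caps in the leading term while its correction terms have the same or one more cup-cap; closed loops that arise kill their terms. Once this monotonicity is recorded, your concatenation argument gives the forward containment, and the rest of your proof (conjugation-independence of the generator up to sign, the sandwich factorization for beadless diagrams together with absorbing the $y$-monomials of a normal diagram into the ideal, strictness of the inclusions via PBW, and $I_{p+1}=0$) goes through.
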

\begin{proof}
The statement is obvious for $\sBr_n$. For $\sv_n$, the relations $e_i(y_i-y_{i+1})=-e_i$ and $(y_i-y_{i+1})e_i=e_i$ of Definition \ref{def svn} ($\vv_8$) preserve the number of cup-caps in a diagram, while the relation $y_{i+1}=s_iy_is_i+s_i+e_i$ ($\vv_7)$ preserves the number of cup-caps in the leading term and has a lower order term with one more cup-cap. Far away things commuting (Definition \ref{def svn} ($\vv_2$),($\vv_3$)) obviously preserves the number of cup-caps. Using the relations $e_i=-e_ie_{i\pm 1}e_i$ it is clear as in Lemma \ref{cupcap ideal 1} that $e_1e_3\cdots e_{2k-1}$ generates the ideal of diagrams with at most $k$ cup-caps, and that any other collection of $k$ independent cup-caps will also do the job.
\end{proof}


\subsection{The Jacobson radical}\label{J rad} The determination of the Jacobson radical of $\sBr_n$ and $\sv_n$ is an open problem. In this section we establish some obvious lower and upper bounds on the Jacobson radical of $\sv_n$ in terms of the ideals introduced in Section \ref{cup-cap ideals} and we determine $\JJ(\sv_2)$. 

Recall that if $A$ is an associative algebra, the Jacobson radical $\JJ(A)$ is defined to be the intersection of all maximal left ideals of $A$. Well-known facts about it include \cite{EncyclMath}:
\begin{lemma}\label{J} 
\begin{enumerate}
\item\label{J inv} Let $z\in A$, then $z\in\JJ(A)$ if and only if $1-az$ has a left inverse for all $a\in A$;
\item\label{J 2sided} The Jacobson radical $\JJ(A)$ is a two-sided ideal of $A$;
\item \label{J mod} $\JJ\left(A/\JJ(A)\right)=0$;
\item\label{J ss} $A$ is semisimple if and only if $\JJ(A)=0$ and $A$ is artinian;
\item\label{J idem} If $e^2=e$ then $e\notin\JJ(A)$;
\item\label{J surj} If $\phi: A\twoheadrightarrow B$ is a surjective homomorphism of algebras then $\phi(\JJ(A))\subseteq \JJ(B)$;
\item\label{J anni} $\JJ(A)$ acts by $0$ on every irreducible $A$-representation.
\end{enumerate}
\end{lemma}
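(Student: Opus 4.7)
Each item in Lemma \ref{J} is a standard result about the Jacobson radical of an associative algebra, and the plan is to derive them all from the definition $\JJ(A) = \bigcap \{M : M \text{ maximal left ideal of } A\}$, arranged so that later items lean on earlier ones. I would proceed in the order (\ref{J anni}), (\ref{J inv}), (\ref{J idem}), (\ref{J 2sided}), (\ref{J surj}), (\ref{J mod}), (\ref{J ss}).

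First, for (\ref{J anni}): every simple left $A$-module is isomorphic to $A/M$ for some maximal left ideal $M$, and $\mathrm{Ann}(A/M) = \bigcap_{a\in A} a^{-1}M$ is an intersection of maximal left ideals; conversely, each maximal left ideal $M$ is itself the annihilator of $1+M \in A/M$. Hence the intersection of all maximal left ideals coincides with the intersection of annihilators of simple left modules, and both are therefore annihilated by $\JJ(A)$. For (\ref{J inv}): if $z \in \JJ(A)$ and $1-az$ lacked a left inverse, then $A(1-az)$ would sit inside a maximal left ideal $M$, but $z \in M$ would then force $1 = (1-az)+az \in M$, a contradiction; conversely, if $z \notin \JJ(A)$ pick a maximal $M$ with $z \notin M$, write $1 = m + az$ with $m \in M$, and observe $1 - az = m$ cannot have a left inverse. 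Item (\ref{J idem}) is then immediate: if $e = e^2 \in \JJ(A)$ and $u(1-e) = 1$, then right-multiplying by $e$ gives $e = u(1-e)e = u\cdot 0 = 0$, so no nonzero idempotent lies in $\JJ(A)$.

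For (\ref{J 2sided}), that $bz \in \JJ(A)$ whenever $z \in \JJ(A)$ and $b \in A$ is immediate from (\ref{J inv}) since $1 - a(bz) = 1 - (ab)z$; the other side requires the right-handed analogue of (\ref{J inv}), proved by running the same argument with maximal right ideals, which shows $\JJ(A)$ is also the intersection of maximal right ideals and hence two-sided. For (\ref{J surj}): given $z \in \JJ(A)$ and $b \in B$, use surjectivity to write $b = \phi(a')$ and push a left inverse $u$ of $1 - a'z$ through $\phi$ to produce a left inverse $\phi(u)$ of $1 - b\phi(z)$, so $\phi(z) \in \JJ(B)$ by (\ref{J inv}). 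For (\ref{J mod}): elements of $\JJ(A)$ are quasi-regular (from (\ref{J inv}) together with its right-handed version, $1+r$ has a two-sided inverse for $r \in \JJ(A)$), so any left inverse modulo $\JJ(A)$ of $1-az$ lifts to an honest left inverse in $A$; applied to a preimage $z \in A$ of any $\bar z \in \JJ(A/\JJ(A))$, this forces $z \in \JJ(A)$, i.e.\ $\bar z = 0$. Finally (\ref{J ss}) is classical Artin--Wedderburn: artinian together with $\JJ(A) = 0$ makes $A$ a finite product of matrix algebras over division rings, hence semisimple, and the converse is standard.

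The one subtle step is the left-right symmetry required in (\ref{J 2sided}), since it is not visible from the one-sided definition and must be obtained by independently proving the right-handed version of (\ref{J inv}). Every other item is a one-line application of (\ref{J inv}) or a citation of Artin--Wedderburn, so I expect no further obstacles.
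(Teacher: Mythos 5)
The paper does not actually prove Lemma~\ref{J}; it simply records these as well-known facts and cites an encyclopedia entry, so there is no in-paper proof to compare against. Your proposal supplies the standard textbook proofs, and they are essentially correct. A few small comments. On item (\ref{J idem}), the statement as written is false for $e=0$; you correctly rephrase the conclusion as ``no nonzero idempotent lies in $\JJ(A)$,'' which is what the paper actually uses. On item (\ref{J 2sided}), your sketch via ``the right-handed analogue of (\ref{J inv})'' is the one step that, as written, does not close: running the same argument with right ideals produces a statement about the \emph{right} radical $\bigcap\{\text{maximal right ideals}\}$, and one still has to show this coincides with the left radical. The missing link is the observation that $1-az$ has a two-sided inverse if and only if $1-za$ does (via $u\mapsto 1+zua$), which converts the left criterion of (\ref{J inv}) into the symmetric right criterion. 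You flag this as ``the one subtle step'' but the justification offered is circular as stated. Notably, you could have avoided this entirely: in your proof of (\ref{J anni}) you already show $\JJ(A)=\bigcap_M \mathrm{Ann}(A/M)$, and each $\mathrm{Ann}(A/M)$ is automatically a two-sided ideal, so (\ref{J 2sided}) follows immediately from (\ref{J anni}) with no left-right symmetry argument needed. With that shortcut (or with the $1+zua$ trick spelled out), the proposal is complete.
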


Let $\JJ(\sBr_n)$ be the Jacobson radical of $\sBr_n$ and let $\JJ(\sv_n)$ be the Jacobson radical of $\sv_n$. Lemma \ref{JM eval} provides a surjective algebra homomorphism $\Pi_n:\sv_n\twoheadrightarrow\sBr_n$, so by Lemma \ref{J}(\ref{J surj}) we know that $\Pi_n(\JJ(\sv_n))\subseteq \JJ(\sBr_n)$.  The algebra $\sBr_n$ is finite-dimensional and non-semisimple \cite{Moon}, so by Lemma \ref{J}(\ref{J ss}) it holds that $\JJ(\sBr_n)\neq 0$ \cite{Coulembier}. For very small $n$ it is possible to calculate $\JJ(\sBr_n)$ by hand: when $n=2$, $\sBr_2$ is only $3$-dimensional with $\K$-basis $\{1,s,e\}$, and $\JJ(\sBr_2)=\K e$. When $n=3$, $\sBr_3$ is $15$-dimensional. Moon calculated $\JJ(\sBr_3)$ and found that it is $5$-dimensional; consequently, the maximal semisimple quotient of $\sBr_3$ has dimension $\dim_\K\sBr_3/\JJ(\sBr_3)=10.$
For any $n>3$, Coulembier defined a central element $\Theta$ given by a polynomial in the Jucys--Murphy elements $Y_2,\dots,Y_n$ and he proved that  $\Theta\in \JJ(\sBr_n)$ \cite[Equation (6.3)]{Coulembier}:
$$\Theta=\prod_{2\leq i<j\leq n}(1-(Y_i-Y_j)^2)=(1-(Y_2-Y_3)^2)(1-(Y_2-Y_4)^2)\cdots (1-(Y_{n-1}-Y_n)^2).$$
 However, it is unknown if $\Theta$ is nonzero for general $n$, see the remark preceding \cite[Proposition 6.4.3]{Coulembier}.

We could not find the next lemma in the literature, so we include a proof.
\begin{lemma}\label{dunno}
Let $\JJ(\Hdeg_n)$ denote the Jacobson radical of $\Hdeg_n$. Then $\JJ(\Hdeg_n)=0$.
\end{lemma}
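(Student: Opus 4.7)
The plan is to exhibit a separating family of simple $\Hdeg_n$-modules. By Lemma \ref{J}(\ref{J anni}), any $z \in \JJ(\Hdeg_n)$ annihilates every simple $\Hdeg_n$-module, so it suffices to produce simple modules whose intersection of annihilators is zero. The natural candidates are the principal series modules, and the key input is the classical generic irreducibility of principal series for the degenerate affine Hecke algebra.

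I would first invoke the PBW theorem for $\Hdeg_n$, the easier, well-known analogue of Theorem \ref{PBW}: $\Hdeg_n$ is free as a right $\K[y_1,\ldots,y_n]$-module with basis $S_n$. For each $\mathbf{a} = (a_1,\ldots,a_n) \in \K^n$, let $\chi_\mathbf{a}\colon \K[y_1,\ldots,y_n] \to \K$ send $y_i$ to $a_i$ and form the induced module
$$M_\mathbf{a} := \Hdeg_n \otimes_{\K[y_1,\ldots,y_n]} \K_{\chi_\mathbf{a}}.$$
By PBW, $M_\mathbf{a}$ is $n!$-dimensional with $\K$-basis $\{w \otimes 1\}_{w \in S_n}$, and $y_i$ acts on $1 \otimes 1$ by the scalar $a_i$. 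On the Zariski-dense open subset $U \subseteq \K^n$ defined by $a_i - a_j \neq \pm 1$ for $i < j$, the module $M_\mathbf{a}$ is irreducible; this is the standard generic irreducibility of principal series, proved by diagonalizing $y_1,\ldots,y_n$ on $M_\mathbf{a}$ (possible since the weights $w\cdot \mathbf{a}$ are pairwise distinct for generic $\mathbf{a}$) and producing intertwiners built from $s_i$ and $(y_i - y_{i+1})^{-1}$ that permute the $y$-weight spaces, forcing any $\K[y]$-stable submodule to exhaust $M_\mathbf{a}$.

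With generic irreducibility in hand the conclusion is short. Write $z \in \JJ(\Hdeg_n)$ in the PBW basis as $z = \sum_{w \in S_n} w \cdot q_w(y_1,\ldots,y_n)$. Since $z$ annihilates the simple module $M_\mathbf{a}$ for each $\mathbf{a} \in U$,
$$0 \;=\; z \cdot (1 \otimes 1) \;=\; \sum_{w \in S_n} q_w(\mathbf{a})\,(w \otimes 1) \qquad \text{in } M_\mathbf{a}.$$
Linear independence of $\{w \otimes 1\}_{w \in S_n}$ forces $q_w(\mathbf{a}) = 0$ for every $w$ and every $\mathbf{a} \in U$, and Zariski density of $U$ in $\K^n$ then forces each polynomial $q_w$ to vanish identically. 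Hence $z = 0$ and $\JJ(\Hdeg_n) = 0$. The main obstacle is the generic irreducibility of $M_\mathbf{a}$ itself: while classical, a fully self-contained verification requires the intertwiner analysis indicated above, so in practice I would either cite a standard reference or relegate that step to a short lemma.
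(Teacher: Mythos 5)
Your argument is correct, but it is organized differently from the paper's. The paper does not construct explicit simple modules at all: it writes $h\in\JJ(\Hdeg_n)$ as a polynomial in $y_1$ with coefficients in $\K S_n$, passes to the finite-dimensional quotient $\mathrm{H}_{a_1,\dots,a_\ell}=\Hdeg_n/\bigl((y_1-a_1)\cdots(y_1-a_\ell)\bigr)$, asserts that this quotient is semisimple for generic $(a_1,\dots,a_\ell)$ and has basis the PBW monomials of degree $<\ell$, and then combines Lemma \ref{J}(\ref{J surj}) (surjections carry the radical into the radical) with the observation that for $\ell$ larger than the degree of $h$ the quotient map does not move $h$. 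You instead use the other standard characterization, Lemma \ref{J}(\ref{J anni}): you build the $n!$-dimensional principal series modules $M_{\mathbf a}$, invoke their generic irreducibility, expand $z$ in the PBW basis as $\sum_w w\,q_w(y)$, and kill each coefficient $q_w$ by evaluating on the cyclic vector and letting $\mathbf a$ range over a Zariski-dense set. The two proofs lean on essentially the same unproved classical input in different clothing -- the generic semisimplicity of the finite quotient in the paper is equivalent to the generic irreducibility of the principal series in yours, since the semisimple quotient decomposes into matrix algebras acting on exactly those modules -- so neither is more self-contained on that point; your endgame (linear independence of $\{w\otimes 1\}$ plus Zariski density of $U$) is arguably more explicit than the paper's degree bookkeeping. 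One small imprecision worth fixing: your set $U=\{a_i-a_j\neq\pm1\}$ does not exclude $a_i=a_j$, so the parenthetical justification of irreducibility by "diagonalizing the $y_i$ since the weights $w\cdot\mathbf a$ are pairwise distinct" does not apply on all of $U$; either shrink $U$ by also imposing $a_i\neq a_j$ (still Zariski-dense, and all you need) or cite Kato's irreducibility criterion, which gives irreducibility on your $U$ as stated.
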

\begin{proof}
Let $h\in \JJ(\Hdeg_n)$ and using the relation $y_{i+1}=s_iy_is_i+s_i$, write $h$ as a polynomial in $y_1$ with coefficients in $\K[S_n]$. As a polynomial in $y_1$, $h$ has the same degree as it does as a polynomial in $y_1,\dots,y_n$. Let $\mathrm{H}_{a_1,\dots,a_\ell}$ be the finite-dimensional quotient of $\Hdeg_n$ by the two-sided ideal generated by $(y_1-a_1)(y_1-a_2)\cdots (y_1-a_\ell)$ where $(a_1,\dots,a_\ell)\in\K^\ell$. 
For generic $(a_1,\dots,a_\ell)$, the algebra $\mathrm{H}_{a_1,\dots,a_\ell}$ is semisimple and has basis given by the basis elements of $\Hdeg_n$ up to degree $\ell-1$. Let $\pi:\Hdeg_n\rightarrow \mathrm{H}_{a_1,\dots,a_\ell}$ be the quotient map, then $\pi(J(\Hdeg_n))\subseteq \JJ(\mathrm{H}_{a_1,\dots,a_\ell})=0$ and so $\pi(h)=0$ for any generic $\ell$-tuple of complex numbers $(a_1,\dots,a_\ell)$. 
 Taking $\ell$ to be bigger than the degree of $h$ we have $\pi(h)=h$, but for a generic $\ell$-tuple of parameters $(a_1,\dots,a_\ell)$ we have $\pi(h)=0$, so $h=0$.
\end{proof}
\noindent Note that the algebra $\Hdeg_n$, however, still fails to be semisimple as it is not artinian, and its representation theory contains plenty of examples of non-semisimple modules.

\begin{lemma}\label{J rad sv2}
Suppose $n=2$. Then $\JJ(\sv_2)=(e_1)$. 
\end{lemma}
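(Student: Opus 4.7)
I would prove the two inclusions $(e_1) \subseteq \JJ(\sv_2)$ and $\JJ(\sv_2) \subseteq (e_1)$ separately. The first is a direct computation showing that $(e_1)$ is a nilpotent ideal (in fact that it squares to zero), and the second uses the surjection $\Phi_2 : \sv_2 \twoheadrightarrow \Hdeg_2$ together with Lemma \ref{dunno}.

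\textbf{Step 1: $(e_1) \subseteq \JJ(\sv_2)$.} My strategy is to verify the stronger statement $e_1 \sv_2 e_1 = 0$, which immediately gives $(e_1)^2 \subseteq \sv_2 \cdot (e_1 \sv_2 e_1) \cdot \sv_2 = 0$. By Theorem \ref{PBW}, every element of $\sv_2$ is a linear combination of normal diagrams, and in this small rank case these have one of three shapes: identity strands with beads, a crossing with beads, or a single cup--cap with beads. Thus it suffices to check that $e_1 \cdot b \cdot e_1 = 0$ on basis vectors of the forms $y_1^a y_2^b$, $y_1^a y_2^b s_1$, and $y_1^a e_1 y_1^b$ (or whichever convention places the beads). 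Using $(\vv_8)$ to rewrite $y_2^b e_1 = (y_1 - 1)^b e_1$, then invoking the identity $e_1 y_1^k e_1 = 0$ recorded in Remark \ref{less relns}, handles the identity case. The crossing case reduces to the identity case via $s_1 e_1 = -e_1$ from $(\vv_5)(\mathrm{i})$. The cup--cap case contains the subword $e_1 y_1^a e_1 = 0$ and so vanishes outright. Once $e_1 \sv_2 e_1 = 0$, every $z \in (e_1)$ and every $a \in \sv_2$ satisfy $(az)^2 \in \sv_2 \cdot e_1 \sv_2 e_1 \cdot \sv_2 = 0$, hence $(1 - az)(1 + az) = 1 - (az)^2 = 1$, and Lemma \ref{J}(\ref{J inv}) gives $z \in \JJ(\sv_2)$.

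\textbf{Step 2: $\JJ(\sv_2) \subseteq (e_1)$.} The plan is to push the Jacobson radical down to $\Hdeg_2$. By Lemma \ref{Hdeg quotient} the kernel of $\Phi_2 : \sv_2 \twoheadrightarrow \Hdeg_2$ is precisely the two-sided ideal $(e_1)$. Lemma \ref{J}(\ref{J surj}) yields $\Phi_2(\JJ(\sv_2)) \subseteq \JJ(\Hdeg_2)$, and Lemma \ref{dunno} tells us $\JJ(\Hdeg_2) = 0$. Therefore $\JJ(\sv_2) \subseteq \ker \Phi_2 = (e_1)$, closing the argument.

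\textbf{Where I expect the friction to be.} The only nontrivial computation is the verification of $e_1 \sv_2 e_1 = 0$, and even this is very short once one has the right set of relations at hand: the combination of $(\vv_8)$, $(\vv_5)(\mathrm{i})$, and the relation $e_1 y_1^k e_1 = 0$ from Remark \ref{less relns} collapses each of the three families of basis elements in one or two moves. Since the heavy lifting for the reverse inclusion is encapsulated in the already-established Lemma \ref{dunno}, I do not expect any genuine obstacle; the proof should be essentially a one-paragraph argument in each direction.
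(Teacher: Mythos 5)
Your proof is correct and follows essentially the same route as the paper's: both establish $\JJ(\sv_2)\subseteq(e_1)$ via $\Phi_2$ and Lemma \ref{dunno}, and both establish $(e_1)\subseteq\JJ(\sv_2)$ by first proving $e_1\sv_2e_1=0$ and then invoking the left-inverse criterion of Lemma \ref{J}(\ref{J inv}). The only difference is in how the key vanishing $e_1 X e_1=0$ is justified: the paper cites Lemma \ref{disentangling} together with an external lemma from \cite{us+}, whereas you give a self-contained PBW-basis case check using $(\vv_8)$, $(\vv_5)(\mathrm{i})$, and the recorded identity $e_1 y_1^k e_1=0$ from Remark \ref{less relns} --- a reasonable and slightly more elementary bookkeeping of the same fact. (Also note that once one knows $e_1\sv_2e_1=0$, it is enough to observe directly, as the paper does, that $(1-Xe_1)(1+Xe_1)=1$ for all $X$, so $e_1\in\JJ(\sv_2)$ and hence $(e_1)\subseteq\JJ(\sv_2)$ because $\JJ$ is an ideal; your detour through $(az)^2=0$ for general $z\in(e_1)$ is correct but slightly more than is needed.)
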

\begin{proof}
Write $e:=e_1$. We have $\JJ(\sv_2)\subseteq\Ker\Phi_2=(e)$ since $\Phi(\JJ(\sv_2))\subseteq \JJ(\Hdeg_2)=0$. On the other hand, Lemma \ref{disentangling} and \cite[Lemma 8]{us+} imply that for any element $X$ of $\sv_2$ it holds that $eXe=0$. Then the element $(1-Xe)$ has left inverse $(1+Xe)$, so $e$ is in the Jacobson radical. Therefore it also holds that $(e)\subseteq \JJ(\sv_2)$ and the statement is proved.
\end{proof}

\begin{lemma}\label{J in e ideal}
Suppose $n>2$. Then $\JJ(\sv_n)\subsetneq (e_1)$.
\end{lemma}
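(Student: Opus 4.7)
The plan is to split the statement into two parts: the containment $\JJ(\sv_n) \subseteq (e_1)$, which is immediate, and the strictness, which requires producing one non-trivial object.

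For the containment, the argument from Lemma \ref{J rad sv2} transfers verbatim. Lemma \ref{J}(\ref{J surj}) applied to the surjection $\Phi_n$ gives $\Phi_n(\JJ(\sv_n)) \subseteq \JJ(\Hdeg_n)$, which vanishes by Lemma \ref{dunno}, and $\Ker \Phi_n = (e_1)$ by Lemma \ref{cupcap ideal 1}(1). So $\JJ(\sv_n) \subseteq (e_1)$ without using any assumption on $n$.

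For strictness, the natural element to test is $e_1$ itself: the goal is to show $e_1 \in (e_1) \setminus \JJ(\sv_n)$. Applying Lemma \ref{J}(\ref{J surj}) to the other surjection $\Pi_n : \sv_n \twoheadrightarrow \sBr_n$, if $e_1$ were in $\JJ(\sv_n)$ then $\Pi_n(e_1) = e_1$ would lie in $\JJ(\sBr_n)$, and Lemma \ref{J}(\ref{J anni}) would force $e_1$ to act by zero on every irreducible $\sBr_n$-representation. So the whole problem reduces to exhibiting a single irreducible $\sBr_n$-representation on which $e_1$ acts nontrivially.

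For $n > 2$ such a representation is readily available. One option is to take an irreducible $\sBr_n$-module arising as the head of a cell module labeled by a nonempty partition of $n - 2k$ with $k \geq 1$, as classified in \cite{Coulembier}: such an irreducible cannot have $e_1$ acting by zero, because by Lemma \ref{cupcap ideal 1}(2) that would force every $e_i$ to act by zero, making the representation factor through $\sBr_n/(e_1) \cong \K S_n$ and contradicting the fact that it is labeled by a partition of $n - 2k < n$. Alternatively, one can preview the explicit $(n-1)$-dimensional irreducible $\CC_0(V_n)$ built in Section \ref{irreduciblecali}, which has nonzero $e_1$-action by construction. The main (and really only) obstacle is sourcing this existence of an irreducible with nontrivial cup-cap action; once one such representation is granted, the rest of the argument is routine Jacobson-radical formalism.
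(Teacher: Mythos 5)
Your containment half, $\JJ(\sv_n)\subseteq\Ker\Phi_n=(e_1)$ via Lemma \ref{J}(\ref{J surj}) and Lemma \ref{dunno}, is exactly the paper's argument. For the strictness, however, you take a genuinely different route, and both work. The paper stays entirely inside the algebra: it shows $e_i\notin\JJ(\sv_n)$ for every $i$ by exhibiting the explicit element $1+e_{i+1}e_i$, which is annihilated on the left by $e_i$ (since $e_ie_{i+1}e_i=-e_i$) and therefore cannot be invertible, contradicting the characterization in Lemma \ref{J}(\ref{J inv}) if $e_i$ lay in the radical; this is two lines, self-contained, and makes visible that $n>2$ is needed only so that a second cup-cap generator exists. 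Your argument instead pushes $e_1$ down to $\sBr_n$ via $\Pi_n$ and Lemma \ref{J}(\ref{J surj}), then invokes Lemma \ref{J}(\ref{J anni}) to reduce everything to producing one irreducible $\sBr_n$-module with nonzero $e_1$-action. That reduction is sound, and both of your sources for such a module are legitimate: the irreducibles of \cite{Coulembier} labeled by nonempty partitions of $n-2$ (which exist precisely when $n>2$ and cannot factor through $\K S_n$ by Lemma \ref{cupcap ideal 1}), or the representation $\CC_0(V_n)$ of Section \ref{irreduciblecali}, whose construction is a direct verification of relations and does not depend on this lemma, so there is no circularity. The trade-off is that your proof imports either an external classification or a forward reference, where the paper's is elementary and local to the defining relations; what your version buys is a transparent explanation of why the statement fails at $n=2$ (there is no irreducible $\sBr_2$-module with nonzero cup-cap action, consistent with Lemma \ref{J rad sv2}).
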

\begin{proof}
We know that $\Phi_n(\JJ(\sv_n))\subseteq \JJ(\Hdeg_n)=0$ by Lemma \ref{dunno}. Therefore $\JJ(\sv_n)\subseteq\Ker\Phi_n=(e_1,\dots,e_n)=(e_1)$. However, $e_i\notin \JJ(\sv_n)$ for all $i=1,\dots,n-2$ since the element $1+e_{i+1}e_i$ is not invertible: multiply $1+e_{i+1}e_i$ on the left by $e_i$ to get $e_i+e_ie_{i+1}e_i=e_i-e_i=0$. Similarly, $e_{n-1}\notin \JJ(\sv_n)$ since $1+e_{n-2}e_{n-1}$ is not invertible (multiply it on the left by $e_{n-1}$ to get $0$).
\end{proof}

\begin{lemma}
Let $n>2$ and suppose $n$ is even. Then $e_1e_3\cdots e_{n-1}\in \JJ(\sv_n)$ and $\JJ(\sBr_n)$.
\end{lemma}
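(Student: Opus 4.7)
Proof proposal:

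The plan is to show that the two-sided ideal $I_p = (e_1 e_3 \cdots e_{n-1})$ satisfies $I_p^2 = 0$ in $\sv_n$ (and hence in $\sBr_n$). Nilpotent two-sided ideals lie in the Jacobson radical: if $x \in I_p$ and $a \in \sv_n$, then $xa \in I_p$ satisfies $(xa)^2 \in I_p^2 = 0$, so $1 - xa$ has two-sided inverse $1 + xa$, and Lemma \ref{J}(\ref{J inv}) gives $x \in \JJ(\sv_n)$. Applied to $x = f := e_1 e_3 \cdots e_{n-1}$, this yields $f \in \JJ(\sv_n)$; the same argument carried out inside $\sBr_n$, using $f \sBr_n f \subseteq f \sv_n f = 0$, yields $f \in \JJ(\sBr_n)$.

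The condition $I_p^2 = 0$ is equivalent to $f X f = 0$ for every $X \in \sv_n$. By linearity and Theorem \ref{PBW}, it suffices to prove this for $X$ a normal diagram. The diagram of $f$ has $n/2$ cups across the top at positions $(1,2), (3,4), \ldots, (n-1, n)$, matching caps across the bottom, no through strands, and no beads. Form the stacked diagram $f X f$: there are two interior horizontal interfaces, and at each of the $2n$ interface vertices the degree is exactly $2$ --- one edge from a cap or cup of the adjacent $f$-factor, and one edge from a strand of $X$. Hence the interface graph is a disjoint union of cycles, and since it has $2n>0$ vertices it contains at least one cycle. Each such cycle bounds a closed loop in the planar diagram $f X f$, possibly carrying some of the beads of $X$.

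It remains to show that any closed loop makes the diagram zero. A beadless closed loop is killed by $e_i^2 = 0$ from $(\vv_6)$. A closed loop with beads can be reduced, using the bead-moving formulas $(\vv_7)$ and $(\vv_8)$ together with Lemma \ref{disentangling}, to a local subexpression of the form $e_i \, p(y_i, y_{i+1}) \, e_i$ for some polynomial $p$. This vanishes: the relation $e_1 y_1^k e_1 = 0$ of Remark \ref{less relns} (and its relabeled analogues $e_i y_i^k e_i = 0$) combines with the identity $e_i y_{i+1} = e_i y_i + e_i$ derived from $(\vv_8)$(i) to give $e_i p(y_i, y_{i+1}) e_i = 0$ for every polynomial $p$. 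Hence $f X f = 0$, completing the proof.

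The main obstacle is the last step. Moving beads through crossings via $(\vv_7)$ produces lower-order correction terms containing extra $e_j$ generators, which can perturb the underlying diagram structure. One must verify that these correction terms are either absorbed into preexisting cup-caps of the surrounding diagram or introduce further closed loops to which the same reduction applies. An induction on the total number of beads in $X$ --- with the bead-free base case handled by the purely $\sBr_n$-diagrammatic loop argument --- deals with this cleanly.
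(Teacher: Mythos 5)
Your strategy is the same as the paper's: prove $fXf=0$ for every $X\in\sv_n$ and conclude via the invertibility criterion of Lemma~\ref{J}(\ref{J inv}). Your degree-two observation that the interior of the stacked normal diagram $fXf$ decomposes into a disjoint union of closed loops is a clean way to justify the paper's terse assertion that $eXe$ ``is a closed loop.''

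However, the crucial step --- that a closed loop carrying beads evaluates to zero --- is not actually carried out, and this is where the real content of the lemma lies. Your claim that such a loop reduces to a local subexpression of the form $e_i\,p(y_i,y_{i+1})\,e_i$ is not valid in general: a loop threading through several cup-caps with crossings between them does not collapse locally to a single sandwich $e_i(\cdot)e_i$, and the lower-order correction terms from ($\vv_7$) genuinely alter the underlying diagram. You flag this yourself as ``the main obstacle'' and offer only the assertion that an induction on bead number ``deals with this cleanly.'' The paper supplies the actual structure of that induction, and it is a double one: downward induction on the total number of beads (base case: any beadless closed loop is $0$ in $\sBr_n$), and, with bead count fixed, downward induction on the number of crossings (base case: \cite[Lemma 8]{us+}), using Lemma~\ref{disentangling} and \cite[Lemmas 4--9]{us+} to produce local moves that strictly decrease one of these counts while every correction term falls under the inductive hypothesis. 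As written, your proposal names the right strategy but leaves the verification that $fXf=0$ --- the entire technical work of the lemma --- unperformed.
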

\begin{proof}
Let $e=e_1e_3\cdots e_{n-1}$. Then $e\in \JJ(\sv_n)$ if and only if $(1-Xe)$ has a left inverse for any $X\in\sv_n$. Observe that $eXe$ is a closed loop without any through strands, possibly with complicated self-crossings and a bunch of beads on it. As in Lemma \ref{J rad sv2}, using the relations of Lemma \ref{disentangling} and \cite[Lemma 4-9]{us+}, one can show that $eXe=0$. To do this precisely, do downward induction on the number of beads on the loop (i.e. the total number of $y$'s appearing in the expression), for which the base case is the statement that any closed loop in $\sBr_n$ is $0$. Then do downward induction on the number of crossings. The base case for that is \cite[Lemma 8]{us+}. If a crossing appears directly below a cap or directly above a cup, use Lemma \ref{disentangling} to eliminate the crossing, then the result is $0$ by induction. If the crossing is in between two caps or two cups, move any beads out of the way if necessary across the curve of the cap or cup. These moves produce terms with less beads that are $0$ by induction. Then apply \cite[Lemma 4(a)]{us+} or its upside-down version to switch the position of the crossing and the cup or cap, then apply braid relations if necessary until the crossing can be canceled either using $s_i^2=1$ or using \cite[Lemma 8]{us+}. Either way, the number of crossings has been reduced and the result is $0$ by induction. Therefore $eXe=0$, and so $(1+Xe)(1-Xe)=1-XeXe=1$ showing that $(1+Xe)$ is a left inverse to $(1-Xe)$ for any $X\in\sv_n$. It follows from Lemma \ref{J} that $e\in \JJ(\sv_n)$. Since $e\in\sBr_n$, it also holds that $e\in \JJ(\sBr_n)$.
\end{proof}

\begin{remark}
The reason $n$ is required to be even is that if $n$ is odd, the corresponding element $e:=e_1e_3\cdots e_{n-2}$ (consisting of as many cup-caps placed side by side as will fit) is definitely not in the Jacobson radical. To see this, consider $s:=s_2s_4\cdots s_{n-1}$. Then $ese=\pm e$, so $sese=\pm se$. In the case $sese=se$, $se$ is idempotent so it can't be in $\JJ(\sv_n)$. In the case $sese=-se$, $-se$ is idempotent so it cannot be in $\JJ(\sv_n)$ or $\JJ(\sBr_n)$. In either case, this means $e$ cannot be in $\JJ(\sv_n)$ or $\JJ(\sBr_n)$ since the Jacobson radical is an ideal. We illustrate the elements involved in the case $n=7$:

$$e:=e_1e_3e_5=\TikZ{[scale=.5] \draw
(0,2) arc(-180:0:.5) 
(0,0) arc(180:0:.5)
(2,2) arc(-180:0:.5) 
(2,0) arc(180:0:.5)
(4,2) arc(-180:0:.5) 
(4,0) arc(180:0:.5)
(6,0)node{} to (6,2)node{};}, \quad s:=s_2s_4s_6=\TikZ{[scale=.5] \draw
(0,0)node{} to (0,2)node{}
(1,0)node{} to (2,2)node{}
(2,0)node{} to (1,2)node{}
(3,0)node{} to (4,2)node{}
(4,0)node{} to (3,2)node{}
(5,0)node{} to (6,2)node{}
(6,0)node{} to (5,2)node{};},
$$
and then
$$ses=\TikZ{[scale=.5] \draw
(0,2) arc(-180:0:.5) 
(0,0) arc(180:0:.5)
(2,2) arc(-180:0:.5) 
(2,0) arc(180:0:.5)
(4,2) arc(-180:0:.5) 
(4,0) arc(180:0:.5)
(6,0)node{} to (6,2)node{}
(0,2)node{} to (0,4)node{}
(1,2)node{} to (2,4)node{}
(2,2)node{} to (1,4)node{}
(3,2)node{} to (4,4)node{}
(4,2)node{} to (3,4)node{}
(5,2)node{} to (6,4)node{}
(6,2)node{} to (5,4)node{}
(0,6) arc(-180:0:.5) 
(0,4) arc(180:0:.5)
(2,6) arc(-180:0:.5) 
(2,4) arc(180:0:.5)
(4,6) arc(-180:0:.5) 
(4,4) arc(180:0:.5)
(6,4)node{} to (6,6)node{};}
=- \TikZ{[scale=.5] \draw
(0,2) arc(-180:0:.5) 
(0,0) arc(180:0:.5)
(2,2) arc(-180:0:.5) 
(2,0) arc(180:0:.5)
(4,2) arc(-180:0:.5) 
(4,0) arc(180:0:.5)
(6,0)node{} to (6,2)node{};},
$$ 
where the tangled strand is just pulled straight (with a sign).

\end{remark}


\section{Calibrated representations and Jucys--Murphy elements}\label{cali reps and JM elts} By Definition \ref{def svn} the variables $y_1,\dots,y_n$ commute, and moreover, $\K[y_1,\dots,y_n]$ is a maximal commutative subalgebra of $\sv_n$ \cite{us+}. Similarly, $Y_1,\dots,Y_n$ generate a commutative subalgebra of $\sBr_n$ \cite{Coulembier}. 
In parallel with the theory of Jucys--Murphy elements of the symmetric group, $Y_{j+1}$ commutes with $\sBr_j\subset \sBr_n$, where $\sBr_j$ is the subalgebra generated by $e_1,\dots, e_{j-1}$ and $s_1,\dots, s_{j-1}$  \cite{Coulembier}. Lifting this relation to the affine level, $y_{j+1}$ commutes with $\sv_{j}\subset \sv_n$ for each $j=1,\dots,n-1$, where $\sv_j$ denotes the subalgebra of $\sv_n$ generated by $e_1,\dots, e_{j-1}$, $s_1,\dots,s_{j-1}$, and $y_1,\dots,y_j$ (this follows from Definition \ref{def svn}($\vv_2$) and ($\vv_3)$). 

One way to get a grip on some representations of $\sv_n$ is to study those representations on which $y_1,\dots,y_n$ act semisimply:
\begin{definition} \begin{enumerate} \item A representation $V$ of $\sv_n$ is called calibrated if $V$ has a basis with respect to which $y_j$ acts on $V$ by a diagonal matrix for all $j=1,\dots, n$. \item A representation $V$ of $\sBr_n$ is called calibrated if $V$ has a basis with respect to which $Y_j$ acts on $V$ by a diagonal matrix for all $j=1,\dots, n$.
\end{enumerate}
\end{definition}
\noindent Calibrated representations of $\sBr_n$ are a special case of calibrated representations of $\sv_n$: a representation $V$ of $\sBr_n$ is calibrated if and only $V$ is a calibrated $\sv_n$-representation on which $y_1$ acts by $0$.

\begin{example}
Every $\sBr_n$-representation of the form $\Infl S(\lambda)$ where $S(\lambda)$ is an irreducible $\K S_n$-representation is calibrated: every $e_i$ acts by $0$, $Y_1=0$ by definition, and so for all $2\leq j\leq n$, $Y_j$ acts by  $s_{j-1}Y_{j-1}s_{j-1}+Y_{j-1}=:X_j=\sum\limits_{k=1}^{j-1}(k,j)$, the $j$'th Jucys--Murphy element of $\K S_n$. It is a classical theorem in representation theory of the symmetric group that any irreducible $\K S_n$-representation has a basis with respect to which $X_2,\dots,X_n$ act by diagonal matrices \cite{Jucys},\cite{Murphy}; the foundational work of \cite{OkounkovVershik} builds up the representation theory of $S_n$ from scratch using the Jucys--Murphy elements.

\end{example}

\begin{example}
Every calibrated $\Hdeg_n$-representation is a calibrated $\sv_n$-representation, again by inflation.
\end{example}
\noindent The irreducible calibrated $\Hdeg_n$-representations were first classified by Cherednik \cite{Cherednik} (and see Kriloff and Ram's work \cite{KriloffRam} for the classification of the irreducible calibrated representations of degenerate affine Hecke algebras associated to other finite Coxeter groups).
Since they are known, we are interested in studying those irreducible calibrated $\sv_n$-representations which do not factor through $\Hdeg_n$, so those on which $e_i$ does not act by $0$ for all $1\leq i\leq n-1$.

\subsection{Action of the center of $\sv_n$ on calibrated representations}
Now recall the central element $\Theta\in \JJ(A_n)$ discussed in Section \ref{J rad}. Moving up to $\sv_n$, consider the similar-looking but $S_n$- rather than $S_{n-1}$-symmetric element:
$$\widetilde{\Theta}=\prod_{1\leq i<j\leq n}(1-(y_i-y_j)^2)=(1-(y_1-y_2)^2)(1-(y_1-y_3)^2)\cdots(1-(y_{n-1}-y_n)^2).$$
The element $\widetilde{\Theta}$ belongs to the center of $\sv_n$ \cite[Theorem 53]{us+}. The surjection $\Pi_n:\sv_n\twoheadrightarrow \sBr_n$ sends $\widetilde{\Theta}$ to $\Theta\prod_{2\leq j\leq n} (1-Y_j^2)$, so by Lemma \ref{J}(\ref{J anni}) we know that $\widetilde{\Theta}$ acts by $0$ on any irreducible $\sv_n$-representation which factors through $\sBr_n$. 

For a representation $V$ of $\sv_n$, write $y_j$ for the matrix by which the element $y_j$ acts on $V$ and let $(y_j)_{\ell,m}$ be the matrix entries, and similarly for $e_i$ and $s_i$.

\begin{theorem}\label{Theta} Suppose $V$ is an indecomposable calibrated representation of $\sv_n$ with a nonzero action of some $e_i$, $1\leq i\leq n-1$. Then $\widetilde{\Theta}$ acts by $0$ on $V$.
\end{theorem}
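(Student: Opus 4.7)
The plan is to use the fact that $\widetilde{\Theta}$ is a central element which is simultaneously a polynomial in the $y_j$'s, so on a calibrated representation it acts diagonally on the $y$-eigenbasis, and one can force this diagonal action to be scalar by indecomposability and then force the scalar to be zero using relation $(\vv_8)$.

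First I would fix a basis $\{v_1,\dots,v_d\}$ of $V$ consisting of joint eigenvectors for $y_1,\dots,y_n$, and write $a_k^{(m)}$ for the $y_k$-eigenvalue of $v_m$. Since $\widetilde{\Theta}$ is a polynomial in $y_1,\dots,y_n$, it acts diagonally in this basis with eigenvalue $c_m=\prod_{1\le i<j\le n}\bigl(1-(a_i^{(m)}-a_j^{(m)})^2\bigr)$ on $v_m$. In particular $\widetilde{\Theta}\in \End_{\sv_n}(V)$ is a diagonalizable endomorphism.

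Next I would invoke indecomposability. Because $V$ is finite-dimensional over $\K=\mathbb{C}$ and indecomposable, $\End_{\sv_n}(V)$ is a local $\mathbb{C}$-algebra with residue field $\mathbb{C}$, so every element is a scalar plus an element of the (nilpotent) maximal ideal. As $\widetilde{\Theta}$ is central in $\sv_n$, it lies in $\End_{\sv_n}(V)$, hence equals $\lambda\cdot\mathrm{Id}_V + N$ for some $\lambda\in\mathbb{C}$ and some nilpotent $N$. But $\widetilde{\Theta}$ is also diagonalizable, so $N=0$ and $\widetilde{\Theta}=\lambda\cdot\mathrm{Id}_V$. In particular $c_m=\lambda$ for every basis vector $v_m$.

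It remains to pin down $\lambda=0$, and this is where the hypothesis on the $e_i$'s enters. By assumption some $e_i$ acts nontrivially, so the matrix of $e_i$ has some nonzero entry $(e_i)_{\ell,m}\neq 0$. Apply relation $(\vv_8)$(i), namely $e_i(y_i-y_{i+1})=-e_i$, to the basis vector $v_m$:
\begin{equation*}
(a_i^{(m)}-a_{i+1}^{(m)})\,e_iv_m = -e_iv_m.
\end{equation*}
Reading the $\ell$-th coordinate yields $(a_i^{(m)}-a_{i+1}^{(m)}+1)(e_i)_{\ell,m}=0$, so $a_i^{(m)}-a_{i+1}^{(m)}=-1$. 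Then the factor $1-(a_i^{(m)}-a_{i+1}^{(m)})^2$ in $c_m$ vanishes, so $c_m=0$; combined with the previous paragraph this forces $\lambda=0$, whence $\widetilde{\Theta}$ acts by $0$ on $V$.

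The conceptual content is essentially a two-line observation once the setup is in place; the only potential obstacle is the appeal to $\End_{\sv_n}(V)$ being local, which I would justify either by citing the general structure theorem for endomorphism rings of indecomposables in a Krull--Schmidt category (already noted in the paper) or, if one prefers a hands-on argument, by noting that $\widetilde{\Theta}$, being a polynomial in commuting diagonalizable operators, lies in a semisimple commutative subalgebra of $\End_\K(V)$, so the decomposition of $V$ into $\widetilde{\Theta}$-eigenspaces is $\sv_n$-stable and indecomposability forces a single eigenspace.
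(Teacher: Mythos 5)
Your proof is correct and takes a genuinely different, and considerably slicker, route than the paper's. The paper proves directly that for \emph{every} index $\ell$ of the eigenbasis there exist $j,j'$ with $(y_j-y_{j'})_{\ell,\ell}=\pm 1$; this requires a fairly intricate case analysis, because basis vectors not appearing in any nonzero entry of any $e_i$ do not automatically satisfy such a relation, and one has to use some $s_i$ mixing the two blocks together with relations $(\vv_3)$ and $(\vv_7)$ to propagate the property from the ``good'' vectors to the rest. Your argument sidesteps all of that: since $\widetilde{\Theta}$ is central and a polynomial in the commuting diagonalizable $y_j$'s, it is a diagonalizable element of $\End_{\sv_n}(V)$, and Fitting's lemma (via locality of $\End_{\sv_n}(V)$) forces it to act by a single scalar on the indecomposable $V$; you then only need to exhibit \emph{one} eigenvector on which it vanishes, which follows immediately from a single nonzero entry of some $e_i$ and relation $(\vv_8)$(i). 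The paper's approach yields strictly more information along the way (the explicit spectral constraint on each basis vector, which it reuses later for Theorem \ref{integer evalues} and Lemma \ref{evalues closed form}), whereas yours buys a shorter and more conceptual proof of the stated theorem alone. Both are valid; you might note in a final sentence that the one basis vector you use gives $c_m=0$ via the factor $1-(y_i-y_{i+1})^2$, consistent with Lemma \ref{basicbitch} in the paper.
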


\begin{proof}
Let $d=\dim V$. Let $s_i,e_i,y_j$ denote the matrices of those generators acting on $V$ in the basis of joint eigenvectors $\overline{v}_1,\dots,\overline{v}_d$ for $y_1,\dots, y_n$. We must show that for each $\ell=1,\dots, d$, there exist $j,j'\in\{1,\dots,n-1\}$ such that $(y_j-y_{j'})_{\ell,\ell}=\pm 1$.

By Lemma \ref{cupcap ideal 1}, the matrix $e_i\neq 0$ for all $i=1,\dots, n-1$. As in the first part of the proof of \cite[Theorem 11]{some of us}, straightforward computations using Definition \ref{def svn}($\vv_8$) and ($\vv_7$) show that $(e_i)_{\ell,m}\neq 0$ implies that $(y_i-y_{i+1})_{\ell,\ell}=1$ and $(y_i-y_{i+1})_{m,m}=-1$, while $(s_i)_{\ell,m}\neq 0$ implies that $(y_i-y_{i+1})_{\ell,\ell}=\alpha$ and $(y_i-y_{i+1})_{m,m}=-\alpha$ for some $0\neq \alpha\in\K$.
 Therefore we would be done if for every $k=1,\dots, d$, there existed $i\in\{1,\dots,n\}$ such that the matrix $e_i$ had a nonzero entry in row or column $k$. However, for a general indecomposable representation there is no reason this should hold (see \cite{some of us} for counterexamples to such an expectation when $n=2$), and for the rest of the proof we assume this is not the case.

Order the basis of $y$-eigenvectors as follows: $\overline{v}_1,\dots,\overline{v}_r$ are the basis vectors such that for some $s=1,\dots,r$, there exist $j,j'\in\{1,2,\dots,n-1\}$ such that $(y_j-y_{j'})\overline{v}_s=\pm \overline{v}_s$, i.e., $(y_j-y_{j'})_{s,s}=\pm 1$. We have $r>0$ since every $e_i$ acts by a nonzero matrix, by the remarks above, and furthermore, if $(e_i)_{uv}\neq 0$ then $u,v\in\{1,2,\dots,r\}$. Supposing that $r<d$, we then have $\overline{v}_{r+1},\dots,\overline{v}_d$ with the property that for all $r+1\leq t\leq d$, $(y_j-y_{j'})_{t,t}\neq\pm 1$. Now the representation $V$ is assumed to be indecomposable, but every $e_i$ and every $y_j$ preserve the vector space decomposition $\mathrm{Span}\langle \overline{v}_1,\dots,\overline{v}_r\rangle \oplus \mathrm{Span}\langle \overline{v}_{r+1},\dots,\overline{v}_d\rangle$. . Therefore some $s_i$ must have a nonzero entry $(s_i)_{\ell,m}$ with either (i) $1\leq \ell\leq r$ and $r+1\leq m\leq d$, or (ii) $1\leq m\leq r$ and $r+1\leq \ell\leq d$. 

Without loss of generality let's assume (i) (as for situation (ii), the argument is the same). We will show that this forces $(y_k-y_{k'})\overline{v}_m=\pm \overline{v}_m$ for some $k,k'$, contradicting the assumption that $\overline{v}_1,\dots,\overline{v}_r$ are all of the basis vectors for which this happens.
 Since $1\leq \ell\leq r$, there exist some $k,k'\in\{1,2,\dots,n-1\}$ such that $(y_k-y_{k'})_{\ell,\ell}=\pm 1$. There are three cases: $k,k'\notin\{i,i+1\}$, $\{k,k'\}=\{i,i+1\}$, and $k\in\{i,i+1\}$ but $k'\notin\{i,i+1\}$. 
\underline{Case 1}. Assume $k,k'\notin\{i,i+1\}$. Using that $y_ks_i=s_iy_k$ if $k\neq i,i+1$ (Definition \ref{def svn}($\vv_3$)(iii)) and that $(s_i)_{\ell,m}\neq 0$, we get $(y_k)_{\ell,\ell}=(y_k)_{m,m}$ and $(y_{k'})_{\ell,\ell}=(y_{k'})_{m,m}$ and therefore $(y_k-y_{k'})_{m,m}=(y_k-y_{k'})_{\ell,\ell}=\pm 1$. 
\underline{Case 2}. Suppose $k=i$ and $k'=i+1$. Then $(y_i-y_{i+1})_{\ell,\ell}=\pm1$. But $(s_i)_{\ell,m}\neq 0$ implies that $(y_i-y_{i+1})_{\ell,\ell}=-(y_i-y_{i+1})_{m,m}$, so $(y_i-y_{i+1})_{m,m}=\pm 1$.
\underline{Case 3}. Suppose $k'=i$ and $k\notin\{i,i+1\}$. Then $(y_i-y_k)_{\ell,\ell}=\pm1$ and $(y_k)_{\ell,\ell}=(y_k)_{m,m}$. Considering the $\ell,m$'th matrix entry of the matrix equation $s_iy_i-y_{i+1}s_i=-e_i-1$ yields that $(y_i)_{m,m}=(y_{i+1})_{\ell,\ell}$ since $(e_i)_{\ell,m}=0$ for any $m>r$; similarly, $(y_i)_{\ell,\ell}=(y_{i+1})_{m,m}$ thanks to the equation $y_is_i-s_iy_{i+1}=e_i-1$. Then $(y_{i+1})_{m,m}=(y_i)_{\ell,\ell}=(y_k)_{\ell,\ell}\pm 1=(y_k)_{m,m}\pm 1$. This completes the proof. 
\end{proof}

The center of $\sBr_n$ contains a subalgebra given in terms of $\Theta$ and symmetric polynomials in $Y_2,\dots,Y_n$ \cite[Theorem 6.4.2]{Coulembier}: $$\K\oplus\Theta\K[Y_2,\dots,Y_n]^{S_{n-1}}\subseteq Z(\sBr_n).$$ Similarly, the center of $\sv_n$ is described (completely) in terms of $\widetilde{\Theta}$ and symmetric polynomials in $y_1,\dots,y_n$.
\begin{theorem}\label{Center}\cite[Theorem 53]{us+}
The center $Z(\sv_n)$ of $\sv_n$ is the subalgebra
$$Z(\sv_n)\cong\K\oplus\widetilde{\Theta}\K[y_1,\dots,y_n]^{S_n}.$$
That is, $h\in Z(\sv_n)$ if and only if $h=\widetilde{\Theta}f(y_1,\dots,y_n)+c$ for some symmetric polynomial $f(y_1,\dots,y_n)$ and some $c\in\K$. 
\end{theorem}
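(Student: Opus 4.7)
The plan is to establish both inclusions in $Z(\sv_n)=\K\oplus\widetilde{\Theta}\K[y_1,\dots,y_n]^{S_n}$, direct as $\K$-vector spaces since $\widetilde{\Theta}$ has positive degree. For the containment $\supseteq$, the crucial observation is that $\widetilde{\Theta}$ annihilates every $e_i$ from both sides. Writing $\widetilde{\Theta}=(1-(y_i-y_{i+1})^2)\widetilde{\Theta}'$, where $\widetilde{\Theta}'$ is the product of the remaining polynomial factors (which commutes with $(1-(y_i-y_{i+1})^2)$ since all factors are polynomials in the commuting $y_j$'s), the relations $(\vv_8)$ give $(y_i-y_{i+1})^2 e_i=(y_i-y_{i+1})e_i=e_i$ and $e_i(y_i-y_{i+1})^2=-e_i(y_i-y_{i+1})=e_i$, whence $(1-(y_i-y_{i+1})^2)e_i=0=e_i(1-(y_i-y_{i+1})^2)$ and hence $\widetilde{\Theta}e_i=e_i\widetilde{\Theta}=0$. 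So $\widetilde{\Theta}$ kills the whole two-sided ideal $(e_1)=(e_i)$ of Lemma~\ref{cupcap ideal 1}. For any $f\in\K[y_1,\dots,y_n]^{S_n}$ and any generator $g\in\{y_j,e_i,s_i\}$, the commutator $[g,f]$ lies in $(e_1)$: trivially when $g=y_j$; manifestly when $g=e_i$; and when $g=s_i$ by the Leibniz rule applied to $f$ written as a polynomial in the $(i,i+1)$-invariants $y_i+y_{i+1}$ and $y_iy_{i+1}$, reducing to the elementary commutators $[s_i,y_i+y_{i+1}]=-2e_i$ and $[s_i,y_iy_{i+1}]=-(e_iy_{i+1}+y_{i+1}e_i)$, both visibly in $(e_i)$. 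Combining centrality of $\widetilde{\Theta}$ with its annihilation of $(e_1)$ yields $[g,\widetilde{\Theta}f]=\widetilde{\Theta}[g,f]=0$.

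For the containment $\subseteq$, I first cut down to symmetric polynomials. Any $h\in Z(\sv_n)$ commutes with every $y_j$, and maximality of $\K[y_1,\dots,y_n]$ as a commutative subalgebra of $\sv_n$ \cite{us+} forces $h\in\K[y_1,\dots,y_n]$. Applying the quotient $\Phi_n:\sv_n\twoheadrightarrow\Hdeg_n$ of Lemma~\ref{Hdeg quotient}, which restricts to the identity on the polynomial subalgebra, lands $h$ in $Z(\Hdeg_n)=\K[y_1,\dots,y_n]^{S_n}$, so $h$ is symmetric.

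The remaining step is the main obstacle: showing that a central symmetric polynomial $h$ must have the form $c_0+\widetilde{\Theta}f$ for some scalar $c_0\in\K$ and symmetric $f$. My approach is to extract divisibility conditions from $[e_i,h]=0$. The relations $(\vv_8)$ yield the reduction formulas $e_ip(y_i,y_{i+1})=e_ip(y_{i+1}-1,y_{i+1})$ and $p(y_i,y_{i+1})e_i=p(y_i,y_i-1)e_i$, and the $(i,i+1)$-symmetry of $h$ makes both specializations coincide with a single univariate polynomial $H(t)$ obtained from $h$ by substituting $y_i=t$, $y_{i+1}=t-1$ and leaving the other variables free. Using the PBW theorem (Theorem~\ref{PBW}), the normal-diagram elements $\{y_{i+1}^ae_iy_{i+1}^b\}$ are $\K$-linearly independent, and expanding $[e_i,h]=e_iH(y_{i+1})-H(y_{i+1}+1)e_i$ in this basis then forces $H$ to be constant in $t$; equivalently, $h$ restricted to each hypersurface $\{y_i-y_{i+1}=\pm 1\}$ is independent of the floating variable. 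By $S_n$-invariance of $h$, the analogous statement holds on every $\{y_i-y_j=\pm 1\}$, and the commutation $[s_i,h]=0$ is then used to pin down a single common value $c_0\in\K$ taken by $h$ on all these hypersurfaces. Since the polynomials $(1-(y_i-y_j)^2)$ are pairwise coprime in the UFD $\K[y_1,\dots,y_n]$, their product $\widetilde{\Theta}$ divides $h-c_0$, and the quotient $f=(h-c_0)/\widetilde{\Theta}$ is automatically symmetric because both $h-c_0$ and $\widetilde{\Theta}$ are. The delicate part lies precisely in extracting the single scalar $c_0$ uniformly from the hypersurface constancy (as opposed to a function of the free variables), which requires reconciling the $[e_i,h]=0$ conditions with the $[s_i,h]=0$ conditions, and then promoting the pointwise vanishing on each hypersurface into global divisibility by the full product $\widetilde{\Theta}$.
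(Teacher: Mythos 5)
The paper merely cites this statement from \cite[Theorem 53]{us+} and does not reprove it, so there is no in-paper argument to compare against; I assess your proposal on its own.

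Your $\supseteq$ direction is circular at a crucial point. You conclude $[g,\widetilde{\Theta}f]=\widetilde{\Theta}[g,f]$ by ``combining centrality of $\widetilde{\Theta}$ with its annihilation of $(e_1)$'' --- but centrality of $\widetilde{\Theta}$ is precisely the case $f=1$ of the statement being proved. The two-sided annihilation $\widetilde{\Theta}e_i=e_i\widetilde{\Theta}=0$ which you do establish is not enough to conclude $\widetilde{\Theta}$ kills the full ideal $(e_1)$: a general element is a sum of products $ae_ib$ with $a,b\in\sv_n$, and one cannot slide $\widetilde{\Theta}$ past $a$ without already knowing it is central. The missing ingredient is the identity $[s_i,(y_i-y_{i+1})^2]=0$. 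Setting $d=y_i-y_{i+1}$, summing the two parts of $(\vv_7)$ yields $s_id+ds_i=-2$, whence $s_id^2=(-ds_i-2)d=-d(s_id)-2d=d^2s_i$. With this in hand, for any $(i,i{+}1)$-symmetric $Q$ one has $[s_i,(1-d^2)Q]=(1-d^2)[s_i,Q]$, and expanding $[s_i,Q]$ by the Leibniz rule in the invariants $y_i+y_{i+1}$ and $y_iy_{i+1}$ (exactly as you describe) produces a sum of terms each containing $e_i$ flanked by polynomials, which $(1-d^2)$ then annihilates. Applying this to $Q=\widetilde{\Theta}f/(1-d^2)$ proves $[s_i,\widetilde{\Theta}f]=0$ honestly, without circularity.

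Your $\subseteq$ direction is sound up to the step you yourself flag as delicate, but the appeal to $[s_i,h]=0$ is a distraction: the $[e_i,h]=0$ conditions plus $S_n$-symmetry already pin down the constant. Having shown $H(y_3,\dots,y_n):=h(t-1,t,y_3,\dots,y_n)$ is a polynomial independent of $t$, specialize $y_3=t+1$. On one hand $h(t-1,t,t+1,y_4,\dots)=H(t+1,y_4,\dots)$; on the other, the pair-$(2,3)$ constraint (with $y_1=t-1$ fixed) combined with $S_n$-symmetry gives $h(t-1,t,t+1,y_4,\dots)=h(t-1,-1,0,y_4,\dots)=h(-1,0,t-1,y_4,\dots)=H(t-1,y_4,\dots)$. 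Thus $H(t+1,\dots)=H(t-1,\dots)$ as polynomials over $\K$ of characteristic zero, so $H$ is independent of its first argument, and by the $S_{n-2}$-symmetry inherited from $h$ it is independent of all of them: $H=c_0\in\K$. The promotion to global divisibility by $\widetilde{\Theta}$ is then exactly as you describe, using pairwise coprimality of the linear factors $(y_i-y_j\pm 1)$, and $f=(h-c_0)/\widetilde{\Theta}$ is symmetric because both numerator and denominator are and $\K[y_1,\dots,y_n]$ is a domain.
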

\noindent Combined with Theorem \ref{Center}, Theorem \ref{Theta} immediately implies that $Z(\sv_n)$ acts trivially on those indecomposable calibrated representations of $\sv_n$ not factoring through the degenerate affine Hecke algebra:
\begin{corollary}\label{Theta cor} Let $h=\widetilde{\Theta}f(y_1,\dots,y_n)+c\in Z(\sv_n)$. Suppose $V$ is an indecomposable calibrated representation of $\sv_n$ with a nonzero action of some $e_i$, $1\leq i\leq n$. Then $h\cdot v=cv$ for all $v\in V$.
\end{corollary}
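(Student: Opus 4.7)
The plan is to deduce the corollary almost immediately from Theorem \ref{Theta} combined with the description of the center provided by Theorem \ref{Center}. The only real content is observing that if a central element has the form $\widetilde{\Theta}f(y_1,\dots,y_n)+c$, then its action on a representation on which $\widetilde{\Theta}$ acts by zero is just scalar multiplication by $c$.

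More concretely, I would proceed as follows. First, I would note that by hypothesis $V$ is an indecomposable calibrated $\sv_n$-representation on which some $e_i$ acts nontrivially, so Theorem \ref{Theta} applies and gives that the operator $\rho(\widetilde{\Theta})\colon V\to V$ is the zero operator. Second, for any $v\in V$, the element $f(y_1,\dots,y_n)\cdot v$ is again a vector in $V$, so
\[
\rho\bigl(\widetilde{\Theta}f(y_1,\dots,y_n)\bigr)\,v \;=\; \rho(\widetilde{\Theta})\,\bigl(\rho(f(y_1,\dots,y_n))\,v\bigr) \;=\; \rho(\widetilde{\Theta})(w)\;=\;0,
\]
where $w = f(y_1,\dots,y_n)\cdot v$. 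Hence $h\cdot v = \widetilde{\Theta}f(y_1,\dots,y_n)\cdot v + c\cdot v = 0 + cv = cv$, as claimed.

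There is essentially no obstacle here: the whole work is concentrated in Theorem \ref{Theta} (which handles the indecomposability and nontrivial cup-cap assumptions via a careful case analysis of the calibrated structure) and Theorem \ref{Center} (which pins down the shape of central elements). The corollary is just the bookkeeping step that combines these two results, and nothing beyond the fact that the zero operator composed with any linear map remains zero is needed to conclude. In particular, one does not need to assume that $f$ itself acts semisimply or diagonally in the calibrated basis; the factorization through $\rho(\widetilde{\Theta})=0$ kills the product regardless of the behavior of the symmetric polynomial factor.
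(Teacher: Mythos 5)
Your proposal is correct and matches the paper exactly: the paper also treats the corollary as an immediate consequence of Theorem \ref{Theta} (which gives $\widetilde{\Theta}$ acting by zero) together with Theorem \ref{Center} (which gives the form $h=\widetilde{\Theta}f+c$ of central elements). The factorization of the zero operator through $\rho(\widetilde{\Theta})$ is precisely the implicit bookkeeping the paper relies on.
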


\subsection{Eigenvalues of calibrated $\sv_n$-representations}
For any fixed $i$, we have a copy of $\sv_2$ given by $e_i$, $s_i$, $y_i$, and $y_{i+1}$ satisfying the defining relations of Definition \ref{def svn} in the case $n=2$, with $e_1, s_1,y_1,y_2$ replaced by $e_i,s_i,y_i,y_{i+1}$. We will call such a copy $\{e_i,s_i,y_i,y_{i+1}\}$ of $\sv_2$  an \textit{$\sv_2$-quadruple.} In our previous paper \cite{some of us}, we studied the calibrated representations of $\sv_2$. The representations of $\sv_2$ are fundamental for studying calibrated $\sBr_n$-representations for $n>2$, just as the representations of $\Hdeg_2$ play a fundamental role in Okounkov and Vershik's study of $\K S_n$-representations \cite[Section 5]{OkounkovVershik}, or as $\mathfrak{sl}_2$-triples appear as the basic building blocks for higher rank groups in Lie theory (see the Jacobson-Morozov Theorem and its applications, \cite[Theorem 3.7.1 and Section 3.7]{ChrissGinzburg}). 

We state some basic facts about calibrated $\sv_2$-representations which follow easily from the defining relations ($\sv_7$) and ($\sv_8$) in Definition \ref{def svn} and appeared in our paper about indecomposable calibrated representations of $\sv_2$ \cite[Theorem 11]{some of us}:
\begin{lemma}\label{basicbitch}\cite{some of us} Let $\sv_2=\{ e_i,s_i,y_i,y_{i+1}\}$ be an $\sv_2$-quadruple in $\sv_n$, and let $V$ be a calibrated $\sv_2$-representation. The matrices of $e_i$, $s_i$, $y_i$, and $y_{i+1}$ in the basis of $y$-eigenvectors satisfy:
\begin{itemize}
\item $(e_i)_{\ell,\ell}=0$ for all $\ell=1,\dots, \dim V$;
\item $(e_i)_{\ell,m}\neq 0$ implies that $(y_i-y_{i+1})_{\ell,\ell}=1$ and $(y_i-y_{i+1})_{m,m}=-1$;
\item the formula $(e_i)_{\ell,m}=((y_i)_{\ell,\ell}-(y_{i+1})_{m,m})(s_i)_{\ell,m}$ holds for all $\ell\neq m$;
\item $-1=((y_i)_{\ell,\ell}-(y_{i+1})_{\ell,\ell})(s_i)_{\ell,\ell}$.
\end{itemize}
\end{lemma}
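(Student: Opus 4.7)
The plan is to derive each of the four bullet points by taking matrix entries of the defining relations $(\vv_7)$ and $(\vv_8)$ of Definition \ref{def svn} in the fixed eigenbasis $v_1,\dots,v_d$ of $y_i$ and $y_{i+1}$. Throughout, set $a_\ell:=(y_i)_{\ell,\ell}$ and $b_\ell:=(y_{i+1})_{\ell,\ell}$. Because $y_i$ and $y_{i+1}$ are diagonal in this basis, for any matrix $M$ we have $(y_iM)_{\ell,m}=a_\ell M_{\ell,m}$ and $(My_i)_{\ell,m}=M_{\ell,m}a_m$, and similarly for $y_{i+1}$. This single observation is what makes all four statements essentially immediate; there is no real obstacle, only bookkeeping.

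First I would prove the diagonal vanishing of $e_i$. Looking at the $(\ell,\ell)$-entry of $(y_i-y_{i+1})e_i=e_i$ gives $(a_\ell-b_\ell)(e_i)_{\ell,\ell}=(e_i)_{\ell,\ell}$, while the $(\ell,\ell)$-entry of $e_i(y_i-y_{i+1})=-e_i$ gives $(a_\ell-b_\ell)(e_i)_{\ell,\ell}=-(e_i)_{\ell,\ell}$. The two equations force $(e_i)_{\ell,\ell}=0$, since $a_\ell-b_\ell$ cannot simultaneously equal $1$ and $-1$. This is the first bullet.

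For the second bullet, I would take the $(\ell,m)$-entry (for arbitrary $\ell,m$) of the same two relations. From $(y_i-y_{i+1})e_i=e_i$ we obtain $(a_\ell-b_\ell)(e_i)_{\ell,m}=(e_i)_{\ell,m}$, so $(e_i)_{\ell,m}\neq 0$ forces $a_\ell-b_\ell=1$. Symmetrically, $e_i(y_i-y_{i+1})=-e_i$ gives $(a_m-b_m)(e_i)_{\ell,m}=-(e_i)_{\ell,m}$, so $(e_i)_{\ell,m}\neq 0$ forces $a_m-b_m=-1$. This is exactly the second bullet.

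For the last two bullets, I would apply the same procedure to the relation $y_is_i-s_iy_{i+1}=e_i-1$ from $(\vv_7)$(ii). The $(\ell,m)$-entry computes to $a_\ell(s_i)_{\ell,m}-(s_i)_{\ell,m}b_m=(e_i)_{\ell,m}-\delta_{\ell,m}$. For $\ell\neq m$ the Kronecker delta drops out and we read off $(e_i)_{\ell,m}=(a_\ell-b_m)(s_i)_{\ell,m}$, the third bullet. Setting $\ell=m$ and using the first bullet ($(e_i)_{\ell,\ell}=0$) gives $(a_\ell-b_\ell)(s_i)_{\ell,\ell}=-1$, the fourth bullet. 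All four statements thus follow from three one-line matrix-entry computations; no deeper structural fact about $\sv_2$ is required beyond the axioms already cited.
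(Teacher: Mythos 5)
Your proof is correct and follows exactly the route the paper indicates: the statement is quoted from \cite{some of us} with the remark that it "follows easily from the defining relations $(\vv_7)$ and $(\vv_8)$," and your matrix-entry computations in the joint eigenbasis are precisely those straightforward computations. All four bullets check out as you derive them.
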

\noindent These facts are crucial for understanding the $y_j$- or $Y_j$-eigenvalues of a calibrated $\sv_n$- or $\sBr_n$-representation.

\begin{theorem}\label{integer evalues} Let $V$ be a calibrated $\sv_n$-representation. Suppose the eigenvalues of $y_1$ are in $\Z$. Then the eigenvalues of $y_j$ are in $\Z$ for all $j=2,\dots, n$.
\end{theorem}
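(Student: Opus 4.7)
The plan is to prove the theorem by induction on $j$, where the base case $j=1$ is given by hypothesis. For the inductive step, assume that every eigenvalue of $y_j$ lies in $\Z$, fix a basis vector $\overline{v}_\ell$ from the common basis of joint $y$-eigenvectors, and aim to show $(y_{j+1})_{\ell,\ell}\in\Z$. The whole analysis will take place inside the $\sv_2$-quadruple $\{e_j,s_j,y_j,y_{j+1}\}$, pulling information from Lemma \ref{basicbitch} and from the relation $s_j^2=1$.

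First I would dispose of the easy case: if there is some $m$ with $(e_j)_{\ell,m}\neq 0$ or $(e_j)_{m,\ell}\neq 0$, then Lemma \ref{basicbitch} gives $(y_j-y_{j+1})_{\ell,\ell}=\pm 1$ directly, so $(y_{j+1})_{\ell,\ell}=(y_j)_{\ell,\ell}\mp 1\in\Z$ by the inductive hypothesis. The core of the argument is the complementary case, in which row $\ell$ and column $\ell$ of the matrix $e_j$ are identically zero.

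In that case I would apply the off-diagonal identity from Lemma \ref{basicbitch}, namely $(e_j)_{m,\ell}=((y_j)_{m,m}-(y_{j+1})_{\ell,\ell})(s_j)_{m,\ell}$ for each $m\neq\ell$. The vanishing of column $\ell$ of $e_j$ then produces a dichotomy: either some $m\neq \ell$ satisfies $(s_j)_{m,\ell}\neq 0$, in which case $(y_{j+1})_{\ell,\ell}=(y_j)_{m,m}\in\Z$ by the inductive hypothesis, or else $(s_j)_{m,\ell}=0$ for every $m\neq\ell$, i.e., column $\ell$ of $s_j$ is supported only on the diagonal.

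The main technical obstacle is this last subcase, where $\overline{v}_\ell$ is invisible both to $e_j$ and to the off-diagonal part of $s_j$. Here I would use the involutivity $s_j^2=1$: reading off the $(\ell,\ell)$-entry, every term $\sum_m (s_j)_{\ell,m}(s_j)_{m,\ell}$ with $m\neq\ell$ drops out, forcing $(s_j)_{\ell,\ell}^2=1$ and hence $(s_j)_{\ell,\ell}=\pm 1$. Substituting into the diagonal relation $(s_j)_{\ell,\ell}\bigl((y_j)_{\ell,\ell}-(y_{j+1})_{\ell,\ell}\bigr)=-1$ from Lemma \ref{basicbitch} yields $(y_{j+1})_{\ell,\ell}=(y_j)_{\ell,\ell}\pm 1\in\Z$, which closes the induction.
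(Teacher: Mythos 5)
Your proof is correct and follows essentially the same route as the paper: induction on $j$, combined with the case analysis from Lemma \ref{basicbitch} (nonzero entry of $e_j$, nonzero off-diagonal entry of $s_j$, or neither, in which case $s_j^2=1$ forces $(s_j)_{\ell,\ell}=\pm 1$ and the diagonal relation closes the argument). The only difference is cosmetic — you fix a single index $\ell$ and run the trichotomy on column $\ell$, which is a slightly cleaner bookkeeping than the paper's shifting between $\ell$ and $m$.
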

\begin{proof} 

Assume by induction that $y_i$ has integer eigenvalues for some $i\geq 1$, so in the basis of eigenvectors for $V$ the matrix $y_i$ is a diagonal $d$ by $d$ matrix with integer entries, $d=\dim V$. Let $1\leq \ell,m\leq d$, $\ell\neq m$, and suppose first that $(e_i)_{\ell,m}\neq 0$. Then $(y_i-y_{i+1})_{\ell,\ell}=1$ and $(y_i-y_{i+1})_{m,m}=-1$ so $(y_{i+1})_{m,m}=(y_i)_{m,m}+1$ and $(y_{i+1})_{\ell,\ell}=(y_i)_{\ell,\ell}-1$ and therefore $(y_{i+1})_{\ell,\ell},(y_{i+1})_{m,m}\in\Z$. If $(e_i)_{\ell,m}= 0$ there are two possibilities. Either (i) $(s_i)_{\ell,m}\neq 0$, or (ii) $(s_i)_{\ell,m}=0$. In case (i), $0=(e_i)_{\ell,m}=((y_i)_{\ell,\ell}-(y_{i+1})_{m,m})(s_i)_{\ell,m}$ implies that $(y_{i+1})_{m,m}=(y_i)_{\ell,\ell}\in\Z$. So $(y_{i+1})_{m,m}\in\Z$ for all $m$ such that column $m$ of $s_i$ contains a nonzero off-diagonal entry. Then case (ii) only needs to be dealt with when there exists some $1\leq m\leq d$ with $(s_i)_{\ell,m}=0$ for all $\ell\neq m$. Since $(s_i)^2=1$, we must have $(s_i)_{m,m}=\pm 1$. The formula $-1=((y_i)_{m,m}-(y_{i+1})_{m,m})(s_i)_{m,m}$ then gives us that $(y_i)_{m,m}-(y_{i+1})_{m,m}=\mp 1$, so again $(y_{i+1})_{m,m}\in\Z$ by induction.
\end{proof}

In fact, the proof just given tells us more than just that the eigenvalues of $y_j$ are in $\Z$. It also tells us that if $e_i$ has nonzero entries in all the off-diagonal spots where $s_i$ has nonzero entries (i.e. $(e_i)_{\ell,m}\neq 0$ whenever $(s_i)_{\ell,m}\neq 0$ for $\ell\neq m$) then each diagonal entry of $y_{i+1}$ is obtained from the corresponding diagonal entry of $y_i$ by adding or subtracting $1$ (i.e. $y_{i+1}\overline{v}_m=(y_i\pm 1)\overline{v}_m$ for all $m=1,\ldots,\dim V$).

Let us consider in more detail the case that $(e_i)_{\ell,m}=0$ but $(s_i)_{\ell,m}\neq 0$ in a calibrated representation.  Let $(y_i)_{\ell,\ell}=c$ and $(y_i)_{m,m}=d$. If we consider the $\K$-vector subspace of $V$ spanned by $\overline{v}_\ell$ and $\overline{v}_m$, then the $\sv_2$-quadruple acting on this subspace is the same as $\Hdeg_2$ acting on this subspace, since $e_i$ acts by $0$. The representations of $\Hdeg_2$ were analyzed in \cite[Section 5]{OkounkovVershik}. In matrices we have:
$$y_i=\begin{pmatrix}
c&0\\0&d
\end{pmatrix},\quad
y_{i+1}=\begin{pmatrix}
d&0\\0&c
\end{pmatrix},\quad
s_i=\begin{pmatrix}
\frac{-1}{c-d}&s_{\ell,m}\\
s_{m,\ell}&\frac{1}{c-d}
\end{pmatrix},\quad
e_i=\begin{pmatrix}
0&0\\0&0
\end{pmatrix}.
$$
In particular, $c\neq d$, and using the equations coming from the off-diagonal entries in the matrix equation $s_i^2=1$ and solving for $s_{m,\ell}$, it follows that $s_{m,\ell}\neq 0$ as well. As an $\Hdeg_2$-representation, this subspace spanned by $\overline{v}_\ell$ and $\overline{v}_m$ is irreducible. Summarizing, we have:

\begin{theorem}\label{evalues must move}
Let $V$ be a calibrated $\sv_n$-representation and let $\overline{v}$ be one of the eigenvectors in the basis for $V$. Then $y_i\overline{v}\neq y_{i+1}\overline{v}$ for all $i=1,\dots,n-1$. 
\end{theorem}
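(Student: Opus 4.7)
The plan is to derive this directly from Lemma~\ref{basicbitch}, applied to the $\sv_2$-quadruple $\{e_i, s_i, y_i, y_{i+1}\}$. First note that if $V$ is a calibrated $\sv_n$-representation with basis of joint eigenvectors $\overline{v}_1, \ldots, \overline{v}_d$ for $y_1, \ldots, y_n$, then restricted to any $\sv_2$-quadruple $\{e_i, s_i, y_i, y_{i+1}\}$, $V$ is a calibrated $\sv_2$-representation with respect to the same basis, since in particular $y_i$ and $y_{i+1}$ act diagonally on it. So Lemma~\ref{basicbitch} is available.

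Now fix any basis eigenvector $\overline{v} = \overline{v}_\ell$ and any $i \in \{1, \ldots, n-1\}$. By the fourth bullet of Lemma~\ref{basicbitch} we have
\[
-1 = \bigl((y_i)_{\ell,\ell} - (y_{i+1})_{\ell,\ell}\bigr)(s_i)_{\ell,\ell}.
\]
Since the right-hand side is a nonzero scalar, both factors on the left must be nonzero; in particular, $(y_i)_{\ell,\ell} \neq (y_{i+1})_{\ell,\ell}$. Because $y_i \overline{v}_\ell = (y_i)_{\ell,\ell}\,\overline{v}_\ell$ and $y_{i+1}\overline{v}_\ell = (y_{i+1})_{\ell,\ell}\,\overline{v}_\ell$, this yields $y_i \overline{v} \neq y_{i+1}\overline{v}$, as desired.

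There is no real obstacle here: the content of the theorem is essentially already packaged inside Lemma~\ref{basicbitch}. That fourth bullet is itself obtained by reading off the $(\ell,\ell)$ entry of the defining relation $s_i y_i - y_{i+1} s_i = -e_i - 1$ of $(\vv_7)(\mathrm{i})$ in the basis of $y$-eigenvectors, using that $(e_i)_{\ell,\ell} = 0$ in a calibrated representation (first bullet of Lemma~\ref{basicbitch}). Thus the proof is a one-line consequence: a product of two scalars equals $-1$, so neither factor can vanish.
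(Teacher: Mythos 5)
Your proof is correct and uses the same key ingredient the paper does — the fourth bullet of Lemma~\ref{basicbitch}, namely $-1=((y_i)_{\ell,\ell}-(y_{i+1})_{\ell,\ell})(s_i)_{\ell,\ell}$, which is the diagonal reading of relation $(\vv_7)$(i) combined with $(e_i)_{\ell,\ell}=0$. Where you differ is in economy: the paper reaches Theorem~\ref{evalues must move} as a summary of a longer discussion intertwined with the proof of Theorem~\ref{integer evalues}, proceeding by cases on whether $(e_i)_{\ell,m}$ and $(s_i)_{\ell,m}$ vanish and examining the resulting $2\times 2$ $\Hdeg_2$-block spanned by $\overline{v}_\ell,\overline{v}_m$; in that development $c\neq d$ emerges alongside the integrality claims. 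You correctly observe that the case analysis is unnecessary for this particular statement: the identity $-1=((y_i)_{\ell,\ell}-(y_{i+1})_{\ell,\ell})(s_i)_{\ell,\ell}$ holds for every $\ell$ unconditionally, and a product of scalars equal to $-1$ forces both factors nonzero. This is a genuine simplification — the same mechanism, but extracted as a clean one-line argument rather than arrived at through the surrounding case-by-case discussion.
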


\subsection{Eigenvalues of calibrated $\sBr_n$-representations}
Applying Theorems \ref{integer evalues} and \ref{evalues must move} to the case $y_1=0$, we obtain:
\begin{corollary}\label{evalues sBr ints}
Let $V$ be a calibrated $\sBr_n$-representation. Then the eigenvalues of $Y_j$ are in $\Z$ for all $j=2,\dots, n$, and their entries as diagonal matrices satisfy $(Y_j)_{\ell,\ell}\neq (Y_{j+1})_{\ell,\ell}$ for all $\ell=1,\dots, \dim V$ and for each $j=1,\dots,n-1$.
\end{corollary}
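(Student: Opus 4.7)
The plan is to reduce the corollary to the already-established theorems about $\sv_n$ by inflating $V$ along the surjection $\Pi_n:\sv_n\twoheadrightarrow\sBr_n$ of Lemma \ref{JM eval}. Concretely, given a calibrated $\sBr_n$-representation $V$, I would promote it to an $\sv_n$-representation $\Infl V$ on the same underlying vector space by declaring $y_j$ to act by $\Pi_n(y_j)=Y_j$ for $j\geq 1$, so in particular $y_1$ acts by $Y_1=0$. The first thing to check is that $\Infl V$ really is a calibrated $\sv_n$-representation in the sense of the definition: the action of every $y_j$ is simultaneously diagonalizable in the chosen basis because $V$ was calibrated as an $\sBr_n$-representation (the $Y_j$ were assumed jointly diagonalizable), and $y_1$ acts by the zero matrix, which is trivially diagonal in any basis.

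Once $V$ is recognized as a calibrated $\sv_n$-representation with $y_1=0$, the eigenvalues of $y_1$ are all $0\in\Z$. Theorem \ref{integer evalues} then immediately gives that the eigenvalues of $y_j$ lie in $\Z$ for all $j=2,\dots,n$. Since $y_j$ acts by $Y_j$ on $\Infl V$, which has the same matrices for these generators as $V$ does for the original $\sBr_n$-action, the eigenvalues of $Y_j$ on $V$ lie in $\Z$ as well.

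For the second statement, I would invoke Theorem \ref{evalues must move} applied to $\Infl V$, which asserts $y_j\overline{v}\neq y_{j+1}\overline{v}$ for every basis eigenvector $\overline{v}$ and every $j=1,\dots,n-1$. Translated back to the $\sBr_n$-action, this is exactly the diagonal inequality $(Y_j)_{\ell,\ell}\neq (Y_{j+1})_{\ell,\ell}$ for every $\ell$ and every $j$.

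I do not anticipate a serious obstacle here; the only subtlety worth being explicit about is confirming that inflation preserves the calibrated property (trivial, since the $y_j$ simply inherit the diagonal matrices of $Y_j$, and $y_1=0$ is diagonal), after which both claims are pure corollaries of the named theorems.
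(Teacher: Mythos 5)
Your proposal is exactly the paper's argument: the paper observes that a calibrated $\sBr_n$-representation is the same thing as a calibrated $\sv_n$-representation on which $y_1$ acts by $0$ (inflation along $\Pi_n$), and then reads the corollary off Theorems \ref{integer evalues} and \ref{evalues must move} with $y_1=0$. Correct and identical in substance.
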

Note that when $V$ is a calibrated $\sBr_n$-representation, $Y_2=s_1Y_1s_1+s_1+e_1=s_1+e_1$ and $Y_2^2=(s_1+e_1)^2=s_1^2+s_1e_1+e_1s_1+e_1^2=\Id-e_1+e_1+0=\Id$, so 
the first step of determining $Y_2$ produces a diagonal matrix with all $1$'s and $-1$'s for its diagonal entries. Since $e_1$ has $0$'s on the diagonal, $Y_2$ is equal to the diagonal of $s_1$ and so all diagonal entries of $s_1$ are $\pm 1$. If we order the basis so that the $-1$ diagonal entries of $s_1$ come first, then we have:
$$s_1=\begin{pmatrix} -\Id & S\\ 0&\Id\end{pmatrix},\quad e_1=\begin{pmatrix} 0&-S\\0&0\end{pmatrix},\quad Y_2=\begin{pmatrix}-\Id&0\\0&\Id\end{pmatrix}.$$

In particular, the non-zero, off-diagonal entries of $s_1$ coincide with the non-zero entries of $e_1$. In general the entries of $e_i$ will not be identical to the off-diagonal entries of $s_i$, but in the situation that for every $i=1,\dots,n-1$, $e_i$ has non-zero entries in all of the spots where $s_i$ has off-diagonal, non-zero entries then the eigenvalues of $Y_2,\dots,Y_n$ are easy to determine. This situation will be illustrated by the class of examples in Section \ref{irreduciblecali}.

\begin{lemma}\label{evalues closed form}
Let $V$ be a calibrated representation of $\sBr_n$. Suppose that for all $i=1,\dots, n-1$ and all $1\leq \ell\neq m\leq \dim V$, $(e_i)_{\ell,m}\neq 0$ if and only if $(s_i)_{\ell,m}\neq 0$. Then $$(Y_{j+1})_{m,m}=(Y_j)_{m,m}+(s_j)_{m,m}=(Y_j)_{m,m}\pm 1$$
and we have the closed formula
$$(Y_{j+1})_{m,m}=\sum_{a=1}^j (s_a)_{m,m}=\sum_{a=1}^j \epsilon_a$$
where $\epsilon_a\in\{\pm 1\}$ for each $a=1,\dots,j$.
\end{lemma}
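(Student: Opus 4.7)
The recursive formula $(Y_{j+1})_{m,m} = (Y_j)_{m,m} + (s_j)_{m,m}$ is the main content, since the closed-form sum will follow immediately by induction from $Y_1 = 0$ and the recursion. So the plan is to compute the diagonal entries of $Y_{j+1} = s_j Y_j s_j + s_j + e_j$ using the hypothesis that the supports of the off-diagonal parts of $s_j$ and $e_j$ coincide, together with the four structural identities from Lemma \ref{basicbitch} applied to the $\sv_2$-quadruple $\{e_j, s_j, y_j, y_{j+1}\}$ (specialized to $\sBr_n$ via $\Pi_n$).

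First, I would note that since $(e_j)_{m,m} = 0$ and $Y_j$ is diagonal in our chosen basis, we really only need to determine $(s_j)_{m,m}$ and $(Y_{j+1})_{m,m}$. I split into three cases for each fixed $m$:
\begin{itemize}
\item If there exists $\ell \neq m$ with $(e_j)_{m,\ell} \neq 0$, then by Lemma \ref{basicbitch} we have $(Y_j - Y_{j+1})_{m,m} = 1$, so $(Y_{j+1})_{m,m} = (Y_j)_{m,m} - 1$. The diagonal identity $-1 = ((Y_j)_{m,m} - (Y_{j+1})_{m,m})(s_j)_{m,m}$ then forces $(s_j)_{m,m} = -1$, which agrees with $(Y_{j+1})_{m,m} - (Y_j)_{m,m}$.
\item If there exists $\ell \neq m$ with $(e_j)_{\ell,m} \neq 0$, then Lemma \ref{basicbitch} gives $(Y_j - Y_{j+1})_{m,m} = -1$ and the diagonal identity yields $(s_j)_{m,m} = 1$, again matching $(Y_{j+1})_{m,m} - (Y_j)_{m,m}$. (Observe that this case and the previous one are mutually exclusive, since $(Y_j - Y_{j+1})_{m,m}$ cannot equal both $1$ and $-1$.)
\item If neither row $m$ nor column $m$ of $e_j$ has a nonzero off-diagonal entry, then by the hypothesis the same is true of $s_j$; hence the $m$th row and column of $s_j$ are supported only on the diagonal, and $s_j^2 = 1$ forces $(s_j)_{m,m} = \pm 1$. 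The diagonal identity then delivers $(Y_{j+1})_{m,m} - (Y_j)_{m,m} = (s_j)_{m,m}$.
\end{itemize}

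In every case we conclude $(Y_{j+1})_{m,m} = (Y_j)_{m,m} + (s_j)_{m,m}$ with $(s_j)_{m,m} = \pm 1$. Iterating this from $Y_1 = 0$ gives the telescoping sum $(Y_{j+1})_{m,m} = \sum_{a=1}^{j} (s_a)_{m,m} = \sum_{a=1}^{j} \epsilon_a$ with each $\epsilon_a = (s_a)_{m,m} \in \{\pm 1\}$.

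The only delicate point — and what I expect to be the main obstacle in writing this cleanly — is ensuring that the three cases above are exhaustive and that the consistency of the two possible signs from Lemma \ref{basicbitch} (row vs.\ column of $e_j$) is recorded correctly. The hypothesis that the off-diagonal supports of $s_j$ and $e_j$ coincide is what prevents the awkward ``mixed'' scenario (where $(s_j)_{\ell,m} \neq 0$ but $(e_j)_{\ell,m} = 0$) that appeared in the proof of Theorem \ref{integer evalues} and would otherwise give $(Y_{j+1})_{m,m} = (Y_j)_{\ell,\ell}$ instead of a $\pm 1$ shift of $(Y_j)_{m,m}$.
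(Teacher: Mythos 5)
Your proof is correct and reaches the same conclusion as the paper's, but it is organized differently. The paper proves the recursion by directly expanding the matrix product: it first uses Lemma \ref{basicbitch} plus the hypothesis to show that any row $\ell$ of $s_j$ with an off-diagonal entry must have an all-zero column $\ell$ (and vice versa), so that $(s_j^2)_{\ell,\ell}=(s_j)_{\ell,\ell}^2=1$, and then computes $(y_{j+1})_{\ell,\ell}=\sum_m (s_j)_{\ell,m}(y_j)_{m,m}(s_j)_{m,\ell}+(s_j)_{\ell,\ell}=(y_j)_{\ell,\ell}+(s_j)_{\ell,\ell}$ directly, with the cross terms killed by that orthogonality property. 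You instead avoid multiplying out $s_jY_js_j$ entirely and argue via a three-way case split on whether $e_j$ has an entry in row $m$, in column $m$, or in neither, applying the diagonal identity $-1=((Y_j)_{m,m}-(Y_{j+1})_{m,m})(s_j)_{m,m}$ from Lemma \ref{basicbitch} to pin down $(s_j)_{m,m}$ in each branch. One small payoff of your route is that it makes visible where the hypothesis $(e_j)_{\ell,m}\neq 0\Leftrightarrow(s_j)_{\ell,m}\neq 0$ is actually load-bearing: only in the third case, to force $(s_j)_{m,m}=\pm 1$; in the first two cases Lemma \ref{basicbitch} alone already produces the conclusion. Both arguments are valid, and the casework is about the same length as the paper's computation.
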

\begin{proof}
Lemma \ref{basicbitch} implies that $(e_i)_{\ell,m}\neq 0$ only if  $(s_i)_{\ell,m}\neq 0$, so the additional assumption we are imposing is really only the converse. Suppose that $(e_i)_{\ell,m}\neq 0$ if $(s_i)_{\ell,m}\neq 0$. By Lemma \ref{basicbitch}, $(e_i)_{\ell,m}\neq 0$ implies that $(y_i-y_{i+1})_{\ell,\ell}=1$ and $(y_i-y_{i+1})_{m,m}=-1$. Therefore if $e_i$ has nonzero entries in row $\ell$ then it has all $0$ entries in column $\ell$, and if $e_i$ has nonzero entries in column $m$ then it has all $0$ entries in row $m$. Then the same is true for the off-diagonal entries of $s_i$ by our assumption. Let $d=\dim V$. Using that $s_i^2=1$, this implies $1=(s_i^2)_{\ell,\ell}=\sum_{m=0}^d(s_i)_{\ell,m}(s_i)_{m,\ell}=(s_i)_{\ell,\ell}^2$, and therefore $(s_i)_{\ell,\ell}=\pm 1$ for all $\ell=1,\dots,d$. We compute the equation $y_{j+1}=s_jy_js_j+s_j+e_j$ for the $\ell,\ell$'th matrix entry, and using the same property of the rows and columns of $s_j$ and the fact that $e_j$ is $0$ on the diagonal, we find:
\begin{align*}(y_{j+1})_{\ell,\ell}&=\sum_{m=1}^d (s_j)_{\ell,m}(y_j)_{m,m}(s_{j})_{m,\ell}+(s_j)_{\ell,\ell}=(s_j)_{\ell,\ell}^2(y_j)_{\ell,\ell}+(s_j)_{\ell,\ell}\\&=(y_j)_{\ell,\ell}+(s_j)_{\ell,\ell}=(y_j)_{\ell,\ell}\pm 1.
\end{align*}
The closed formula then follows by induction.
\end{proof}

\subsection{Restriction of irreducible calibrated representations and non-semisimplicity} One might expect based upon experience with more familiar algebras that an irreducible calibrated $\sBr_n$-representation would restrict to a direct sum of irreducible calibrated $\sBr_{n-1}$-representations. That is, one can ask the question:
\begin{question} Let $\LL$ be an irreducible calibrated representation of $\sBr_n$ and consider its restriction $\Res^{\sBr_n}_{\sBr_{n-1}}\LL$ where $\sBr_{n-1}$ is the subalgebra of $\sBr_n$ generated by $e_1,\dots,e_{n-2}, s_1,\dots,s_{n-2}$. Clearly $\Res^{\sBr_n}_{\sBr_{n-1}}\LL$ is calibrated since $Y_2,\dots,Y_{n-1}$ act by diagonal matrices on $\LL$ given that $Y_2,\dots,Y_{n-1},Y_n$ all do. Does $\Res^{\sBr_n}_{\sBr_{n-1}}\LL$ decompose as a direct sum of irreducible calibrated representations of $\sBr_{n-1}$?
\end{question}
The whole idea of calibrated representations is that of an inductively defined subalgebra acting semisimply so the answer feels like it should be yes. Indeed, in parts of the literature ``calibrated" is used synonymously with ``completely splittable," where the latter term means exactly that the restriction of the representation to any Levi or Young subgroup is semisimple. Even in positive characteristic, calibrated and completely splittable representations of $\Hdeg_n$ (and hence $k S_n$) coincide \cite[Theorem 2.13]{Ruff}. But for $\sBr_n$, what is the answer?
\begin{answer}
No. And the smallest counterexample is $2$-dimensional. 
Let $\LL$ be the $2$-dimensional representation of $\sBr_3$ given by:
\begin{align*}
&s_1=\begin{pmatrix}-1&-1\\0&1 \end{pmatrix},
\qquad\; s_2=\begin{pmatrix}
1&0\\-1&-1
\end{pmatrix},\\
&e_1=\begin{pmatrix}
0&1\\0&0
\end{pmatrix},\qquad\qquad
e_2=\begin{pmatrix}
0&0\\
-1&0
\end{pmatrix},\\
&y_1=\begin{pmatrix}
0&0\\0&0
\end{pmatrix}=y_3,\qquad
y_2=\begin{pmatrix}
-1&0\\0&1
\end{pmatrix}.
\end{align*}
It is easy to check manually that $\LL$ is irreducible as an $\sBr_3$-representation, and indecomposable as a calibrated $\sBr_2$-representation under the action of $s_1,e_1,y_1,$ and $y_2$. And this is no isolated counterexample, it is the first in an infinite series constructed in the next section: see Theorem \ref{res special}.
\end{answer}

\section{Construction of the irreducible calibrated representations of $\sBr_n$ and $\sv_n$ given by exterior powers of the standard representation of $S_n$}\label{irreduciblecali}

\subsection{The standard representation of $S_n$}

We showed in \cite{some of us} that $e_1$ must act by $0$ on any irreducible calibrated representation of $\sv_2$. This is no longer true for $\sv_n$ when $n>2$: then $\sv_n$ has a natural $(n-1)$-dimensional irreducible calibrated representation $\CC_\alpha(V_n)$, depending on a parameter $\alpha\in\K$, on which $e_i$ acts by a nonzero matrix for all $i=1,\dots,n-1$ and which is irreducible as an $S_n$-representation -- the underlying $S_n$-representation is the standard representation $V_n$.  When $\alpha=0$ we then $\CC_0(V_n)=\Infl \LL(\mu)$ is the inflation from $\sBr_n$ to $\sv_n$ of some irreducible representation of $\sBr_n$ labeled by  $\mu$, a partition of $n$ or $n-2$ or $n-4$, or so on. The reader might be tempted to guess that $\mu=(n-1,1)$; this is incorrect as $\LL(\mu)$ for $\mu$ a partition of $n$ is inflated from $\K S_n$ to $\sBr_n$ and so every $e_i$ acts on it by $0$.

 Here are the matrices for the action of the generators of $\sv_n$ on the representation $\CC_\alpha(V_n)$ when $n=5$, to give an idea. Then we write the formula for arbitrary $n$ below. 
The $s_i$'s act as:
\begin{align*}
s_1&=\small 
\begin{pmatrix} -1&-1&0&0\\0&1&0&0\\0&0&1&0\\0&0&0&1\end{pmatrix}, \quad s_2=\small 
\begin{pmatrix} 1&0&0&0\\-1&-1&-1&0\\0&0&1&0\\0&0&0&1\end{pmatrix}, \\
s_3&=\small 
\begin{pmatrix} 1&0&0&0\\0&1&0&0\\0&-1&-1&-1\\0&0&0&1\end{pmatrix}, \quad s_4=\small 
\begin{pmatrix} 1&0&0&0\\0&1&0&0\\0&0&1&0\\0&0&-1&-1\end{pmatrix}.
\end{align*}
The $e_i$'s act as:
\begin{align*}
e_1&=\small 
\begin{pmatrix} 0&1&0&0\\0&0&0&0\\0&0&0&0\\0&0&0&0\end{pmatrix},\quad e_2=\small 
\begin{pmatrix} 0&0&0&0\\-1&0&1&0\\0&0&0&0\\0&0&0&0\end{pmatrix},\\
e_3&=\small 
\begin{pmatrix} 0&0&0&0\\0&0&0&0\\0&-1&0&1\\0&0&0&0\end{pmatrix},\quad e_4=\small 
\begin{pmatrix} 0&0&0&0\\0&0&0&0\\0&0&0&0\\0&0&-1&0\end{pmatrix}.
\end{align*}
Then we require that $y_1$ acts by 
$$y_1=\small  
\begin{pmatrix} a&0&0&0\\0&a&0&0\\0&0&a&0\\0&0&0&a\end{pmatrix}$$
for an arbitrary $\alpha\in\K$. Since for each $i=2,3,4,5$, $y_i$ can be expressed in terms of $y_1$, the $e_i$'s and the $s_i$'s, the matrices for $y_2,y_3,y_4,y_5$ are now completely determined. If we compute them using the formula $y_{i+1}=s_iy_is_i+s_i+e_i$ we find:
\begin{align*}
y_2&=\small 
\begin{pmatrix}\alpha-1&0&0&0\\0&\alpha+1&0&0\\0&0&\alpha+1&0\\0&0&0&\alpha+1\end{pmatrix},\quad 
y_3=\small 
\begin{pmatrix}\alpha&0&0&0\\0&\alpha&0&0\\0&0&\alpha+2&0\\0&0&0&\alpha+2\end{pmatrix},\\
y_4&=\small 
\begin{pmatrix}\alpha+1&0&0&0\\ 0&\alpha+1&0&0\\0&0&\alpha+1&0\\0&0&0&\alpha+3\end{pmatrix}, \quad 
y_5=\small 
\begin{pmatrix}\alpha+2&0&0&0\\0&\alpha+2&0&0\\0&0&\alpha+2&0\\0&0&0&\alpha+2\end{pmatrix}. 
\end{align*}
Now we give the general formula for $\CC_\alpha(V_n)$ as a representation for $\sv_n$, generalizing the $n=5$ example above. Let $f_{i,j}$ be the $n-1$ by $n-1$ matrix with all $0$ entries except a $1$ in row $i$, column $j$.

\begin{theorem}\label{reflection rep} Let $\CC_\alpha(V_n)$ be the $(n-1)$-dimensional $\K$-vector space $V_n$ acted on by the following matrices in $\End(V_n)$:
\begin{align*}
&e_1=f_{1,2},\quad e_{n-1}=-f_{n-1,n-2},\;\hbox{ and }\;e_i=-f_{i,i-1}+f_{i,i+1}\hbox{ for }2\leq i\leq n-2,\\
&s_1=-f_{1,1}-f_{1,2}+\sum_{j=2}^{n-1}f_{j,j},\quad s_{n-1}=-f_{n-1,n-2}-f_{n-1,n-1}+\sum_{j=1}^{n-2}f_{j,j},\\
&\quad \hbox{ and }\; s_i=-f_{i,i-1}-f_{i,i}-f_{i,i+1}+\sum_{j=1}^{i-1}f_{j,j}+\sum_{k=i+1}^{n-1}f_{k,k} \hbox{ for }2\leq i\leq n-2,\\
&y_1=\alpha\sum_{j=1}^{n-1}f_{j,j},\quad y_{i+1}=y_{i}-f_{i,i}+\sum_{j=1}^{i-1}f_{j,j}+\sum_{k=i+1}^{n-1}f_{k,k} \hbox{ for } 1\leq i\leq n-1.
\end{align*}
Then 
the matrices $e_i,s_i,y_j$ satisfy the defining relation of $\sv_n$ given in Definition \ref{def svn}. 
\end{theorem}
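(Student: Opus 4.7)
The plan is to verify each of the defining relations $(\vv_1)$--$(\vv_8)$ of Definition \ref{def svn} by direct computation with the explicit matrices, exploiting the locally-supported structure of the generators: each $s_i$ and $e_i$ agrees with the identity (respectively vanishes) outside the block of rows and columns indexed by $\{i-1,i,i+1\}\cap\{1,\dots,n-1\}$, while the $y_j$ are diagonal throughout. Consequently every relation involving generators whose indices differ by more than one, namely $(\vv_2)$(i),(ii), $(\vv_3)$(i), and the ``far'' instances of $(\vv_2)$(iii) and $(\vv_3)$(iii), reduces to the trivial fact that matrices supported on disjoint blocks commute, together with the observation that a diagonal matrix commutes with a block matrix provided the diagonal is constant on the block.

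First I would handle the $y$-free relations. The involution $s_i^2=1$ and the nilpotency $e_i^2=0$ are one-line checks using the rule $f_{a,b}f_{c,d}=\delta_{b,c}f_{a,d}$. The braid relation $(\vv_3)$(ii) and the Temperley--Lieb-type relations $(\vv_4)$ each reduce to a computation in the $4\times 4$ block indexed by $\{i-1,i,i+1,i+2\}$, shrinking to a $3\times 3$ block at the boundaries $i\in\{1,n-2\}$. For the mixed relations $(\vv_5)$, I would compute $s_ie_i$ and $e_is_i$ in the relevant $3\times 3$ block (or $2\times 2$ block at the boundary) to obtain $(\vv_5)$(i), and then verify $(\vv_5)$(iii) and $(\vv_5)$(v) in a $4\times 4$ block; relations $(\vv_5)$(ii) and (iv) then follow formally from these together with $(\vv_4)$, as noted in Remark \ref{less relns}.

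Next I would address the $y$-relations. The central observation is that the inductive formula $y_{i+1}=y_i-f_{i,i}+\sum_{j\neq i}f_{j,j}$ given in the statement is precisely engineered so that $y_{i+1}=s_iy_is_i+s_i+e_i$, which is exactly $(\vv_7)$ in the form of Remark \ref{better reln}. In the generic $3\times 3$ block at rows and columns $\{i-1,i,i+1\}$ with $y_i|_{\mathrm{block}}=\mathrm{diag}(a,b,c)$, a short computation shows that $s_iy_is_i+s_i+e_i$ equals $\mathrm{diag}(a+1,b-1,c+1)$ provided $b-a=2$ and $b=c$, with the analogous boundary constraints $a=b$ at $i=1$ and $b-a=2$ at $i=n-1$. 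These constraints hold at the base $y_1=\alpha I$ and propagate along the induction, yielding the closed form
\[
(y_j)_{\ell,\ell}=\begin{cases}\alpha+j-1 & \text{if } \ell\geq j,\\ \alpha+j-3 & \text{if } \ell<j.\end{cases}
\]
With this closed form in hand the remaining $y$-relations become transparent: $(\vv_2)$(iv) is automatic since all $y_j$ are diagonal; $(\vv_8)$ follows because $y_i-y_{i+1}$ has entry $+1$ at position $(i,i)$ and $-1$ elsewhere on the diagonal, which meshes exactly with the single nonzero row of $e_i$ supported on columns $\{i-1,i+1\}$; and the commutations $s_iy_j=y_js_i$ and $e_iy_j=y_je_i$ for $j\notin\{i,i+1\}$ hold because the closed form forces $(y_j)_{i-1,i-1}=(y_j)_{i,i}=(y_j)_{i+1,i+1}$ precisely in this range of $j$.

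The main obstacle is the careful bookkeeping for the boundary cases $i\in\{1,n-1\}$, where the matrices $s_i$ and $e_i$ are supported in a $2\times 2$ block rather than a $3\times 3$ block and the corresponding constraints on $y_i$ differ slightly from those in the middle. These must be handled in parallel with the generic calculation; no conceptual obstacle arises, only additional computation. Beyond this, every relation becomes a polynomial identity in finitely many explicit matrix entries, and its verification is mechanical once the closed form for the $y_j$ is established.
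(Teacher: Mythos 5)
Your proposal is correct: I checked the key computations and they all go through. In the generic $3\times 3$ block one indeed finds $s_iy_is_i+s_i+e_i=\mathrm{diag}(a+1,b-1,c+1)$ exactly when $b-a=2$ and $b=c$, your closed form $(y_j)_{\ell,\ell}=\alpha+j-1$ for $\ell\geq j$ and $\alpha+j-3$ for $\ell<j$ matches the recursion and propagates these constraints, and it delivers $(\vv_8)$ and the commutations $s_iy_j=y_js_i$, $e_iy_j=y_je_i$ for $j\notin\{i,i+1\}$ exactly as you say. The route is, however, not the one the paper takes: the paper gives no standalone proof of Theorem \ref{reflection rep} but declares it subsumed by the proof of Theorem \ref{exterior powers}, where all relations are verified for every exterior power $\Lambda^k V_n$ at once by entry-by-entry bookkeeping in the wedge basis (supplemented by Lemma \ref{checking Vn a rep of Sn} for the $S_n$-relations and by Mathematica code for fixed $n$). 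Your argument is the specialization $k=1$ carried out directly and more elementarily: everything lives in $2\times 2$, $3\times 3$, or $4\times 4$ blocks, and the explicit eigenvalue formula for the $y_j$ replaces the paper's inductive manipulation of $t_j=\diag(s_j)$ and $p_j=s_j-t_j$ (a decomposition you also use, so the two proofs share their basic mechanism). What the paper's approach buys is that the general $\Lambda^k$ case comes for free; what yours buys is a short, self-contained, human-checkable proof of this particular theorem together with a transparent description of the spectrum. The only cosmetic caveat is that $s_i$ is not literally "supported on a block" --- it is the identity plus a perturbation $q_i$ supported in row $i$, columns $\{i-1,i,i+1\}$ --- but the far-commutation relations still reduce to $q_iq_j=q_jq_i=0$ and $q_ie_j=e_jq_i=0$ for $|i-j|>1$, so nothing breaks.
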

The proof that $\CC_\alpha(V_n)$ is an $\sv_n$-representation is subsumed by the proof of Theorem \ref{exterior powers}. A Mathematica code for checking that $V_n$ is an $\sv_n$-representation for a given value of $n$ is in the Appendix \ref{code}. For the purposes of setting up 
Theorem \ref{exterior powers}, we check:
\begin{lemma}\label{checking Vn a rep of Sn} $V_n$ is a representation of $S_n$. \end{lemma}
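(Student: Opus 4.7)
The plan is to realize the matrices of Theorem \ref{reflection rep} as the matrices of the standard permutation action of $S_n$ on its $(n-1)$-dimensional standard representation, expressed in a suitable basis. Since the standard representation is a genuine $S_n$-representation, once this identification is made, the defining relations of $S_n$---$s_i^2=1$, the far commutations $s_is_j=s_js_i$ for $|i-j|>1$, and the braid relations $s_is_{i+1}s_i=s_{i+1}s_is_{i+1}$---hold automatically.

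Concretely, I would model $V_n$ as the hyperplane $\{(x_1,\dots,x_n)\mid\sum_k x_k=0\}\subset\K^n$ on which $S_n$ acts by coordinate permutation of the standard basis vectors $\varepsilon_1,\dots,\varepsilon_n$, and set
\[v_j:=(-1)^{j+1}(\varepsilon_j-\varepsilon_{j+1})\quad\text{for }j=1,\dots,n-1.\]
The alternating signs are what account for the minus signs appearing off the diagonal in the formulas for $s_i$. A direct computation using $s_i=(i,i+1)$ and telescoping identities such as $(-1)^i(\varepsilon_{i-1}-\varepsilon_{i+1})=(-1)^i(\varepsilon_{i-1}-\varepsilon_i)+(-1)^i(\varepsilon_i-\varepsilon_{i+1})=v_{i-1}-v_i$ then shows that $s_i(v_j)=v_j$ for $j\notin\{i-1,i,i+1\}$, $s_i(v_i)=-v_i$, $s_i(v_{i-1})=v_{i-1}-v_i$, and $s_i(v_{i+1})=v_{i+1}-v_i$. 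Reading off the matrix of $s_i$ in the basis $\{v_1,\dots,v_{n-1}\}$ reproduces exactly the formulas of Theorem \ref{reflection rep}; the boundary cases $i=1$ and $i=n-1$ amount to dropping the terms involving the absent basis vectors $v_0$ and $v_n$, yielding precisely the special formulas for $s_1$ and $s_{n-1}$.

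The main and essentially only obstacle is the bookkeeping needed to find the right alternating sign in the basis change; after that the proof is mechanical, since $S_n$ acting by coordinate permutation on $\K^n$ is a representation. A purely computational alternative would be to check the three families of defining relations by direct matrix multiplication: $s_i^2=1$ and $s_is_j=s_js_i$ for $|i-j|>1$ are immediate once one observes that $s_i$ differs from the identity matrix only in its $i$-th row, while the braid relation $s_is_{i+1}s_i=s_{i+1}s_is_{i+1}$ can be checked in the $4\times 4$ submatrix indexed by rows and columns $i-1,i,i+1,i+2$.
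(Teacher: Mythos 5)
Your proposal is correct and takes a genuinely different route from the paper. The paper's proof of Lemma \ref{checking Vn a rep of Sn} is a direct matrix computation: it writes $s_i=t_i+p_i$ as the sum of its diagonal part $t_i$ and off-diagonal part $p_i$, shows $s_i^2=1$ from the algebraic identities $t_i^2=1$, $p_i^2=0$, $t_ip_i=-p_i$, $p_it_i=p_i$, and then verifies the braid relation $s_is_{i+1}s_i=s_{i+1}s_is_{i+1}$ by explicitly multiplying out both sides entry by entry. You instead give a conceptual change-of-basis argument: you exhibit the matrices of Theorem \ref{reflection rep} as the matrices of coordinate permutation on the hyperplane $\{x\in\K^n\mid\sum_k x_k=0\}$ with respect to the alternating basis $v_j=(-1)^{j+1}(\varepsilon_j-\varepsilon_{j+1})$, after which all group relations hold for free. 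I have checked your formulas $s_i(v_{i-1})=v_{i-1}-v_i$, $s_i(v_i)=-v_i$, $s_i(v_{i+1})=v_{i+1}-v_i$, $s_i(v_j)=v_j$ otherwise, and they do reproduce the columns of the matrix in Theorem \ref{reflection rep}, including the boundary cases $i=1$ and $i=n-1$. Your approach is shorter and in fact proves more: it identifies $V_n$ as the standard representation on the nose, which the paper establishes separately (via a character computation) as part of Theorem \ref{checking it's the reflection rep}. What the paper's bare-hands approach buys in exchange is that the decomposition $s_i=t_i+p_i$ and the resulting algebraic identities are reused verbatim, and substantially extended, in the much harder verification of Theorem \ref{exterior powers} for $\Lambda^k V_n$, where no comparably clean change-of-basis shortcut is offered; so the computational method is laying groundwork rather than being an oversight. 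Your closing remark about the purely computational alternative is essentially a summary of what the paper actually does.
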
\begin{proof} We need to check that $s_i^2=1$, $s_is_{i+1}s_i=s_{i+1}s_is_{i+1}$, and $s_is_j=s_js_i$ for $|i-j|>1$.

Write $s_i=t_i+p_i$, where $t_i=\diag(s_i)$ is the diagonal part of $s_i$ and $p_i=s_i-t_i$ is the off-diagonal part of $s_i$. Then $t_i=\diag(1,1,\dots,1,-1,1,\dots,1)$ with $-1$ in position $i$, and for $1\leq i\leq n-2$, $p_i=-f_{i,i-1}-f_{i,i+1}$ while $p_1=-f_{1,2}$ and $p_{n-1}=-f_{n-1,n-2}$. Then $t_i^2=1$, $p_i^2=0$, $t_ip_i=-p_i$, and $p_it_i=p_i$, so $s_i^2=(t_i+p_i)^2=1$.

Next we check the braid relation. Compute \begin{align*}s_is_{i+1}&=\sum_{j=1}^{i-1}f_{j,j}+\sum_{k=i+2}^{n-1}f_{k,k}-f_{i,i-1}+f_{i,i+1}+f_{i,i+2}-f_{i+1,i}-f_{i+1,i+1}-f_{i+1,i+2}\\
&=\diag(1,\dots,1,0,-1,1,\dots,1)+\left(-f_{i,i-1}-f_{i+1,i+2}+f_{i,i+1}+f_{i,i+2}-f_{i+1,i}\right), 
\end{align*}
where the $0$ is in row $i$. Let $1\leq i\leq n-2$ and in the extremal cases $i=1$ or $i+1=n-1$ set $f_{0,-}$ and $f_{-,n}$ equal to $0$ in the formulas.
Then 
\begin{align*}
s_is_{i+1}s_i&=s_is_{i+1}(t_i+p_i)\\
&=\diag(1,\dots,1,0,-1,1,\dots,1)+\left(-f_{i,i-1}-f_{i+1,i+2}+f_{i,i+1}+f_{i,i+2}+f_{i+1,i}\right)\\
&\quad +0 + \left(f_{i+1,i-1}+f_{i+1,i+1}\right)\\
&=\diag(1,\dots,1,0,0,\dots,1)\\&\qquad+\left(f_{i+1,i-1}-f_{i,i-1}+f_{i+1,i}+f_{i,i+1}-f_{i+1,i+2}+f_{i,i+2}  \right)\\
s_{i+1}s_is_{i+1}&=(t_{i+1}+p_{i+1})s_is_{i+1}\\
&=\diag(1,\dots,1,0,1,1,\dots,1)+\left(-f_{i,i-1}+f_{i+1,i+2}+f_{i,i+1}+f_{i,i+2}+f_{i+1,i}\right)\\
&\quad -f_{i+1,i+2}+\left(f_{i+1,i-1}-f_{i+1,i+1}-f_{i+1,i+2}\right)\\
&=\diag(1,\dots,1,0,0,1,\dots,1)\\
&\qquad +\left(f_{i+1,i-1}-f_{i,i-1}+f_{i+1,i}+f_{i,i+1}-f_{i+1,i+2}+f_{i,i+2}\right).
\end{align*}

Finally, the relation $s_is_j=s_js_i$ if $|i-j|>1$ is clear.
\end{proof}
\begin{theorem}\label{checking it's the reflection rep}
The representation $\CC_\alpha(V_n)$ is irreducible considered as a $\K S_n$-representation, and therefore is irreducible as an $\sv_n$-representation (and as an $\sBr_n$-representation if $\alpha=0$). As a $\K S_n$-representation, $\CC_\alpha(V_n)\cong S(n-1,1)$, the standard representation. 
\end{theorem}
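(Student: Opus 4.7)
The plan is to exhibit an explicit $S_n$-equivariant isomorphism between $\CC_\alpha(V_n)$ and the usual construction of the standard representation $S(n-1,1)$, realized concretely as the hyperplane $\{(a_1,\dots,a_n)\in\K^n : \sum a_i = 0\}$ with $S_n$ acting by permuting coordinates. Let $E_1,\dots,E_n$ denote the standard basis of $\K^n$, and set $w_j := E_j - E_{j+1}$ for $j = 1,\dots,n-1$; these form a basis of $S(n-1,1)$, and a simple transposition $s_i = (i,i+1)$ acts by
\begin{align*}
s_i w_{i-1} = w_{i-1} + w_i,\quad s_i w_i = -w_i,\quad s_i w_{i+1} = w_i + w_{i+1},\quad s_i w_j = w_j \text{ for } j\notin\{i-1,i,i+1\}.
\end{align*}

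By Lemma \ref{checking Vn a rep of Sn} the matrices for $s_1,\dots,s_{n-1}$ in Theorem \ref{reflection rep} already satisfy the defining relations of $S_n$, so $\CC_\alpha(V_n)$ is a $\K S_n$-representation. Reading off the matrix entries, the same $s_i$ acts on the implicit basis $v_1,\dots,v_{n-1}$ of $\CC_\alpha(V_n)$ by
$$s_i v_{i-1} = v_{i-1} - v_i,\quad s_i v_i = -v_i,\quad s_i v_{i+1} = -v_i + v_{i+1},\quad s_i v_j = v_j\text{ for }j\notin\{i-1,i,i+1\},$$
with the obvious modifications at the boundaries $i=1$ and $i=n-1$. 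The only difference between the two actions is an alternating sign in certain off-diagonal entries, which suggests defining the $\K$-linear map $\phi:\CC_\alpha(V_n)\to S(n-1,1)$ by $\phi(v_j) := (-1)^j w_j$. I would then verify $S_n$-equivariance by checking $\phi(s_i v_j) = s_i \phi(v_j)$ on each basis vector for each generator $s_i$; for instance, $\phi(s_i v_{i-1}) = (-1)^{i-1}w_{i-1} + (-1)^{i-1}w_i = (-1)^{i-1}(w_{i-1}+w_i) = s_i\phi(v_{i-1})$, and the remaining cases (and the endpoint cases $i=1$, $i=n-1$) are analogous. Since $\phi$ sends a basis bijectively (up to signs) to a basis, it is a $\K$-linear isomorphism, and the intertwining property upgrades it to a $\K S_n$-isomorphism.

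Irreducibility of $\CC_\alpha(V_n)$ as a $\K S_n$-representation is then immediate from the well-known irreducibility of $S(n-1,1)$. Because $S_n$ sits as a subalgebra of $\sv_n$ (respectively $\sBr_n$), any $\sv_n$- or $\sBr_n$-invariant subspace is in particular a $\K S_n$-invariant subspace, so $\CC_\alpha(V_n)$ is irreducible as an $\sv_n$-representation; when $\alpha = 0$, the action factors through the quotient $\Pi_n:\sv_n\twoheadrightarrow\sBr_n$ of Lemma \ref{JM eval}, yielding irreducibility as an $\sBr_n$-representation. The only real difficulty is locating the correct twist $(-1)^j$ that aligns the two sign conventions; as an alternative check, one could instead compute $\tr(s_i) = -1 + (n-2) = n-3$ directly from the formula for $s_i$ in Theorem \ref{reflection rep}, matching the character of $S(n-1,1)$ on a transposition, but the explicit isomorphism is more informative and avoids any separate appeal to character theory.
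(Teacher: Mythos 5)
Your proof is correct, and it takes a genuinely different route from the paper's. The paper argues in two separate steps: first it shows by hand that every nonzero vector of $\CC_\alpha(V_n)$ is cyclic under $\K S_n$ (successively applying $s_i-1$ to isolate a coordinate, with a final case analysis for the one vector pattern that survives all the earlier steps), and only then identifies the representation as $S(n-1,1)$ by computing the character values $\chi(1)=n-1$ and $\chi(s_1)=n-3$. You instead produce the explicit intertwiner $\phi(v_j)=(-1)^j w_j$ onto the sum-zero hyperplane model of $S(n-1,1)$, which delivers the identification and the irreducibility in one stroke by citing the standard fact that $S(n-1,1)$ is irreducible. I checked your sign: with $s_i v_{i-1}=v_{i-1}-v_i$, $s_iv_i=-v_i$, $s_iv_{i+1}=-v_i+v_{i+1}$ read off the columns of the matrices in Theorem \ref{reflection rep}, and $s_iw_{i\pm1}=w_{i\pm1}+w_i$, $s_iw_i=-w_i$ on the difference basis, the alternating twist $(-1)^j$ does intertwine the two actions, including at the boundary indices where the $j=i-1$ or $j=i+1$ cases are vacuous. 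Your approach buys a cleaner and more informative argument (an explicit isomorphism rather than a character match), at the cost of invoking the irreducibility of $S(n-1,1)$ as known; the paper's approach is self-contained on the irreducibility point and its cyclic-vector computation is a template reused implicitly elsewhere, but it needs the separate character-theoretic step to pin down the label. The final reduction from $\K S_n$-irreducibility to $\sv_n$- and $\sBr_n$-irreducibility is identical in both arguments.
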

\begin{proof} Let $\overline{v}=(v_1,\dots,v_{n-1})\in V_n$, $\overline{v}\neq 0$. We must show that $\K S_n\cdot \overline{v}=V_n$. It is clear that any vector $(0,\dots,\star,\dots,0)\in V_n$ with a single nonzero entry $\star$ generates $V_n$ as a $\K S_n$-representation, so if $\K S_n\cdot\overline{v}$ contains such a vector then we are done. Let $j\in\N$ be minimal such that $v_j\neq 0$. If $j>1$ then $s_{j-1}\cdot \overline{v}=\overline{v}-(0,\dots,v_j,\dots, 0)$ with $v_j$ in the $j-1$'st spot, so then $(s_{j-1}-1)\cdot \overline{v}=(0,\dots,\star,\dots,0)$ and we are done. So we many assume that $v_1\neq 0$. Then applying $s_1-1$ to $\overline{v}$ we get that $(-2v_1-v_2,0,\dots,0)$ is in $\K S_n\cdot \overline{v}$. So if $v_2\neq -2v_1$ then $\K S_n\cdot \overline{v}=V_n$. Suppose $v_2=-2v_1$. Applying $s_2-1$ to $\overline{v}$ we then get that $(0,3v_1-v_3,0,\dots,0)$ is in $\K S_n\cdot \overline{v}$ so we are done unless $v_3=3v_1$. Continuing in this way by applying $(s_i-1)$ for $i=1,\dots,n-2$ in succession, we see that if $\K S_n\cdot\overline{v}\subsetneq V_n$ then $\overline{v}=(v_1,-2v_1,3v_1,-4v_1,\dots,(-1)^{n-1}(n-1)v_1)$. If this is the case, then consider $(s_{n-1}-1)\overline{v}=(0,\dots,0,(-1)^{n-1}(n-2)v_1+(-1)^n2(n-1)v_1)=(0,\dots,0,(-1)^{n}n v_1)\neq 0$, which then generates $V_n$. So $\K S_n\cdot \overline{v}=V_n$. 

Now to see which irreducible representation of $\K S_n$ this representation $V_n$ is, we use the character $\chi_{V_n}$ of $V_n$. We have $\chi_{V_n}(1)=n-1$ and $\chi_{V_n}(s_1)=n-3$.  The only irreducible character $\chi$ of $\K S_n$ with these properties is $\chi_{(n-1,1)}$, the character of the standard representation. 
\end{proof}

\begin{corollary}
Let $\alpha=0$. We then have $\CC_0(V_n)=\Infl\LL(\mu)$ for some $\mu\in\Lambda$ and some irreducible $\sBr_n$-module $\LL(\mu)$. It follows that the Jucys--Murphy elements $Y_j=\Pi(y_j)$ act on $\LL(\mu)$ by the diagonal matrices given by Theorem \ref{reflection rep} with $\alpha=0$.
\end{corollary}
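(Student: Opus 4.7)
The plan is to reduce the statement to the factorization $\Pi_n:\sv_n\twoheadrightarrow\sBr_n$ whose kernel is the two-sided ideal $(y_1)$, as recorded in Lemma \ref{JM eval}. First I would observe that when $\alpha=0$ the defining formulas of Theorem \ref{reflection rep} give $y_1=0$ on $\CC_0(V_n)$. Hence the representation $\rho:\sv_n\rightarrow\End(V_n)$ defining $\CC_0(V_n)$ kills the ideal $(y_1)=\Ker\Pi_n$, so $\rho$ factors uniquely through $\Pi_n$ as $\overline{\rho}\circ\Pi_n$ for some algebra homomorphism $\overline{\rho}:\sBr_n\rightarrow\End(V_n)$. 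By construction, $\CC_0(V_n)$ is then the inflation along $\Pi_n$ of the $\sBr_n$-module $(V_n,\overline{\rho})$, i.e. an $\sBr_n$-module with the added stipulation that $y_j$ acts as $Y_j=\Pi_n(y_j)$.

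Next I would verify that $(V_n,\overline{\rho})$ is irreducible as an $\sBr_n$-module. Because inflation is a fully faithful embedding of $\sBr_n$-mod into $\sv_n$-mod, an $\sBr_n$-subspace of $(V_n,\overline{\rho})$ is the same thing as an $\sv_n$-subspace of $\CC_0(V_n)$; Theorem \ref{checking it's the reflection rep} already shows the latter is irreducible as a $\K S_n$-representation, and a fortiori as an $\sv_n$-representation. Therefore $(V_n,\overline{\rho})$ is a simple object of $\sBr_n$-mod, and by the classification of simple $\sBr_n$-modules (\cite{Coulembier}) it is isomorphic to $\LL(\mu)$ for a unique $\mu$ in the index set $\Lambda$ of partitions of $n,n-2,n-4,\dots$ (excluding $\emptyset$ when $n$ is even). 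This gives the identification $\CC_0(V_n)=\Infl \LL(\mu)$.

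The last sentence of the corollary is then immediate: since $\CC_0(V_n)$ is defined as the inflation of $\LL(\mu)$, the operator by which $Y_j=\Pi_n(y_j)$ acts on $\LL(\mu)$ is, by the very definition of inflation, the same matrix by which $y_j$ acts on $\CC_0(V_n)$, which is the diagonal matrix written down in Theorem \ref{reflection rep} with $\alpha$ set to $0$. No further calculation is required, and no obstacle arises here; the entire content of the statement is the observation that $y_1\mapsto 0$ forces the representation through $\sBr_n$, combined with the irreducibility already established. I do not attempt to identify the partition $\mu\in\Lambda$ explicitly, as the statement of the corollary only asserts its existence; identifying $\mu$ is naturally deferred to the branching analysis in Section \ref{branching}.
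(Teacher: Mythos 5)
Your argument is correct and is exactly the reasoning the paper leaves implicit (no proof is printed for this corollary): $\alpha=0$ makes $y_1$ act by $0$, so the representation kills $\Ker\Pi_n=(y_1)$ and factors through $\sBr_n$, irreducibility is inherited from the $\K S_n$-irreducibility established in Theorem \ref{checking it's the reflection rep}, and the statement about the $Y_j$ is just the definition of inflation along $\Pi_n$. Nothing is missing.
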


\subsection{All irreducible representations of $\sBr_4$ are calibrated}\label{A4}
Let $n=4$. There are three irreducible representations of $S_4$ of dimension bigger than $1$, labeled by the partitions $(3,1),(2^2),(2,1^2)$. It turns out that we may extend two of these three irreducible $S_n$-representations in a unique way to a representation of $\sBr_4$ over $\K$ such that the $e_i$'s act by nonzero matrices and the $y_j$'s act diagonally with $y_1$ acting by $0$. 

$\bullet\;$ $\CC_0(3,1)=\CC_0(V_4)$ is calibrated by Theorem \ref{reflection rep}. Explicitly, the matrices are given by:
\begin{align*}
&s_1=\small 
\begin{pmatrix}
-1&-1&0\\0&1&0\\0&0&1
\end{pmatrix},\quad
s_2=\small  
\begin{pmatrix}
1&0&0\\-1&-1&-1\\0&0&1
\end{pmatrix},\quad
s_3=\small 
\begin{pmatrix}
1&0&0\\0&1&0\\0&-1&-1
\end{pmatrix},\\
&e_1=\small 
\begin{pmatrix}
0&1&0\\0&0&0\\0&0&0
\end{pmatrix},\quad
e_2=\small 
\begin{pmatrix}
0&0&0\\-1&0&1\\0&0&0
\end{pmatrix},\quad
e_3=\small 
\begin{pmatrix}
0&0&0\\0&0&0\\0&-1&0
\end{pmatrix},\\
&y_1=\small 
\begin{pmatrix}
0&0&0\\0&0&0\\0&0&0
\end{pmatrix},\quad
y_2=\small 
\begin{pmatrix}
-1&0&0\\0&1&0\\0&0&1
\end{pmatrix},\quad
y_3=\small 
\begin{pmatrix}
0&0&0\\0&0&0\\0&0&2
\end{pmatrix},\quad
y_4=\small 
\begin{pmatrix}
1&0&0\\0&1&0\\0&0&1
\end{pmatrix}. 
\end{align*}
\\

$\bullet\;$ 
 The $S_4$-representation $S(3,1)$ is the standard representation $V_4$ and $S(2,1^2)=\Lambda^2 V_4$. We may then calculate the matrices for $s_i$ acting on $\Lambda^2 V_4$ :
\begin{align*}
&s_1=\small  
\begin{pmatrix}
-1&0&0\\0&-1&-1\\0&0&1
\end{pmatrix},\quad
s_2=\small  
\begin{pmatrix}
-1&-1&0\\0&1&0\\0&-1&-1
\end{pmatrix},\quad
s_3=\small  
\begin{pmatrix}
1&0&0\\-1&-1&0\\0&0&-1
\end{pmatrix}.
\end{align*}
It turns out that using the same kind of formulas as we did for $V_n$ for the $e_i$'s and $y_j$'s results in a representation $\CC_0(2,1^2)$ of $\sv_4$ and $\sBr_4$, that is we take:
\begin{align*}
&e_1=\small 
\begin{pmatrix}
0&0&0\\0&0&1\\0&0&0
\end{pmatrix},\quad
e_2=\small 
\begin{pmatrix}
0&1&0\\0&0&0\\0&-1&0
\end{pmatrix},\quad
e_3=\small 
\begin{pmatrix}
0&0&0\\-1&0&0\\0&0&0
\end{pmatrix},\\
&y_1=\small 
\begin{pmatrix}
0&0&0\\0&0&0\\0&0&0
\end{pmatrix},\quad
y_2=\small 
\begin{pmatrix}
-1&0&0\\0&-1&0\\0&0&1
\end{pmatrix},\quad
y_3=\small 
\begin{pmatrix}
-2&0&0\\0&0&0\\0&0&0
\end{pmatrix},\quad
y_4=\small 
\begin{pmatrix}
-1&0&0\\0&-1&0\\0&0&-1
\end{pmatrix}. 
\end{align*}
The trick for $e_i$ is to take $+1$'s above the diagonal and $-1$'s below the diagonal in the same positions as the nonzero non-diagonal entries of $s_i$, and $0$'s everywhere else. To get $y_{j+1}$ we add the diagonal entries of $y_j$ and $s_j$.

$\bullet\;$ It remains to consider the partition $(2^2)$. The underlying irreducible $S_4$-representation is $2$-dimensional. We would be forced by the braid relations, by $s_i^2=1$ and $e_i^2=0$, by $e_ie_{i\pm 1}e_i=-e_i$, and the inductive formula for $y_j$, to choose matrices for the generators of $\sv_4$ up to conjugation by:
\begin{align*}
&s_1=\small 
\begin{pmatrix}
-1&-1\\0&1
\end{pmatrix},\quad
s_2=\small 
\begin{pmatrix}
1&0\\-1&-1
\end{pmatrix},\quad
s_3=s_1, \\
&e_1=\small 
\begin{pmatrix}
0&1\\
0&0
\end{pmatrix},\quad
e_2=\small 
\begin{pmatrix}
0&0\\
-1&0
\end{pmatrix},\quad
e_3=e_1, \\
&y_1=\diag(0,0),\quad y_2=\diag(-1,1),\quad y_3=y_1,\quad y_4=y_2. 
\end{align*}
However, this is not an $\sv_4$-representation because then other relations fail: $s_1e_3\neq e_3s_1$ and $e_2y_4\neq y_4e_2$, for example. So $(2^2)$ does not yield an $\sv_4$- or $\sBr_4$-representation with nonzero action of the $e_i$'s. \\

We know from \cite{Coulembier} that the irreducible representations of $\sBr_4$ are in bijection with $\{(4),(3,1),(2^2),(2,1^2),(1^4),(2),(1^2)\}$. The irreducible representation $\LL(\lambda)$ for $\lambda$ a partition of $4$ is given by the irreducible representation $S(\lambda)$ of $\K S_n$ with $e_i$ acting by $0$ for all $i=1,2,3$; the Jucys--Murphy elements $Y_j$ then act on these representations by the Jucys--Murphy elements of $\K S_4$ so they act semisimply. It must therefore hold that $\{\CC_0(3,1),\CC_0(2,1^2)\}=\{\LL(2),\LL(1^2)\}$. We conclude that $Y_1,Y_2,Y_3,Y_4$ act semisimply on every irreducible representation of $\sBr_4$.

\subsection{The exterior powers of the standard representation of $S_n$}
Theorem \ref{reflection rep} and the $n=4$ examples of Section \ref{A4} generalize to all exterior powers of the standard representation of $S_n$, providing an explicit construction of a family of irreducible calibrated $\sBr_n$-representations. For $1\leq k\leq n-2$, the $\K S_n$-representation $\Lambda^k V_n$ extends to a calibrated representation of $\sBr_n$ and $\sv_n$ on which the $e_i$'s act by nonzero matrices. Although the underlying $S_n$-representation is labeled by the partition $(n-k,1^k)$, the representations constructed in the following theorem {\em do not} coincide with the irreducible $\sBr_n$-representations labeled $\LL(n-k,1^k)$ in the classification given in \cite{Coulembier} of irreducible $\sBr_n$-modules.
\begin{theorem}\label{exterior powers}
Let $V_n$ be the $(n-1)$-dimensional standard representation of $S_n$ and let $1\leq  k\leq n-2$. 
 Then the irreducible $S_n$-representation $\Lambda^k V_n$ admits a nonzero action by every $e_i$ and a semisimple action by every $y_j$ for $1\leq i\leq n-1$ and $1\leq j\leq n$, with $y_1$ acting by the scalar matrix $\alpha\Id$ for some $\alpha\in\K$, endowing $\Lambda^k V_n$ with the structure of an irreducible calibrated $\sv_n$-representation which we denote by $\CC_\alpha(\Lambda^k V_n)$. When $\alpha=0$ then $\CC_0(\Lambda^k V_n)$ is an irreducible calibrated $\sBr_n$-representation .
\end{theorem}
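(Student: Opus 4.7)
The plan is to extend the $S_n$-action on $\Lambda^k V_n$ to an $\sv_n$-action by writing down explicit matrices for $e_i$ and $y_j$ in the wedge-product basis $\{v_{i_1}\wedge\cdots\wedge v_{i_k}\mid 1\leq i_1<\cdots<i_k\leq n-1\}$, where $v_1,\dots,v_{n-1}$ is the basis of $V_n$ from Theorem \ref{reflection rep}, and then to verify the defining relations of Definition \ref{def svn}. First, compute the matrices of $s_i$ on the wedge basis by functoriality. One observes that every diagonal entry of $s_i$ is $\pm 1$ and that the off-diagonal entries of $s_i$ link only pairs of wedge vectors whose index sets differ by swapping $i\leftrightarrow i+1$. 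Then define $e_i$ to have the same off-diagonal support as $s_i$, with the sign convention chosen so that $(e_i)_{\ell,m}\neq 0$ forces $(y_i-y_{i+1})_{\ell,\ell}=1$; this is the convention forced by Lemma \ref{basicbitch}, and generalises the pattern visible in Theorem \ref{reflection rep} and in Section \ref{A4}. Finally, set $y_1:=\alpha\Id$ and $y_{j+1}:=s_jy_js_j+s_j+e_j$.

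Several relations are then automatic or near-automatic. Relation $(\vv_1)$ and the braid relations $(\vv_3)$(i),(ii) hold because $\Lambda^k V_n$ is already an $S_n$-representation (Lemma \ref{checking Vn a rep of Sn} together with functoriality of $\Lambda^k$); $(\vv_7)$ holds by the very definition of $y_{j+1}$. Because $e_i$ and the off-diagonal part of $s_i$ share supports, Lemma \ref{evalues closed form} applies and tells us that each $y_j$ is in fact diagonal with $(y_{j+1})_{m,m}=(y_j)_{m,m}+(s_j)_{m,m}$, from which the commutativity relation $(\vv_2)$(iv) is immediate. Relations $(\vv_6)$ and $(\vv_8)$ are then direct matrix-entry checks using the explicit form of $e_i$ and the eigenvalue formula. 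The ``far away'' relations $(\vv_2)$(i)--(iii) and $(\vv_3)$(iii) reduce to the observation that $e_i$ and $s_i$ only move the indices $i,i+1$ inside a wedge vector, while a diagonal entry of $y_j$ depends only on which indices appear.

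The main obstacle will be the triple-product relations $(\vv_4)$(i),(ii) and the mixed $(\vv_5)$ relations, which combine the nontrivial actions of $e_i$, $e_{i\pm 1}$, $s_i$, and $s_{i\pm 1}$. To handle these I would stratify the wedge basis according to the intersection $J\cap\{i-1,i,i+1,i+2\}$, where $J=\{i_1,\dots,i_k\}$ is the index set of a basis vector. Vectors in different strata are never linked by any of the relevant operators, so each relation decouples into finitely many local checks on blocks whose size is bounded independently of $n$ and $k$ (these blocks correspond to $\Lambda^j W$ for $W$ the span of a handful of the $v_i$'s). Each local check is a small matrix identity that can be done by hand or automated by a routine adaptation of the Mathematica code described in Appendix \ref{code}.

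Once all defining relations are verified, $\CC_\alpha(\Lambda^k V_n)$ is an $\sv_n$-representation, and its irreducibility is immediate: the underlying $\K S_n$-module $\Lambda^k V_n$ is already irreducible for $1\leq k\leq n-2$ (it is the hook representation $S(n-k,1^k)$), so any $\sv_n$-subrepresentation, being in particular $S_n$-stable, is either $0$ or the whole space. The generators $e_i$ act by nonzero matrices by construction. Finally, specialising $\alpha=0$ forces $y_1=0$, so the action factors through the surjection $\Pi_n:\sv_n\twoheadrightarrow \sBr_n$ of Lemma \ref{JM eval}, yielding the desired irreducible calibrated $\sBr_n$-representation.
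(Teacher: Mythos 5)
Your overall strategy mirrors the paper's: use the $S_n$-action on $\Lambda^k V_n$ for free, choose $e_i$ with the same off-diagonal support as $s_i$ (signs $+1$ above, $-1$ below the diagonal in the paper's normalization), arrange the $y_j$'s to be diagonal, and check the remaining relations entry-by-entry. There is, however, a genuine circularity in the step where you conclude that each $y_j$ is diagonal. You define $y_{j+1}:=s_jy_js_j+s_j+e_j$ and then invoke Lemma \ref{evalues closed form}, but that lemma is a statement \emph{about} calibrated $\sBr_n$-representations: it \emph{assumes} that the $y_j$'s already act by diagonal matrices in the chosen basis and then computes the entries. You cannot use it to deduce diagonality of the very matrices in question. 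The same problem afflicts your appeal to Lemma \ref{basicbitch} to pin down the signs of $e_i$: the formula $(e_i)_{\ell,m}=((y_i)_{\ell,\ell}-(y_{i+1})_{m,m})(s_i)_{\ell,m}$ is a consequence of having a calibrated representation, and moreover $y_{i+1}$ is built from $e_i$ in your recursion, so the sign-fixing is not well-posed as a definition. The paper avoids all of this by going the other direction: it defines $y_{j+1}:=y_j+\diag(s_j)$ outright, so diagonality and $(\vv_2)$(iv) are immediate, and then it \emph{verifies} $(\vv_7)$ by proving the matrix identity $s_jt_{j-1}s_j+s_j+e_j=t_{j-1}+t_j$ (with $t_a=\diag(s_a)$), i.e., that the off-diagonal contributions of $s_jy_js_j$, $s_j$, and $e_j$ cancel. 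That identity is exactly the content your argument needs but does not supply; if you keep your recursive definition of $y_{j+1}$, you must establish it directly to show the off-diagonal part of $y_{j+1}$ vanishes.

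A secondary caution on your stratification plan: it is sound in spirit, and it is essentially what the paper's entry-by-entry computations for $e_ie_{i+1}e_i$, $e_ie_{i+1}s_i$, $e_1s_j$, and $e_1e_3$ amount to. But even the ``far-away'' commutations $s_ie_j=e_js_i$ with $|i-j|=2$ are not free: $s_i$ can send $i\mapsto i+1$ in a wedge that also contains $i+2$, while $e_{i+2}$ sends $i+2\mapsto i+1$, so collisions producing a repeated index (hence zero) arise and must be checked to occur compatibly on both sides. This is exactly the content of the paper's explicit check that $e_1s_j=s_je_1$ for $j\geq 3$.
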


\begin{proof}
The $\K S_n$-representation $\Lambda^k V_n$ is irreducible with natural basis given by the wedges $\mathcal{B}_k:=\{\overline{v}_{a_1}\wedge \overline{v}_{a_2}\wedge\dots\wedge \overline{v}_{a_k}\mid 1\leq a_1<a_2<\ldots<a_k\leq n-1\}$ where $\{\overline{v}_1,\ldots,\overline{v}_{n-1}\}$ is the basis of $V_n$ used in Theorem \ref{reflection rep}; the dimension of $\Lambda^kV$ is ${n-1\choose k}$.  We order the basis elements  lexicographically. Write the $(\ell,m)$'th matrix entry of $X$ by $X(\ell,m)$ instead of $X_{\ell,m}$ so that it is more visible. The representation $\CC_a(\Lambda^k V_n)$ is the ${n-1\choose k}$-dimensional $\K$-vector space equipped with the following actions of the generators of $\sv_n$, given as matrices in the basis $\mathcal{B}_k$.

\underline{The matrix for $s_i$}.\\
 The matrix for $s_i$ acting on $\Lambda^k V_n$ for $1\leq i\leq n-1$ has $0$'s except for the following nonzero entries:
\begin{align*}
&\hbox{Diagonal entries: }\\&s_i(a_1\wedge\dots\wedge a_k,\;a_1\wedge\dots\wedge a_k )=\begin{cases}-1\quad\hbox{ if }a_m=i\hbox{ for some }1\leq m\leq k, \\ \:\:\:\:1\quad\hbox{ otherwise. } \end{cases}\\
&\hbox{Above-diagonal entries: }\\
& s_i(i\wedge a_2\wedge\dots\wedge a_k, \;(i+1)\wedge a_2\wedge\dots\wedge a_k)=-1\quad \mbox{for } i+2\leq a_2<\ldots<a_k, \\
& s_i(a_1\wedge i\wedge\dots\wedge a_k,\;a_1\wedge (i+1)\wedge\dots\wedge a_k)=-1\quad \mbox{for }a_1\leq i-1,\;i+2\leq a_3<\ldots<a_k,\\
&\qquad \qquad\qquad \qquad \vdots\\
& s_i(a_1\wedge a_2\wedge\dots\wedge i,\;a_1\wedge a_2\wedge\dots\wedge (i+1))=-1\quad \mbox{for }a_1<a_2<\ldots<a_{k-1}\leq i-1.\\
&\hbox{Below-diagonal entries: }\\
&s_i(i\wedge a_2\wedge\dots\wedge a_k,\;(i-1)\wedge a_2\wedge\dots\wedge a_k)=-1\quad \mbox{for } i+1\leq a_2<\ldots<a_k, \\
&s_i(a_1\wedge i\wedge\dots\wedge a_k,\;a_1\wedge(i-1) \wedge\dots\wedge a_k)=-1\quad \mbox{for }a_1\leq i-2,\;i+1\leq a_3<\ldots<a_k, \\
&\qquad \qquad\qquad \qquad \vdots\\
&s_i(a_1\wedge a_2\wedge\dots\wedge i ,\;a_1\wedge a_2\wedge\dots\wedge (i-1))=-1\quad\mbox{for } a_1<a_2<\ldots<a_{k-1}\leq i-2.\\
\end{align*}

\underline{The matrix for $e_i$}.\\
The matrix for $e_i$ acting on $\Lambda^k V_n$ for $1\leq i\leq n-1$ has $0$'s on the diagonal, the same entries as $s_i$ below the diagonal, and minus the entries of $s_i$ above the diagonal. That is, all the entries of $e_i$ are $0$ except for:\\
\begin{align*}
&\hbox{Above-diagonal entries: }\\
& e_i(i\wedge a_2\wedge\dots\wedge a_k,\;(i+1)\wedge a_2\wedge\dots\wedge a_k)=1\quad\mbox{for } i+2\leq a_2<\ldots<a_k, \\
& e_i(a_1\wedge i\wedge\dots\wedge a_k,\;a_1\wedge (i+1)\wedge\dots\wedge a_k)=1\quad \mbox{for }a_1\leq i-1,\;i+2\leq a_3<\ldots<a_k, \\
&\qquad \qquad\qquad \qquad \vdots\\
& e_i(a_1\wedge a_2\wedge\dots\wedge i,\;a_1\wedge a_2\wedge\dots\wedge (i+1))=1\quad \mbox{for }a_1<a_2<\ldots<a_{k-1}\leq i-1. \\
&\hbox{Below-diagonal entries: }\\
&e_i(i\wedge a_2\wedge\dots\wedge a_k,\;(i-1)\wedge a_2\wedge\dots\wedge a_k)=-1\quad \mbox{for }i+1\leq a_2<\ldots<a_k, \\
&e_i(a_1\wedge i\wedge\dots\wedge a_k,\;a_1\wedge(i-1) \wedge\dots\wedge a_k)=-1\quad \mbox{for }a_1\leq i-2,\;i+1\leq a_3<\ldots<a_k, \\
&\qquad \qquad\qquad \qquad \vdots\\
&e_i(a_1\wedge a_2\wedge\dots\wedge i ,\;a_1\wedge a_2\wedge\dots\wedge (i-1))=-1\quad \mbox{for }a_1<a_2<\ldots<a_{k-1}\leq i-2. \\
\end{align*}

\underline{The matrix for $y_j$}.\\
The matrix for $y_j$ is defined recursively by:
\begin{align*}
y_1&=\alpha\Id_{n-1\choose k},  \\
y_{j+1}&=y_j+\diag(s_j)\quad \hbox{ for }1\leq j\leq n-1.
\end{align*}
In closed form, $y_{j+1}=\alpha\Id+\sum_{m=1}^{j}\diag(s_j)$.

We have to check that the matrices defined satisfy the relations of $\sv_n$ in Definition \ref{def svn}. It is clear that the relations involving $y_j$ hold for $\alpha\in\K$ if and only if they hold in the case $\alpha=0$. For the rest of the proof, we therefore consider only the case $\alpha=0$. In the case $\alpha=0$, the representation factors through $\sBr_n$ so we really only need to check the relations defining the subalgebra $\sBr_n$ generated by $e$'s and $s$'s, and that the formula we have declared for the matrix $y_{j+1}$ satisfies $y_{j+1}=s_jy_js_j+s_j+e_j$, the defining relation of the Jucys-Murphy elements. These are relations \ref{def svn}($\vv_1$),($\vv_2$)(i),(ii),($\vv_3$)(i),(ii),($\vv_4$),($\vv_5$)(i),(iii),(v) by Remark \ref{less relns},($\vv_6$), and the repackaging of ($\vv_7$) by Remark \ref{better reln}. By Lemmas \ref{checking Vn a rep of Sn} and \ref{checking it's the reflection rep}, $V_n$ is the standard representation of $S_n$. Then $\Lambda^k V_n$ is an irreducible representation of $S_n$ \cite{FultonHarris}, and we wrote the formula for $s_i$ by computing the action of $s_i$ on $\Lambda^k V_n$ using the action of $s_i$ on $V_n$, so it must hold that $s_i^2=1$, $s_is_{i+1}s_i=s_{i+1}s_is_{i+1}$, and $s_js_i=s_is_j$ if $|i-j|>1$. This puts ($\vv_1$), ($\vv_3$)(i),(ii) to rest. Moreover, $\CC_\alpha(\Lambda^k V_n)$ is irreducible if it is well-defined as an $\sv_n$-representation, since $\Lambda^k V_n$ is an irreducible representation of $S_n$.

If the matrix $e_i$ has entries in a row $r$ then it has $0$'s in column $r$, and vice versa; more precisely, $e_i$ has nonzero entries only in rows involving $i$ in a wedge power, and only in columns involving $i-1$ or $i+1$ in a wedge power but not $i$. It follows immediately that ($\vv_6$) $e_i^2=0$ holds.

Write $s_i=t_i+p_i$ where $t_i=\diag(s_i)$ and $p_i=s_i-t_i$ is the off-diagonal part of $s_i$. Then $p_i=-|e_i|$ where $|e_i|$ is the matrix with entries $|(e_i)_{r,s}|$. As in the proof of Lemma \ref{checking Vn a rep of Sn}, $t_ip_i=-p_i$ and $p_it_i=t_i$, $t_ie_i=-e_i$ and $e_it_i=e_i$, and $p_ie_i=0=e_ip_i$. Then $s_ie_i=t_ie_i=-e_i$ and $e_is_i=e_it_i=e_i$, so $(\vv_5)$(i) holds.

Now we check $(\vv_4)$. First, let us compute the nonzero entries of $e_ie_{i+1}$:
\begin{align*}
&\hbox{Diagonal entries:}\\
&e_ie_{i+1}(a_1\wedge\dots\wedge a_r\wedge i\wedge a_{r+2}\wedge\dots\wedge a_k,\; a_1\wedge\dots\wedge a_r\wedge i\wedge a_{r+2}\wedge\dots\wedge a_k)=-1\\
&\qquad\hbox{ where }a_r\leq i-1\hbox{ and }a_{r+2}\geq i+2. \\
&\hbox{Above-diagonal entries:}\\
&e_ie_{i+1}(a_1\wedge\dots\wedge a_r\wedge i\wedge a_{r+2}\wedge\dots\wedge a_k,\;a_1\wedge \dots\wedge a_r\wedge (i+2)\dots \wedge a_k)=1\\
&\qquad \hbox{ where }a_r\leq i-1\hbox{ and }a_{r+2}\geq i+3.  \\
&\hbox{Below-diagonal entries:}\\
&e_ie_{i+1}(a_1\wedge\dots\wedge a_r\wedge i\wedge (i+1)\wedge\dots\wedge a_k,\;a_1\wedge\dots\wedge a_r\wedge(i-1)\wedge i\wedge\dots\wedge a_k)=1\\
&\qquad\hbox{ where }a_r\leq i-2\hbox{ and }a_{r+3}\geq i+2,  \\
&e_ie_{i+1}(a_1\wedge\dots\wedge a_r\wedge i\wedge(i+1)\wedge\dots\wedge a_k,\; a_1\wedge \dots\wedge a_r\wedge(i-1)\wedge(i+2)\wedge\dots\wedge a_k)=-1\\
&\qquad\hbox{ where }a_r\leq i-2\hbox{ and }a_{r+3}\geq i+3. 
\end{align*}
In order to compute $e_ie_{i+1}e_i$ we write $e_ie_{i+1}$ as the sum of its four types of nonzero entries as above and compute their products with $e_i$ one by one. The diagonal entry $-1$ of $e_ie_{i+1}$ in row $a_1\wedge\dots\wedge a_r\wedge i\wedge a_{r+2}\wedge\dots\wedge a_k$ multiplies the entries of $e_i$ in row $a_1\wedge\dots\wedge a_r\wedge i\wedge a_{r+2}\wedge\dots\wedge a_k$ by $-1$, which gives us all the nonzero entries of $-e_i$ except for those with $a_{r+2}=i+1$ which occur only for the lower diagonal entries; these are obtained from:
\begin{align*}
1=&1\times 1\\
=&e_ie_{i+1}(a_1\wedge\dots\wedge a_r\wedge i\wedge (i+1)\wedge\dots\wedge a_k,\;a_1\wedge\dots\wedge a_r\wedge (i-1)\wedge i\wedge \dots\wedge a_k )\\
&\times e_i( a_1\dots\wedge a_r\wedge (i-1)\wedge i\wedge\dots\wedge a_k,\;a_1\wedge\dots\wedge a_r\wedge(i-1)\wedge(i+1)\wedge\dots\wedge a_k)\\
=&e_ie_{i+1}e_i(a_1\wedge \dots\wedge a_r\wedge i\wedge (i+1)\wedge\dots\wedge a_k,\; a_1\wedge\dots \wedge a_r \wedge(i-1)\wedge(i+1)\wedge\dots\wedge a_k)
\end{align*}
All other products of off-diagonal entries are $0$. This shows $e_ie_{i+1}e_i=-e_i$. 
A similar calculation shows that $e_{i+1}e_ie_{i+1}=-e_{i+1}$.

Next, we check relation $(\vv_5)$(iii). We have to show that $e_ie_{i+1}s_i=-e_is_{i+1}$. As earlier in the proof we compute products with $s_i$ by breaking it into the sum of its diagonal part $t_i$ and off-diagonal part $p_i$. Computing $e_ie_{i+1}s_i$ we have:
\begin{align*}
&e_ie_{i+1}t_i(a_1\wedge\dots\wedge a_r\wedge i\wedge a_{r+2}\wedge\dots\wedge a_k,\; a_1\wedge\dots\wedge a_r\wedge i\wedge a_{r+2}\wedge\dots\wedge a_k)=1\\
&\qquad\hbox{ where }a_r\leq i-1\hbox{ and }a_{r+2}\geq i+2.  \\
&\hbox{Above-diagonal entries:}\\
&e_ie_{i+1}t_i(a_1\wedge\dots\wedge a_r\wedge i\wedge a_{r+2}\wedge\dots\wedge a_k,\;a_1\wedge \dots\wedge a_r\wedge (i+2)\dots \wedge a_k)=1\\
&\qquad \hbox{ where }a_r\leq i-1\hbox{ and }a_{r+2}\geq i+3.  \\
&\hbox{Below-diagonal entries:}\\
&e_ie_{i+1}t_i(a_1\wedge\dots\wedge a_r\wedge i\wedge (i+1)\wedge\dots\wedge a_k,\;a_1\wedge\dots\wedge a_r\wedge(i-1)\wedge i\wedge\dots\wedge a_k)=-1\\
&\qquad\hbox{ where }a_r\leq i-2\hbox{ and }a_{r+3}\geq i+2,  \\
&e_ie_{i+1}t_i(a_1\wedge\dots\wedge a_r\wedge i\wedge(i+1)\wedge\dots\wedge a_k,\; a_1\wedge \dots\wedge a_r\wedge(i-1)\wedge(i+2)\wedge\dots\wedge a_k)=-1\\
&\qquad\hbox{ where }a_r\leq i-2\hbox{ and }a_{r+3}\geq i+3. 
\end{align*}
and by a computation similar to that used in the check of $e_ie_{i+1}e_i=-e_i$ we have
\begin{align*}
&e_ie_{i+1}p_i(a_1\wedge\dots\wedge a_r\wedge i\wedge\dots\wedge a_k,\;a_1\wedge \dots\wedge a_r\wedge (i+1)\wedge\dots\wedge a_k)=1, \\
&e_ie_{i+1}p_i(a_1\wedge\dots\wedge a_r\wedge i\wedge a_{r+2}\wedge\dots\wedge a_k,\;a_1\wedge \dots\wedge a_r\wedge (i-1)\wedge a_{r+2}\wedge\dots\wedge a_k)=1 \\&\qquad \hbox{ if }a_r\leq i-1\hbox{ and }a_{r+2}\geq i+2, 
\end{align*}
whereas
$$e_ie_{i+1}p_i(a_1\wedge\dots\wedge a_r\wedge i\wedge (i+1)\wedge\dots\wedge a_k,\;a_1\wedge\dots\wedge a_r\wedge (i-1)\wedge(i+1)\wedge\dots\wedge a_k)=-1.$$
Computing $-e_is_{i+1}=-e_ip_{i+1}-e_it_{i+1}=e_i|e_{i+1}|-e_it_{i+1}$, we find that $e_i|e_{i+1}|=e_ie_{i+1}t_i$; while $-e_it_{i+1}$ has entries in the same places as in $e_i$ with $1$'s except in those below-diagonal entries where the wedge labeling the column contains $i+1$, and thus we obtain that $e_ie_{i+1}p_i=-e_it_{i+1}$. It follows that $e_ie_{i+1}s_i=-e_is_{i+1}$. A similar calculation verifies $(\vv_5)$(v).

Next, we check the relations in $(\vv_2)$(i),(ii). Since $\Lambda^k V_n$ is an $S_n$-representation, the relations $s_is_j=s_js_i$ if $|i-j|>1$ are automatically satisfied. The calculation in the proof of Lemma \ref{cupcap ideal 1}(\ref{symm subrep}) shows that $e_{i+1}=s_is_{i+1}e_is_{i+1}s_i$ for all $i=1,\dots,n-2$. Thus for (i) it is enough to check
that $e_1s_j=s_je_1$ for all $j\geq 3$, and for (ii) it is then enough to check that $e_1e_3=e_3e_1$.

First, we check $e_1s_j=s_je_1$ for all $j\geq 3$.  The nonzero entries of $e_1$ are given by:
$$e_1(1\wedge a_2\wedge\dots\wedge a_k,\;2\wedge a_2\wedge\dots\wedge a_k)=1\hbox{ for all }3\leq a_3<\ldots<a_k. $$
The nonzero entries in the product $e_1s_j$ are given by:
\begin{align*}
-1&=1(-1)\\
   &=e_1(1\wedge a_2\wedge\dots \wedge a_r\wedge j\wedge a_{r+2}\wedge\dots\wedge a_k,\;2\wedge  a_2\wedge\dots \wedge a_r\wedge j\wedge a_{r+2}\wedge\dots\wedge a_k)\\
&\quad\quad \times s_j(2\wedge  a_2\wedge\dots \wedge a_r\wedge j\wedge a_{r+2}\wedge\dots\wedge a_k,\; 2\wedge  a_2\wedge\dots \wedge a_r\wedge (j+1)\wedge a_{r+2}\wedge\dots\wedge a_k)\\
&=e_1s_j(1\wedge a_2\wedge\dots \wedge a_r\wedge j\wedge a_{r+2}\wedge\dots\wedge a_k,\;2\wedge  a_2\wedge\dots \wedge a_r\wedge (j+1)\wedge a_{r+2}\wedge\dots\wedge a_k)\\
&\qquad\qquad\hbox{ for all }j\geq 3,\; a_r\leq j-1,\;a_{r+2}\geq j+2,  \\
-1&=1(-1) \\ 
&=e_1(1\wedge a_2\wedge\dots \wedge a_r\wedge j\wedge a_{r+2}\wedge\dots\wedge a_k,\;2\wedge  a_2\wedge\dots \wedge a_r\wedge j\wedge a_{r+2}\wedge\dots\wedge a_k)\\
&\quad\quad \times s_j(2\wedge  a_2\wedge\dots \wedge a_r\wedge j\wedge a_{r+2}\wedge\dots\wedge a_k,\; 2\wedge  a_2\wedge\dots \wedge a_r\wedge (j-1)\wedge a_{r+2}\wedge\dots\wedge a_k)\\
&=e_1s_j(1\wedge a_2\wedge\dots \wedge a_r\wedge j\wedge a_{r+2}\wedge\dots\wedge a_k,\;2\wedge  a_2\wedge\dots \wedge a_r\wedge (j-1)\wedge a_{r+2}\wedge\dots\wedge a_k)\\
&\qquad\qquad\hbox{ for all }j\geq 4,\; a_r\leq j-2,\;a_{r+2}\geq j+1,  \\
e_1s_j&(1\wedge a_2\wedge\dots\wedge a_k,\;2\wedge a_2\wedge \dots\wedge a_k) \\
&=e_1(1\wedge a_2\wedge\dots\wedge a_k,\;2\wedge a_2\wedge \dots\wedge a_k)s_j(2\wedge a_2\wedge \dots\wedge a_k,\;2\wedge a_2\wedge \dots\wedge a_k)\\
&=\begin{cases} 
-1&\hbox{ if }j\in\{a_2,\dots,a_k\}, \\
\:\:\:1&\hbox{ else}. 
\end{cases}
\end{align*}
The nonzero entries in the product $s_je_1$ are given by:
\begin{align*}
-1&=(-1)1 \\ 
&=s_j(1\wedge  a_2\wedge\dots \wedge a_r\wedge j\wedge a_{r+2}\wedge\dots\wedge a_k,\; 1\wedge  a_2\wedge\dots \wedge a_r\wedge (j+1)\wedge a_{r+2}\wedge\dots\wedge a_k)\\
&\quad\quad \times e_1(1\wedge a_2\wedge\dots \wedge a_r\wedge j\wedge a_{r+2}\wedge\dots\wedge a_k,\;2\wedge  a_2\wedge\dots \wedge a_r\wedge j\wedge a_{r+2}\wedge\dots\wedge a_k)\\
&=s_je_1(1\wedge a_2\wedge\dots \wedge a_r\wedge j\wedge a_{r+2}\wedge\dots\wedge a_k,\;2\wedge  a_2\wedge\dots \wedge a_r\wedge (j+1)\wedge a_{r+2}\wedge\dots\wedge a_k)\\
&\qquad\qquad\hbox{ for all }j\geq 3,\; a_r\leq j-1,\;a_{r+2}\geq j+2,  \\
-1&=(-1)1 \\
&=s_j(1\wedge  a_2\wedge\dots \wedge a_r\wedge j\wedge a_{r+2}\wedge\dots\wedge a_k,\; 1\wedge  a_2\wedge\dots \wedge a_r\wedge (j-1)\wedge a_{r+2}\wedge\dots\wedge a_k)\\
&\quad\quad \times e_1(1\wedge a_2\wedge\dots \wedge a_r\wedge j\wedge a_{r+2}\wedge\dots\wedge a_k,\;2\wedge  a_2\wedge\dots \wedge a_r\wedge j\wedge a_{r+2}\wedge\dots\wedge a_k)\\
&=s_je_1(1\wedge a_2\wedge\dots \wedge a_r\wedge j\wedge a_{r+2}\wedge\dots\wedge a_k,\;2\wedge  a_2\wedge\dots \wedge a_r\wedge (j-1)\wedge a_{r+2}\wedge\dots\wedge a_k)\\
&\qquad\qquad\hbox{ for all }j\geq 4,\; a_r\leq j-2,\;a_{r+2}\geq j+1, \\
s_je_1&(1\wedge a_2\wedge\dots\wedge a_k,\;2\wedge a_2\wedge \dots\wedge a_k) \\
&=s_j(1\wedge a_2\wedge \dots\wedge a_k,\;1\wedge a_2\wedge \dots\wedge a_k)
e_1(1\wedge a_2\wedge\dots\wedge a_k,\;2\wedge a_2\wedge \dots\wedge a_k)\\
&=\begin{cases} 
-1&\hbox{ if }j\in\{a_2,\dots,a_k\}, \\
\:\:\:1&\hbox{ else}. 
\end{cases}
\end{align*}
So $e_1s_j=s_je_1$ for any $j\geq 3$. By the remarks above, this implies $(\vv_2)$(i) holds.

Let's check that $e_1e_3=e_3e_1$. The nonzero entries of $e_3$ whose row entry begins with a $2$ or whose column entry begins with a $1$ are given by:
\begin{alignat*}{2}
&e_3(2\wedge 3\wedge a_3\wedge\dots\wedge a_k,\;2\wedge 4\wedge a_3\wedge\dots\wedge a_k)=1\quad &&\hbox{ for all }5\leq a_3<\ldots<a_k, \\
&e_3(1\wedge 3\wedge a_3\wedge\dots\wedge a_k,\;1\wedge 4\wedge a_3\wedge\dots\wedge a_k)=1\quad &&\hbox{ for all }5\leq a_3<\ldots<a_k, \\
&e_3(1\wedge 2\wedge 3\wedge a_4\wedge\dots\wedge a_k,\;1\wedge 2\wedge 4\wedge\dots\wedge a_k)=1\quad &&\hbox{ for all }5\leq a_4<\ldots<a_k, \\
&e_3(1\wedge 3\wedge\dots\wedge a_k,\;1\wedge 2\wedge\dots\wedge a_k)=-1\quad &&\hbox{ for all }4\leq a_2<\ldots<a_k. 
\end{alignat*}
However, because $a_2\geq 3 $ in the nonzero entries of $e_1$, the last two types of term for $e_3$ never match up to with the rows or columns of the nonzero entries of $e_1$ since $e_1$ has no entries in rows or columns labeled $1\wedge 2\wedge\cdots$. Computing the nonzero entries of $e_1e_3$ and $e_3e_1$ we get for all $5\leq a_3<\ldots<a_k$:
\begin{align*}
1&=e_1(1\wedge3\wedge a_3\wedge\dots\wedge a_k,\;2\wedge 3\wedge a_3\wedge\dots\wedge a_k)e_3(2\wedge 3\wedge a_3\wedge \dots\wedge a_k,\;2\wedge 4\wedge a_3\wedge\dots\wedge a_k) \\
&=e_1e_3(1\wedge3\wedge a_3\wedge\dots\wedge a_k,\;2\wedge 4\wedge a_3\wedge\dots\wedge a_k), \\
1&=e_3(1\wedge 3\wedge a_3\wedge \dots\wedge a_k,\;1\wedge 4\wedge a_3\wedge \dots \wedge a_k)e_1(1\wedge 4\wedge a_3\wedge \dots\wedge a_k,\;2\wedge 4\wedge a_3\wedge \dots\wedge a_k)\\
&=e_3e_1(1\wedge 3\wedge a_3\wedge \dots\wedge a_k,\;2\wedge 4\wedge a_3\wedge\dots\wedge a_k). 
\end{align*}
So $e_1e_3=e_3e_1$. By the remarks above, this implies $(\vv_2)$(ii) holds.

The last thing to check is $(\vv_7)$, as the single relation $y_{j+1}=s_jy_js_j+s_j+e_j$. Break $s_j$ into the sum of its diagonal and off-diagonal parts: $s_j=t_j+p_j$. By a check similar to the check that $s_j$ and $e_1$ commute, $t_i$ and $p_j$ commute for all $1\leq i\leq j-2$. Obviously $t_j$ and $t_i$ commute since they are diagonal matrices. So $t_i$ and $s_j$ commute for all $i\leq j-2$. We then have
$$s_jy_js_j+s_j+e_j=s_j\left(\sum_{i=1}^{j-1} t_i\right)s_j+s_j+e_j=\left(\sum_{i=1}^{j-2} t_i\right)+s_jt_{j-1}s_j+s_j+e_j, $$
and so the relation $y_{j+1}=s_jy_js_j+s_j+e_j$ holds if and only if $s_jt_{j-1}s_j+s_j+e_j=t_{j-1}+t_j$. Let's check this. The diagonal matrix $t_{j-1}t_j=t_jt_{j-1}$ has entries given by:
$$t_{j-1}t_j(a_1\wedge\dots\wedge a_k,a_1\wedge\dots\wedge a_k)
=\begin{cases}-1 &\hbox{ if }j-1\hbox{ or }j\in\{a_1,\dots,a_k\}\hbox{ but not both},\\ 
\:\:\:1 &\hbox{ else}.
\end{cases}\\$$
We have
$$s_jt_{j-1}s_j=(t_j+p_j)t_{j-1}(t_j+p_j)=t_{j-1}+p_j(t_{j-1}t_j)+(t_{j-1}t_j)p_j+p_jt_{j-1}p_j,$$
and an argument similar to that used to verify $e_j^2=0$ also shows $p_jt_{j-1}p_j=0$.
We compute:
\begin{align*}
&p_jt_{j-1}t_j(a_1\wedge\dots\wedge a_r\wedge j\wedge\dots\wedge a_k,\;a_1\wedge\dots\wedge a_r\wedge (j+1)\wedge\dots\wedge a_k)\\
&\qquad\qquad\qquad\qquad\qquad=
\begin{cases}(-1)(-1)=1&\hbox{ if }a_r=j-1, \\
(-1)(1)=-1&\hbox{ if }a_r\neq j-1, 
\end{cases}\\
&p_jt_{j-1}t_j(a_1\wedge\dots\wedge a_r\wedge j\wedge\dots\wedge a_k,\;a_1\wedge\dots\wedge a_r\wedge (j-1)\wedge\dots\wedge a_k)=(-1)(-1)=1, 
\end{align*}
where there is no case distinction for the lower-diagonal term since in that case, $a_{r+2}>j$. On the other hand,
\begin{align*}
&t_{j-1}t_jp_j(a_1\wedge\dots\wedge a_r\wedge j\wedge\dots\wedge a_k,\;a_1\wedge\dots\wedge a_r\wedge(j+1)\wedge\dots\wedge a_k)=\\
&\qquad\qquad\qquad\qquad\qquad=
\begin{cases}(1)(-1)=-1&\hbox{ if }a_r=j-1,\\
(-1)(-1)=1&\hbox{ if }a_r\neq j-1,
\end{cases}\\
&t_{j-1}t_jp_j(a_1\wedge\dots\wedge a_r\wedge j\wedge\dots\wedge a_k,\;a_1\wedge\dots\wedge a_r\wedge (j-1)\wedge\dots\wedge a_k)=(-1)(-1)=1, 
\end{align*}
where there is no case distinction for the lower-diagonal term, this time because\\ $a_r<j-1$.
The only nonzero terms of $t_{j-1}t_{j}p_j+p_jt_{j-1}t_{j}$ are then:
\begin{align*}
(t_{j-1}t_{j}p_j+p_jt_{j-1}t_{j})&(a_1\wedge\dots\wedge a_r\wedge j\wedge\dots\wedge a_k,\;a_1\wedge\dots\wedge a_r\wedge (j-1)\wedge\dots\wedge a_k)=2\\
&\hbox{ for all }a_1<\ldots<a_r\leq j-2,\;j+1\leq a_r<\ldots<a_k, 
\end{align*}
and on the other hand, the only nonzero entries of $p_j+e_j$ are:
\begin{align*}
(p_j+e_j)&(a_1\wedge\dots\wedge a_r\wedge j\wedge\dots\wedge a_k,\;a_1\wedge\dots\wedge a_r\wedge (j-1)\wedge\dots\wedge a_k)=-2\\
&\hbox{ for all }a_1<\ldots<a_r\leq j-2,\;j+1\leq a_r<\ldots<a_k. 
\end{align*}
Therefore, we have:
$$s_jt_{j-1}s_j+s_j+e_j=t_{j-1}+p_jt_{j-1}t_j+t_{j-1}t_jp_j+t_j+p_j+e_j=t_{j-1}+t_j$$
and so 
$$y_{j+1}=\sum_{i=0}^{j} t_i=s_jy_js_j+s_j+e_j$$
as desired.

This finishes the check of the relations giving a calibrated representation of $\sBr_n$, and thus of a calibrated $\sv_n$-representation on which $y_1$ acts by a scalar $\alpha\in\K$.
\end{proof}

\begin{corollary}\label{JM evalues} The eigenvalues of $Y_j$ acting on $\Lambda^k V_n$ lie in $\{-k,-k+1,-k+2,\dots,n-k-2,n-k-1 \}$. Moreover, $Y_n$ acts by the scalar matrix $(n-1-2k)\Id_{n-1\choose k}$.
\end{corollary}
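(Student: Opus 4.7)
The plan is to read off the eigenvalues directly from the recursive formula for $y_j$ given in Theorem \ref{exterior powers}, and then bound them.

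First, I would set $\alpha=0$ (we are in the $\sBr_n$ case) so $y_1 = 0$. The recursion $y_{j+1} = y_j + \diag(s_j)$ immediately unwinds into the closed formula $Y_j = \sum_{i=1}^{j-1} \diag(s_i)$, which is precisely the form given by Lemma \ref{evalues closed form}. On the basis vector $\overline{v}_{a_1}\wedge \cdots \wedge \overline{v}_{a_k}$, the formula for $s_i$ in Theorem \ref{exterior powers} says the $i$-th diagonal entry of $s_i$ is $-1$ if $i \in \{a_1,\ldots,a_k\}$ and $+1$ otherwise. Setting $b_j := |\{1,\ldots,j-1\} \cap \{a_1,\ldots,a_k\}|$, the eigenvalue of $Y_j$ on this basis vector is therefore
\[
(j-1) - 2 b_j.
\]

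Second, I would specialize to $j=n$. Since every $a_r$ satisfies $1 \leq a_r \leq n-1$, we have $\{a_1,\ldots,a_k\} \subseteq \{1,\ldots,n-1\}$, so $b_n = k$ for \emph{every} basis vector. Hence $Y_n$ acts by the scalar $(n-1) - 2k$ on each basis vector, proving the second assertion.

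Third, for general $j$ the two constraints $b_j \leq k$ and $b_j \leq j-1$ together with $k - b_j \leq n - j$ (at most $n-j$ of the $a_r$ can lie in $\{j,\ldots,n-1\}$) give
\[
\max(0,\,k-n+j) \leq b_j \leq \min(k,\,j-1).
\]
Plugging these bounds into $(j-1) - 2b_j$ yields, after splitting into cases on which side of $n-k$ (resp.\ $k+1$) the integer $j$ falls on, an upper bound of $n-k-1$ and a lower bound of $-k$. Both inequalities reduce to trivial arithmetic once the cases are written out, so this step is routine.

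There is no real obstacle; the only thing to watch is the bookkeeping of which $i \in \{1,\ldots,j-1\}$ belong to $\{a_1,\ldots,a_k\}$, and the observation that this count is forced to equal $k$ precisely when $j = n$, which is what makes $Y_n$ act by a scalar.
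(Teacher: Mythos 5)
Your proposal is correct and follows essentially the same route as the paper: both unwind the recursion to $Y_j=\sum_{i=1}^{j-1}\diag(s_i)$, observe that the diagonal entry of $s_i$ on $\overline{v}_{a_1}\wedge\cdots\wedge\overline{v}_{a_k}$ is $-1$ exactly when $i\in\{a_1,\dots,a_k\}$ (so there are $k$ many $-1$'s and $n-1-k$ many $+1$'s in total), and deduce both the scalar action of $Y_n$ and the bounds from the partial sums. Your explicit bookkeeping with $b_j$ just makes precise the bound-checking that the paper leaves as a one-line remark.
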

\begin{proof} The $a$'th diagonal entry of $y_j$ is the sum of the $a$'th diagonal entries of $s_i$ for $1\leq i\leq j$. The diagonal entries of $s_i$ are $1$ and $-1$. For a given $s_i$, the diagonal entry corresponding to a wedge $a_1\wedge a_2\wedge \dots \wedge a_k$ is $-1$ if $a_r=i$ for some $r=1,\dots, k$ and $1$ otherwise. The number of times $-1$ occurs in the $a$'th position on the diagonals of all the $s_i$'s is therefore equal to $k$, the number of times $1$ occurs in the $a$'th position on the diagonals of all the $s_i$'s is equal to $n-k-1$. This gives the bounds on the eigenvalues, and since $y_n=\sum_{i=1}^{n-1}\diag(s_i)$ it follows that $y_n$ is a scalar matrix with scalar the total number of $1$'s minus the total number of $-1$'s which is $(n-k-1)-k$.
\end{proof}

\begin{example}
Let us continue with the example of $V_n$ when $n=5$. We saw that the standard representation $V_5$ of $S_5$ extends to a calibrated representation of $\sv_5$ with nonzero action of $e_i$ for $1\leq i\leq 4$. Let $\overline{v}_1$, $\overline{v}_2$, $\overline{v}_3$, $\overline{v}_4$ be the basis of simultaneous eigenvectors for $y_j$, $1\leq j\leq 5$, with respect to which we wrote the matrices in Theorem \ref{reflection rep}. By Theorem \ref{exterior powers},  $\Lambda^2 V_5$ and $\Lambda^3 V_5$ also extend to irreducible calibrated representations of $\sv_5$ with nonzero action of $e_i$. Here we give the explicit matrices for the actions of the generators of $\sv_5$ by matrices on these two representations.
\begin{enumerate}
\item 
Consider $\Lambda^2 V_5$ with basis $\cB_2=\{\overline{v}_{1}\wedge \overline{v}_{2},\;\overline{v}_{1}\wedge \overline{v}_{3},\;\overline{v}_{1}\wedge \overline{v}_{4},\;\overline{v}_{2}\wedge \overline{v}_{3},\;\overline{v}_{2}\wedge \overline{v}_{4},\;\overline{v}_{3}\wedge \overline{v}_{4}\}$. The matrices for $s_i$ are given by
\begin{align*}
&s_1=\small 
\begin{pmatrix}-1&0&0&0&0&0\\
0&-1&0&-1&0&0\\
0&0&-1&0&-1&0\\
0&0&0&1&0&0\\
0&0&0&0&1&0\\
0&0&0&0&0&1
\end{pmatrix},
\quad
s_2=\small 
\begin{pmatrix}
-1&-1&0&0&0&0\\
0&1&0&0&0&0\\
0&0&1&0&0&0\\
0&-1&0&-1&0&0\\
0&0&-1&0&-1&-1\\
0&0&0&0&0&1
\end{pmatrix},\\
&s_3=\small 
\begin{pmatrix}
1&0&0&0&0&0\\
-1&-1&-1&0&0&0\\
0&0&1&0&0&0\\
0&0&0&-1&-1&0\\
0&0&0&0&1&0\\
0&0&0&0&-1&-1
\end{pmatrix},
\quad
s_4=\small 
\begin{pmatrix}
1&0&0&0&0&0\\
0&1&0&0&0&0\\
0&-1&-1&0&0&0\\
0&0&0&1&0&0\\
0&0&0&-1&-1&0\\
0&0&0&0&0&-1
\end{pmatrix}. 
\end{align*}

The matrices for $e_i$ are given by:
\begin{align*}
&e_1=\small 
\begin{pmatrix}0&0&0&0&0&0\\
0&0&0&1&0&0\\
0&0&0&0&1&0\\
0&0&0&0&0&0\\
0&0&0&0&0&0\\
0&0&0&0&0&0
\end{pmatrix},
\quad
e_2=\small 
\begin{pmatrix}
0&1&0&0&0&0\\
0&0&0&0&0&0\\
0&0&0&0&0&0\\
0&-1&0&0&0&0\\
0&0&-1&0&0&1\\
0&0&0&0&0&0
\end{pmatrix},\\
&e_3=\small 
\begin{pmatrix}
0&0&0&0&0&0\\
-1&0&1&0&0&0\\
0&0&0&0&0&0\\
0&0&0&0&1&0\\
0&0&0&0&0&0\\
0&0&0&0&-1&0
\end{pmatrix},
\quad
e_4=\small 
\begin{pmatrix}
0&0&0&0&0&0\\
0&0&0&0&0&0\\
0&-1&0&0&0&0\\
0&0&0&0&0&0\\
0&0&0&-1&0&0\\
0&0&0&0&0&0
\end{pmatrix}. 
\end{align*}
The matrices for $y_j$ are given by:
\begin{align*}
& y_1=0,
\quad y_2=\diag(-1,-1,-1,1,1,1),\quad y_3=\diag(-2,0,0,0,0,2), \\
&y_4=\diag(-1,-1,1,-1,1,1),\quad y_5=0. 
\end{align*}

Observe that $e_i$ does not act by $0$ and that $e_ie_j$ also does not act by 0. This means that the irreducible is of the form $\LL(\lambda)$ with $\lambda$ a partition of $n-4=2$ \cite{Coulembier}.

\item Consider $\Lambda^3 V_5$ with basis $\cB_3=\{\overline{v}_1\wedge\overline{v}_2\wedge\overline{v}_3,\;\overline{v}_1\wedge\overline{v}_2\wedge\overline{v}_4,\;\overline{v}_1\wedge\overline{v}_3\wedge\overline{v}_4,\;\overline{v}_2\wedge\overline{v}_3\wedge\overline{v}_4\}$. The matrices for $s_i$ are given by:
\begin{align*}
&s_1=\small 
\begin{pmatrix}
-1&0&0&0\\0&-1&0&0\\0&0&-1&-1\\0&0&0&1
\end{pmatrix},\quad
s_2=\small 
\begin{pmatrix}
-1&0&0&0\\0&-1&-1&0\\0&0&1&0\\0&0&-1&-1
\end{pmatrix},\\
&s_3=\small 
\begin{pmatrix}
-1&-1&0&0\\0&1&0&0\\0&-1&-1&0\\0&0&0&-1
\end{pmatrix},\quad
s_4=\small 
\begin{pmatrix}
1&0&0&0\\-1&-1&0&0\\0&0&-1&0\\0&0&0&-1
\end{pmatrix}. 
\end{align*}
The matrices for $e_i$ are given by:
\begin{align*}
&e_1=\small 
\begin{pmatrix}
0&0&0&0\\0&0&0&0\\0&0&0&1\\0&0&0&0
\end{pmatrix},\quad
e_2=\small 
\begin{pmatrix}
0&0&0&0\\0&0&1&0\\0&0&0&0\\0&0&-1&0
\end{pmatrix},\\
&e_3=\small 
\begin{pmatrix}
0&1&0&0\\0&0&0&0\\0&-1&0&0\\0&0&0&0
\end{pmatrix},\quad
e_4=\small 
\begin{pmatrix}
0&0&0&0\\-1&0&0&0\\0&0&0&0\\0&0&0&0
\end{pmatrix}. 
\end{align*}
The matrices for $y_j$ are given by:
\begin{align*}&y_1=0,\quad y_2=\diag(-1,-1,-1,1),\quad y_3=\diag(-2,-2,0,0),\\
&\quad y_4=\diag(-3,-1,-1,-1),\quad y_5=\diag(-2,-2,-2,-2). \end{align*}
Observe that $e_i$ does not act by $0$ but $e_ie_j$ does for $|j-i|>1$. This means that the irreducible is of the form $\LL(\lambda)$ with $\lambda$ a partition of $n-2=4$ \cite{Coulembier}.
\end{enumerate}

\end{example}


\section{Pascal's triangle and a Bratteli diagram} \label{branching}
In this section we continue to follow the Okounkov--Vershik approach \cite{OkounkovVershik} in order to describe the spectrum and branching of the irreducible calibrated $\sBr_n$-representations $\CC_0(\Lambda^k V_n)$ constructed in the previous section. For similar techniques applied to diagram algebras, see for instance the representation theory of the partition algebra as treated by \cite{Enyang1},\cite{Enyang2},\cite{Bowmanetal}; see \cite{BowmanEnyangGoodman} for an axiomatic approach to towers of diagram algebras admitting a cellular basis.
\begin{definition} If $\LL$ is a calibrated representation, its \textit{spectrum} $\Spec\LL$ is the set of all $(\alpha_1,\dots,\alpha_n)\in\K^n$ such that there is a  $\overline{v}\in\LL$ with $Y_j\overline{v}=\alpha_j\overline{v}$ for $j=1,\dots,n$. 
\end{definition}
An element of $\Spec\LL$ corresponding to an eigenvector $\overline{v}$ is exactly the sequence of the $(a,a)$'th matrix entries of $y_1,\dots,y_n$ if $\overline{v}$ is the $a$'th basis vector for $\LL$. By considering the branching graph of the calibrated representations $\CC_0(\Lambda^k V_n)$, we will see how to easily describe $\Spec\CC_0(\Lambda^k V_n)$ in a completely explicit, combinatorial way.

\subsection{Restriction of $\CC_0(\Lambda^k V_n)$ from $\sBr_n$ to $\sBr_{n-1}$}
The underlying $\K S_n$-representation of $\CC_0(\Lambda^k V_n)$ is $\Lambda^k V_n$, which restricts from $S_n$ to $S_{n-1}$ as: $$\Res^{S_n}_{S_{n-1}}\Lambda^k V_n=\Lambda^{k-1} V_{n-1}\oplus \Lambda^k V_{n-1}.$$
We would like to know how the irreducible calibrated $\sBr_n$-representation $\CC_0(\Lambda^k V_n)$ decomposes into irreducible $\sBr_{n-1}$ representations when restricted to the copy of $\sBr_{n-1}\subset \sBr_{n}$ generated by $e_1,\dots, e_{n-2},s_1,\dots,s_{n-2}$ and whether the restricted representation is semisimple.

\begin{theorem}\label{res special}
For $1\leq k\leq n-2$, the restriction $\Res^{\sBr_n}_{\sBr_{n-1}}\CC_0(\Lambda^k V_n)$ of the irreducible calibrated $\sBr_n$-representation $\CC_0(\Lambda^k V_n)$ is indecomposable, and there is a non-split short exact sequence of calibrated $\sBr_{n-1}$-representations
$$0\rightarrow \CC_0(\Lambda^{k} V_{n-1})\rightarrow \Res^{\sBr_n}_{\sBr_{n-1}}\CC_0(\Lambda^k V_n)\rightarrow \CC_0(\Lambda^{k-1} V_{n-1})\rightarrow 0. $$
For the extremal cases $k=0$ and $k=n-1$ we have $\Res^{\sBr_n}_{\sBr_{n-1}}\CC_0(\Lambda^0 V_n)=\CC_0(\Lambda^0 V_{n-1})$ and $\Res^{\sBr_n}_{\sBr_{n-1}}\CC_0(\Lambda^{n-1} V_n)=\CC_0(\Lambda^{n-2} V_{n-1})$.
\end{theorem}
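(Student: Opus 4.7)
The plan is to identify an $\sBr_{n-1}$-subrepresentation $W_1$ and corresponding quotient of $\CC_0(\Lambda^k V_n)$ realizing the exact sequence, and then to show non-splitting by contradiction; the extremal cases are immediate.

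First, I would define $W_1\subset \CC_0(\Lambda^k V_n)$ to be the span of those basis wedges $\overline{v}_{a_1}\wedge\cdots\wedge\overline{v}_{a_k}$ with $a_k\leq n-2$. Directly from the formulas of Theorem~\ref{exterior powers}, the generators $s_i$ and $e_i$ for $1\leq i\leq n-2$ preserve $W_1$: the only way an index $n-1$ could appear in the image of such a wedge would be via the above-diagonal entry at position $(\ldots(n-2)\ldots,\ldots(n-1)\ldots)$ of $s_{n-2}$ or $e_{n-2}$, but that entry is only nonzero on columns labeled by wedges containing $n-1$, none of which occur in $W_1$. Matching formulas entry-by-entry with Theorem~\ref{exterior powers} applied to $V_{n-1}$ identifies $W_1\cong \CC_0(\Lambda^k V_{n-1})$ as $\sBr_{n-1}$-representations: the ``extremal'' matrix formulas for $s_{n-2}$ and $e_{n-2}$ in $\CC_0(\Lambda^k V_{n-1})$ arise precisely as the truncations of the $\CC_0(\Lambda^k V_n)$ formulas when the above-diagonal entries requiring $\overline{v}_{n-1}$ are dropped. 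Similarly, the quotient $\CC_0(\Lambda^k V_n)/W_1$ has a basis indexed by wedges $\overline{v}_{a_1}\wedge\cdots\wedge\overline{v}_{a_{k-1}}\wedge\overline{v}_{n-1}$, and the ``delete the $\overline{v}_{n-1}$'' map $\overline{v}_{a_1}\wedge\cdots\wedge\overline{v}_{a_{k-1}}\wedge\overline{v}_{n-1}\mapsto \overline{v}_{a_1}^{(n-1)}\wedge\cdots\wedge\overline{v}_{a_{k-1}}^{(n-1)}$ is checked directly to intertwine the induced $\sBr_{n-1}$-action with that of $\CC_0(\Lambda^{k-1} V_{n-1})$, yielding the short exact sequence.

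The main obstacle is showing non-splitting. I would argue by contradiction: suppose $\sigma\colon \CC_0(\Lambda^{k-1} V_{n-1})\to \CC_0(\Lambda^k V_n)$ is an $\sBr_{n-1}$-equivariant section, so each basis wedge $\overline{v}_{a_1}^{(n-1)}\wedge\cdots\wedge\overline{v}_{a_{k-1}}^{(n-1)}$ has a unique lift of the form $\overline{v}_{a_1}\wedge\cdots\wedge\overline{v}_{a_{k-1}}\wedge\overline{v}_{n-1}+w_{\underline{a}}$ with $w_{\underline{a}}\in W_1$. The key computation is that the above-diagonal entry of $e_{n-2}$ at position $(\ldots(n-2)\ldots,\ldots(n-1)\ldots)$ equals $+1$, so applying $e_{n-2}$ to the naive part of the lift produces the nonzero element $\overline{v}_{a_1}\wedge\cdots\wedge\overline{v}_{a_{k-1}}\wedge\overline{v}_{n-2}\in W_1$ (plus any below-diagonal corrections), which must be cancelled by $e_{n-2}w_{\underline{a}}$. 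Combined with equivariance under $e_1,\ldots,e_{n-3}$ and the $s_i$'s, one obtains a system of linear constraints on the coefficients of the $w_{\underline{a}}$'s that is inconsistent. The smallest instance is $\Res^{\sBr_3}_{\sBr_2}\CC_0(V_3)$, displayed in the Answer at the end of Section~\ref{cali reps and JM elts}; for $k=1$ and general $n$, writing $\sigma(1)=\overline{v}_{n-1}+\sum_{j=1}^{n-2}c_j\overline{v}_j$, the $e_i$-equivariance for $1\leq i\leq n-3$ forces the coefficients into an alternating pattern $c_{2m}=0$ and $c_{2m+1}=c_1$, while $e_{n-2}$-equivariance requires $c_{n-3}=1$ and $s_{n-2}$-equivariance requires $c_{n-2}=-(1+c_{n-3})/2$, yielding an outright contradiction for every $n\geq 3$.

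Finally, the extremal cases are immediate: $\CC_0(\Lambda^0 V_n)$ is by construction the trivial one-dimensional $\sBr_n$-representation (inflated from the trivial $\K S_n$-representation, with all $e_i$ acting by $0$), which restricts to the trivial $\sBr_{n-1}$-representation, namely $\CC_0(\Lambda^0 V_{n-1})$; similarly $\CC_0(\Lambda^{n-1} V_n)$ is the one-dimensional sign representation of $S_n$ with $e_i$'s acting by $0$, and its restriction is the sign representation of $S_{n-1}$, which equals $\CC_0(\Lambda^{n-2} V_{n-1})$.
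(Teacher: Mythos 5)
Your overall structure is sound and you identify the same subspace $W_1$ as the paper (spanned by wedges not involving $\overline{v}_{n-1}$), but there is a genuine gap in your non-splitting argument, and your route diverges from the paper's in an instructive way.

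\textbf{The gap.} You prove inconsistency of the linear system only for $k=1$ (and even there, the parity analysis is slightly mischaracterized: for $n$ odd the $e_i$-constraints alone force $c_{n-3}=0$ versus $c_{n-3}=1$, so the $s_{n-2}$ constraint $c_{n-2}=-(1+c_{n-3})/2$ is only needed when $n$ is even). For $1< k\leq n-2$ you write that ``one obtains a system of linear constraints on the coefficients of the $w_{\underline{a}}$'s that is inconsistent,'' but this is asserted, not demonstrated. Since the corrections $w_{\underline{a}}$ live in a $\binom{n-2}{k}$-dimensional space for each of $\binom{n-2}{k-1}$ basis wedges of the quotient, the bookkeeping grows quickly, and it is not obvious a priori that the system is inconsistent without either a careful general computation or a structural argument. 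This is exactly the step the theorem hinges on, so it cannot be waved through.

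\textbf{Comparison to the paper's approach.} The paper avoids the section-by-section analysis altogether. It shows $\End_{\sBr_{n-1}}(\Res\CC_0(\Lambda^k V_n))=\K$ by (i) observing that any endomorphism must commute with $Y_1,\dots,Y_n$ (using Corollary \ref{JM evalues} to throw in $Y_n$ as a scalar) and then, via the explicit eigenvalue patterns coming from $\diag(s_i)$ in Theorem \ref{exterior powers}, forcing the commutant into diagonal matrices; and (ii) noting that $\sum_{i=1}^{n-2}e_i$ has a ``path-connected'' pattern of $1$'s in its strictly upper-triangular part, so the only diagonal matrices commuting with it are scalars. A local endomorphism ring gives indecomposability uniformly in $k$, and non-splitting then follows since the two $\K S_{n-1}$-composition factors are non-isomorphic. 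This single argument handles every $k$ at once, which is exactly what your approach is missing. If you want to keep your route, you would need to either complete the general-$k$ system explicitly, or find a uniform argument (e.g., an inductive reduction to $k=1$, or a direct endomorphism-ring computation in the spirit of the paper).

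\textbf{Minor issues.} Your explanation of why $W_1$ is preserved misattributes the danger to ``the above-diagonal entry at position $(\ldots(n-2)\ldots,\ldots(n-1)\ldots)$''; that entry has a row not containing $n-1$, so it cannot introduce $n-1$ into the image. The correct observation is that every nonzero entry of $s_{n-2}$ or $e_{n-2}$ in a row whose wedge contains $n-1$ is in a column whose wedge also contains $n-1$ (the indices $n-2$ and $n-3$ only exchange among themselves, leaving any $n-1$ in place), so acting on $W_1$ never produces $n-1$. Also, the paper uses Lemma \ref{cupcap ideal 1}(3) to upgrade ``$W$ is an $S_{n-1}$-subrep preserved by $e_1$'' to ``$W$ is an $\sBr_{n-1}$-subrep,'' which streamlines that verification; your direct check of all $e_i$ is fine too, but worth noting. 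Your handling of the extremal cases matches the paper.
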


\begin{proof}
The extremal cases of $\Lambda^0 V_n$ and $\Lambda^{n-1}V_n$ are obvious since these are the trivial and the sign representations of $S_n$, respectively. For the rest of the proof we assume that $1\leq k\leq n-2$. 
Any $\sBr_n$-representation $V$ has a unique decomposition as a direct sum of irreducible $\K S_n$-representations, and if $V$ is irreducible as a $\K S_n$-representation then obviously $V$ is irreducible as an $\sBr_n$-representation. So there are two possibilities: either (i) $\Res^{\sBr_n}_{\sBr_{n-1}}\CC_0(\Lambda^k V_n)$ is indecomposable with two distinct composition factors isomorphic to $\Lambda^{k-1} V_{n-1}$ and $ \Lambda^k V_{n-1}$ as $\K S_{n-1}$-representations; or (ii) $\Res^{\sBr_n}_{\sBr_{n-1}}\CC_0(\Lambda^k V_n)$ is the direct sum of two irreducible $\sBr_{n-1}$-representations, isomorphic to $\Lambda^{k-1} V_{n-1}$ and $ \Lambda^k V_{n-1}$ as $\K S_{n-1}$-representations. In case (i), we have to further determine whether $\Res^{\sBr_n}_{\sBr_{n-1}}\CC_0(\Lambda^k V_n)$ is irreducible or whether it has two irreducible composition factors.

First, we establish that case (i) holds: $\Res^{\sBr_n}_{\sBr_{n-1}}\CC_0(\Lambda^k V_n)$ is indecomposable for any $k=0,\dots,n-1$. To check the cases $1\leq k\leq n-2$, we compute $\End_{\sBr_{n-1}}(\Res^{\sBr_n}_{\sBr_{n-1}}\CC_0(\Lambda^k V_n))$ and find that it is a local ring. 
First, we claim that $\End_{\sBr_{n-1}}(\Res^{\sBr_n}_{\sBr_{n-1}}\CC_0(\Lambda^k V_n))$ is contained in diagonal matrices. Let us compute the commutant of the matrices for $Y_2,\dots,Y_{n-1}$: write $\mathrm{Comm}(Y_2,\dots,Y_{n-1})$ for the subalgebra of ${n-1\choose k}\times {n-1\choose k}$ matrices commuting with $Y_2,\dots,Y_{n-1}$. Obviously, $\End_{\sBr_{n-1}}(\Res^{\sBr_n}_{\sBr_{n-1}}\CC_0(\Lambda^k V_n))\subseteq\mathrm{Comm}(Y_2,\dots,Y_{n-1})$. Since not only $Y_1$ but also $Y_n$ is a multiple of the identity matrix by Corollary \ref{JM evalues}, we may throw in $Y_1$ and $Y_n$ at no cost: $\mathrm{Comm}(Y_2,\dots,Y_{n-1})=\mathrm{Comm}(Y_1,Y_2,\dots,Y_{n-1},Y_n)$. Let $X=(x_{\ell,m})\in\mathrm{Comm}(Y_1,Y_2,\dots,Y_{n-1},Y_n)$. Now, suppose that for some $\ell\neq m$, $\ell,m\in\{1,\dots,{n-1\choose k}\}$, and some $i\in\{1,\dots,n-1\}$, it holds that $(Y_i)_{\ell,\ell}-(Y_{i+1})_{\ell,\ell}=-1$ and $(Y_i)_{m,m}-(Y_{i+1})_{m,m}=1$. Computing the $\ell,m$'th  matrix entry of $X(Y_i-Y_{i+1})$ we get that it is minus the $\ell,m$'th matrix entry of $(Y_i-Y_{i+1})X$, but $X(Y_i-Y_{i+1})=(Y_i-Y_{i+1})X$, and therefore $x_{\ell,m}=0$. By the same argument, $x_{m,\ell}=0$. However, by the definition of the action in Theorem \ref{exterior powers} we have $(Y_i)_{\ell,\ell}-(Y_{i+1})_{\ell,\ell}=-1$ whenever $(s_i)_{\ell,\ell}=-1$ and $(Y_i)_{m,m}-(Y_{i+1})_{m,m}=1$ whenever $(s_i)_{m,m}=1$ for all $i=1,\dots,n-1$. It is clear from the definition of the matrix of $s_i$ in Theorem \ref{exterior powers} that for any $\ell\neq m$ there is some $i$ such that $(s_i)_{\ell,\ell}=-(s_i)_{m,m}$. Thus all off-diagonal entries of $X$ vanish: $x_{\ell,m}=0=x_{m,\ell}$ for all $\ell\neq m$, and so $X$ is a diagonal matrix. Therefore $\End_{\sBr_{n-1}}(\Res^{\sBr_n}_{\sBr_{n-1}}\CC_0(\Lambda^k V_n))$ is contained in diagonal matrices.

Next, we claim that the diagonal matrices commuting with $e_1,\dots,e_{n-2}$ are just the scalar matrices. Observe that $i=1,\dots,n-2$ are all of the $i$ such that $e_i$ has nonzero entries above the diagonal, and that all nonzero entries above the diagonal are $1$'s. Moreover, if $(e_i)_{\ell,m}=1$ then $(e_j)_{\ell,m}=0$ for $i\neq j$, where $\ell<m$ are wedges of $k$ integers from $\{1,\dots,n-1\}$ with the wedges ordered lexicographically, i.e. the first row of the matrix is labeled $1\wedge 2\wedge\dots\wedge k$ and the last is labeled $(n-k)\wedge (n-k+1)\wedge\dots\wedge (n-1)$. If we take the sum $e:=\sum_{i=1}^{n-2}e_i$ then the entries above the diagonal of $e$ are $0$'s and $1$'s. The position of the $1$'s is defined in terms of increasing some $a_m$ to $a_m+1$ in the wedge $a_1\wedge a_2 \wedge\dots\wedge a_k$, where $a_{m+1}>a_m+1$. Starting from the first row $\ell=1\wedge 2\wedge\dots \wedge k$ and then successively increasing one of the numbers in the wedge by one at a time (where $a$ can only be increased to $a+1$ if $a+1$ does not already appear in the wedge), one traverses every $a_1\wedge\dots\wedge a_k$ in a row or column. It follows  that the only diagonal matrices commuting with $e$ are the scalar matrices. Therefore $\End_{\sBr_{n-1}}(\Res^{\sBr_n}_{\sBr_{n-1}}\CC_0(\Lambda^k V_n))=\K.$ This proves $\Res^{\sBr_n}_{\sBr_{n-1}}\CC_0(\Lambda^k V_n)$ is indecomposable. 

Now that we know $V:=\Res^{\sBr_n}_{\sBr_{n-1}}\CC_0(\Lambda^k V_n)$ is indecomposable, we have to determine whether it is irreducible or an extension of two irreducible representations. 
Let
\begin{align*}
I&=\left\{i\in\left\{1,2,\dots,{n-1\choose k}\right\} \mid  (Y_{n-1}-Y_n)\overline{v}_i=-\overline{v}_i\right\}, \\
J&=\left\{j\in\left\{1,2,\dots,{n-1\choose k}\right\}\mid (Y_{n-1}-Y_n)\overline{v}_j=\overline{v}_j\right\}. 
\end{align*}
Then $\{1,2,\dots,{n-1\choose k}\}=I\sqcup J$. We claim that $W:=\{\overline{v}_i\mid i\in I\}$ is an $\sBr_n$-subrepresentation of $V$ isomorphic to $\CC_0(\Lambda^{k}V_{n-1})$ and that $V/W\cong \CC_0( \Lambda^{k-1} V_{n-1})$ with basis given by the images of $\overline{v}_j$ for $j\in J$. First, observe that these vector subspaces indeed have the desired dimensions: 
\begin{align*}
|I|&=\#\left\{i\in\left\{1,2,\dots,{n-1\choose k}\right\}\mid (Y_{n-1}-Y_n)_{i,i}=-1\right\}\\
&=\#\left\{i\in\left\{1,2,\dots,{n-1\choose k}\right\}\mid (s_{n-1})_{i,i}=1\right\}\\
&=\#\{a_1\wedge a_2\wedge\dots\wedge a_k: 1\leq a_1<a_2<\ldots<a_k< n-1\}\\
&={n-2\choose k}=\dim\Lambda^{k}V_{n-1},
\end{align*}
and thus $$|J|={n-1\choose k}-{n-2\choose k}={n-2\choose k-1}=\dim \Lambda^{k-1} V_{n-1}.$$

Next, it is clear from the definition of the matrix $s_i$ that $s_1,\dots,s_{n-2}$ preserve the subspace $W$. Indeed: let $1\leq i\leq n-2$ and consider a wedge $(a_1\wedge\dots\wedge i\wedge\dots\wedge n-1)$ labeling a row of $s_i$ with off-diagonal nonzero entries and labeling a basis vector of the vector space complement to $W$ in $V$. Then $s_i$ has off-diagonal entries in the columns $(a_1\wedge\dots\wedge i\pm 1\wedge\dots\wedge a_{k-1}\wedge n-1)$ of this row (so long this does not cause a repeated entry making the wedge equal to $0$.) In either case, the column of the nonzero entry contains an $n-1$ in it so does not label one of the basis vectors of $W$. Thus if $\overline{v}_k\in W$, $k\in I$, is one of the eigenvectors in $W$ then $s_i\cdot \overline{v}_k\cap \mathrm{Span}\{\overline{v}_j\mid j\in J\}=\emptyset$. Moreover, $W$ must be irreducible as an $S_n$-representation since $V$ has exactly two composition factors as an $S_n$-representation and $0\subsetneq W\subsetneq V$.

Moreover, it is clear that $e_1$ preserves $W$ since the only nonzero entries of $e_1$ are given by
$e_1(1\wedge a_2\wedge\dots\wedge a_k,\;2\wedge a_2\wedge\dots\wedge a_k)=1$. Then by Lemma \ref{cupcap ideal 1}(\ref{symm subrep}) it follows that $W$ is an irreducible $\sBr_{n-1}$-subrepresentation of $V$. As $W\subsetneq V$ and $V$ has at most two composition factors, it follows that $V/W$ is an irreducible $\sBr_n$-representation as well. Now, $W$ is calibrated, the matrices for $s_1,\dots,s_{n-2}$ acting on $V$ are exactly those of $\Lambda^k V_{n-1}$, the matrices of $e_1,\dots,e_{n-2}$ have their nonzero entries in exactly the off-diagonal spots where $s_1,\dots,s_{n-2}$ have nonzero entries with $1$s above the diagonal and $-1$'s below the diagonal, and the eigenvalues of $Y_1,\dots,Y_{n-1}$ on $W$ are then given by Lemma \ref{evalues closed form} and match the formula in Theorem \ref{exterior powers}. So $W\cong\CC_0(\Lambda^k V_{n-1})$. By the same argument, $V/W\cong \CC_0(\Lambda^{k-1}V_{n-1})$. Therefore we have a short exact sequence 
$$0\rightarrow \CC_0(\Lambda^{k} V_{n-1})\rightarrow \Res^{\sBr_n}_{\sBr_{n-1}}\CC_0(\Lambda^k V_n)\rightarrow \CC_0(\Lambda^{k-1} V_{n-1})\rightarrow 0$$
as claimed.
\end{proof}

\subsection{A Bratteli diagram}

Consider the set of nonempty row or column partitions of size congruent to $n$ mod $2$ and at most $n$: $$\mathcal{RC}_n:=\left\{(k),(1^k)\mid k\in\left\{n-2r\mid 0\leq r\leq \Big\lfloor \frac{n-1}{2}\Big\rfloor\right\}\right\}.$$
Label the representations of $\CC_0(\Lambda^k V_n)$, $k=0,\dots, n-1$, by the elements of $\mathcal{RC}_n$ as follows: 
\begin{table}[H]
\centering
\begin{tabular}{c|c|c|c|c|c|c}
$\Lambda^0 V_n$ & $\Lambda^1 V_n$ & $\Lambda^2 V_n$ & $\cdots$ & $\Lambda^{n-3} V_n$ & $\Lambda^{n-2} V_n$ & $\Lambda^{n-1} V_n$\\
\hline
&&&&&&\\
$(n)$ & $(n-2)$ & $(n-4)$ & $\cdots$ & $(1^{n-4})$ & $(1^{n-2})$ & $(1^{n})$
\end{tabular}
\end{table}

Now define a graph $\Gamma$ whose vertices at level $n$ are given by $\mathcal{RC}_n$. There is an arrow from $\lambda$ at level $n$ to $\mu$ at level $n+1$ in $\Gamma$ if and only if $[\Res^{\sBr_{n+1}}_{\sBr_{n}}\CC_0(\Lambda^k V_{n+1}):\CC_0(\Lambda^j V_{n})]=1$ and $\CC_0(\Lambda^j V_n)$ corresponds to $\lambda$, $\CC_0(\Lambda^k V_{n+1})$ to $\mu$. Then $\Gamma$ is the branching graph of the tower of irreducible calibrated representations we have constructed in this section. The graph $\Gamma$ is isomorphic to Pascal's triangle; from level $1$ to level $6$ it looks like this:

$$\TikZ{[scale=.5]
\draw
(0,-2)node[]{$\tiny \young(0)$}
(2,-4)node[]{$\tiny \young(0,\yminusone)$}
(-2,-4)node[]{$\tiny \young(01)$}
(4,-6)node[]{$\tiny\young(0,\yminusone,\yminustwo)$}
(0,-6)node[]{$\tiny\young(0)$}
(-4,-6)node[]{$\tiny\young(012)$}
(6,-8)node[]{$\tiny\young(0,\yminusone,\yminustwo,\yminusthree)$}
(2,-8)node[]{$\tiny\young(0,\yminusone)$}
(-2,-8)node[]{$\tiny\young(01)$}
(-6,-8)node[]{$\tiny\young(0123)$}
(8,-10)node[]{$\tiny\young(0,\yminusone,\yminustwo,\yminusthree,\yminusfour)$}
(4,-10)node[]{$\tiny\young(0,\yminusone,\yminustwo)$}
(0,-10)node[]{$\tiny\young(0)$}
(-4,-10)node[]{$\tiny\young(012)$}
(-8,-10)node[]{$\tiny\young(01234)$}
(10,-12)node[]{$\tiny\young(0,\yminusone,\yminustwo,\yminusthree,\yminusfour,\yminusfive)$}
(6,-12)node[]{$\tiny\young(0,\yminusone,\yminustwo,\yminusthree)$}
(2,-12)node[]{$\tiny\young(0,\yminusone)$}
(-2,-12)node[]{$\tiny\young(01)$}
(-6,-12)node[]{$\tiny\young(0123)$}
(-10,-12)node[]{$\tiny\young(012345)$};
\draw[->,thick,magenta](-.5,-2.5)node{} to (-1.5,-3.5)node{};
\draw[->,thick,magenta](.5,-2.5)node{} to (1.5,-3.5)node{};
\draw[->,thick,magenta](-2.5,-4.5)node{} to (-3.5,-5.5)node{};
\draw[->,thick,magenta](-1.5,-4.5)node{} to (-.5,-5.5)node{};
\draw[->,thick,magenta](1.5,-4.5)node{} to (.5,-5.5)node{};
\draw[->,thick,magenta](2.5,-4.5)node{} to (3.5,-5.5)node{};
\draw[->,thick,magenta](-4.5,-6.5)node{} to (-5.5,-7.5)node{};
\draw[->,thick,magenta](-.5,-6.5)node{} to (-1.5,-7.5)node{};
\draw[->,thick,magenta](-3.5,-6.5)node{} to (-2.5,-7.5)node{};
\draw[->,thick,magenta](.5,-6.5)node{} to (1.5,-7.5)node{};
\draw[->,thick,magenta](3.5,-6.5)node{} to (2.5,-7.5)node{};
\draw[->,thick,magenta](4.5,-6.5)node{} to (5.5,-7.5)node{};
\draw[->,thick,magenta](-6.5,-8.5)node{} to (-7.5,-9.5)node{};
\draw[->,thick,magenta](5.5,-8.5)node{} to (4.5,-9.5)node{};
\draw[->,thick,magenta](1.5,-8.5)node{} to (.5,-9.5)node{};
\draw[->,thick,magenta](-2.5,-8.5)node{} to (-3.5,-9.5)node{};
\draw[->,thick,magenta](-5.5,-8.5)node{} to (-4.5,-9.5)node{};
\draw[->,thick,magenta](-1.5,-8.5)node{} to (-.5,-9.5)node{};
\draw[->,thick,magenta](2.5,-8.5)node{} to (3.5,-9.5)node{};
\draw[->,thick,magenta](6.5,-8.5)node{} to (7.5,-9.5)node{};
\draw[->,thick,magenta](-8.5,-10.5)node{} to (-9.5,-11.5)node{};
\draw[->,thick,magenta](-4.5,-10.5)node{} to (-5.5,-11.5)node{};
\draw[->,thick,magenta](-.5,-10.5)node{} to (-1.5,-11.5)node{};
\draw[->,thick,magenta](3.5,-10.5)node{} to (2.5,-11.5)node{};
\draw[->,thick,magenta](7.5,-10.5)node{} to (6.5,-11.5)node{};
\draw[->,thick,magenta](8.5,-10.5)node{} to (9.5,-11.5)node{};
\draw[->,thick,magenta](4.5,-10.5)node{} to (5.5,-11.5)node{};
\draw[->,thick,magenta](.5,-10.5)node{} to (1.5,-11.5)node{};
\draw[->,thick,magenta](-3.5,-10.5)node{} to (-2.5,-11.5)node{};
\draw[->,thick,magenta](-7.5,-10.5)node{} to (-6.5,-11.5)node{};
;}
$$
Reading across the row at level $n$ from left to right, the partitions $(n),(n-2),$ $\dots,(1^{n-2}),(1^n)$ correspond to $\CC_0(\Lambda^0 V_n),\CC_0(\Lambda^1 V_n),\dots,\CC_0(\Lambda^{n-2} V_n),\CC_0(\Lambda^{n-1} V_n).$

 Instead of labeling the vertices of the branching graph with partitions, let's label the vertices with $\dim\CC_0(\Lambda^k V_n)$ using restriction rule for the arrows. Doing this, we obtain Pascal's triangle:

$$\TikZ{[scale=.5]
\draw
(0,-2)node[]{$1$}
(2,-4)node[]{$1$}
(-2,-4)node[]{$1$}
(4,-6)node[]{$1$}
(0,-6)node[]{$2$}
(-4,-6)node[]{$1$}
(6,-8)node[]{$1$}
(2,-8)node[]{$3$}
(-2,-8)node[]{$3$}
(-6,-8)node[]{$1$}
(8,-10)node[]{$1$}
(4,-10)node[]{$4$}
(0,-10)node[]{$6$}
(-4,-10)node[]{$4$}
(-8,-10)node[]{$1$}
(10,-12)node[]{$1$}
(6,-12)node[]{$5$}
(2,-12)node[]{$10$}
(-2,-12)node[]{$10$}
(-6,-12)node[]{$5$}
(-10,-12)node[]{$1$};
\draw[->,thick,magenta](-.5,-2.5)node{} to (-1.5,-3.5)node{};
\draw[->,thick,magenta](.5,-2.5)node{} to (1.5,-3.5)node{};
\draw[->,thick,magenta](-2.5,-4.5)node{} to (-3.5,-5.5)node{};
\draw[->,thick,magenta](-1.5,-4.5)node{} to (-.5,-5.5)node{};
\draw[->,thick,magenta](1.5,-4.5)node{} to (.5,-5.5)node{};
\draw[->,thick,magenta](2.5,-4.5)node{} to (3.5,-5.5)node{};
\draw[->,thick,magenta](-4.5,-6.5)node{} to (-5.5,-7.5)node{};
\draw[->,thick,magenta](-.5,-6.5)node{} to (-1.5,-7.5)node{};
\draw[->,thick,magenta](-3.5,-6.5)node{} to (-2.5,-7.5)node{};
\draw[->,thick,magenta](.5,-6.5)node{} to (1.5,-7.5)node{};
\draw[->,thick,magenta](3.5,-6.5)node{} to (2.5,-7.5)node{};
\draw[->,thick,magenta](4.5,-6.5)node{} to (5.5,-7.5)node{};
\draw[->,thick,magenta](-6.5,-8.5)node{} to (-7.5,-9.5)node{};
\draw[->,thick,magenta](5.5,-8.5)node{} to (4.5,-9.5)node{};
\draw[->,thick,magenta](1.5,-8.5)node{} to (.5,-9.5)node{};
\draw[->,thick,magenta](-2.5,-8.5)node{} to (-3.5,-9.5)node{};
\draw[->,thick,magenta](-5.5,-8.5)node{} to (-4.5,-9.5)node{};
\draw[->,thick,magenta](-1.5,-8.5)node{} to (-.5,-9.5)node{};
\draw[->,thick,magenta](2.5,-8.5)node{} to (3.5,-9.5)node{};
\draw[->,thick,magenta](6.5,-8.5)node{} to (7.5,-9.5)node{};
\draw[->,thick,magenta](-8.5,-10.5)node{} to (-9.5,-11.5)node{};
\draw[->,thick,magenta](-4.5,-10.5)node{} to (-5.5,-11.5)node{};
\draw[->,thick,magenta](-.5,-10.5)node{} to (-1.5,-11.5)node{};
\draw[->,thick,magenta](3.5,-10.5)node{} to (2.5,-11.5)node{};
\draw[->,thick,magenta](7.5,-10.5)node{} to (6.5,-11.5)node{};
\draw[->,thick,magenta](8.5,-10.5)node{} to (9.5,-11.5)node{};
\draw[->,thick,magenta](4.5,-10.5)node{} to (5.5,-11.5)node{};
\draw[->,thick,magenta](.5,-10.5)node{} to (1.5,-11.5)node{};
\draw[->,thick,magenta](-3.5,-10.5)node{} to (-2.5,-11.5)node{};
\draw[->,thick,magenta](-7.5,-10.5)node{} to (-6.5,-11.5)node{};
;}
$$
Thus our Bratteli diagram of irreducible calibrated representations can be seen as a  categorification of Pascal's triangle. Pascal's triangle also shows up in the Bratteli diagrams of the blob algebra \cite{TheBlob} and the planar rook algebra \cite{PascalTriangle}.

Finally, there is one more natural way to label the vertices of our branching graph. Consider again the Young diagrams of partitions in $\cup_{n\geq 1}\mathcal{RC}_n$ labeling the vertices of $\Gamma$. There is an obvious combinatorial rule to build this graph of Young diagrams: start with the Young diagram consisting of a single box at level $1$ of the graph. The Young diagrams at level $n+1$ are built from those at level $n$ by adding or removing a single box so long as the result is a non-empty Young diagram with a single row or column. There is an arrow $\lambda\rightarrow \mu$ if $\lambda\in\mathcal{RC}_n$, $\mu\in\mathcal{RC}_{n+1}$, and $\mu$ is obtained from $\lambda$ by adding or removing a box. Now, we observe that a single row or column partition has a unique removable box. We then label the vertex occupied by that partition with the content of its removable box. This produces a graph whose vertices are integers, and which is generated by starting with $0$ and then has an arrow from $a$ at level $n$ to $b$ at level $n+1$ if and only if $b=a\pm 1$.

$$\TikZ{[scale=.5]
\draw
(0,-2)node[]{$0$}
(2,-4)node[]{$-1$}
(-2,-4)node[]{$1$}
(4,-6)node[]{$-2$}
(0,-6)node[]{$0$}
(-4,-6)node[]{$2$}
(6,-8)node[]{$-3$}
(2,-8)node[]{$-1$}
(-2,-8)node[]{$1$}
(-6,-8)node[]{$3$}
(8,-10)node[]{$-4$}
(4,-10)node[]{$-2$}
(0,-10)node[]{$0$}
(-4,-10)node[]{$2$}
(-8,-10)node[]{$4$}
(10,-12)node[]{$-5$}
(6,-12)node[]{$-3$}
(2,-12)node[]{$-1$}
(-2,-12)node[]{$1$}
(-6,-12)node[]{$3$}
(-10,-12)node[]{$5$};
\draw[->,thick,magenta](-.5,-2.5)node{} to (-1.5,-3.5)node{};
\draw[->,thick,magenta](.5,-2.5)node{} to (1.5,-3.5)node{};
\draw[->,thick,magenta](-2.5,-4.5)node{} to (-3.5,-5.5)node{};
\draw[->,thick,magenta](-1.5,-4.5)node{} to (-.5,-5.5)node{};
\draw[->,thick,magenta](1.5,-4.5)node{} to (.5,-5.5)node{};
\draw[->,thick,magenta](2.5,-4.5)node{} to (3.5,-5.5)node{};
\draw[->,thick,magenta](-4.5,-6.5)node{} to (-5.5,-7.5)node{};
\draw[->,thick,magenta](-.5,-6.5)node{} to (-1.5,-7.5)node{};
\draw[->,thick,magenta](-3.5,-6.5)node{} to (-2.5,-7.5)node{};
\draw[->,thick,magenta](.5,-6.5)node{} to (1.5,-7.5)node{};
\draw[->,thick,magenta](3.5,-6.5)node{} to (2.5,-7.5)node{};
\draw[->,thick,magenta](4.5,-6.5)node{} to (5.5,-7.5)node{};
\draw[->,thick,magenta](-6.5,-8.5)node{} to (-7.5,-9.5)node{};
\draw[->,thick,magenta](5.5,-8.5)node{} to (4.5,-9.5)node{};
\draw[->,thick,magenta](1.5,-8.5)node{} to (.5,-9.5)node{};
\draw[->,thick,magenta](-2.5,-8.5)node{} to (-3.5,-9.5)node{};
\draw[->,thick,magenta](-5.5,-8.5)node{} to (-4.5,-9.5)node{};
\draw[->,thick,magenta](-1.5,-8.5)node{} to (-.5,-9.5)node{};
\draw[->,thick,magenta](2.5,-8.5)node{} to (3.5,-9.5)node{};
\draw[->,thick,magenta](6.5,-8.5)node{} to (7.5,-9.5)node{};
\draw[->,thick,magenta](-8.5,-10.5)node{} to (-9.5,-11.5)node{};
\draw[->,thick,magenta](-4.5,-10.5)node{} to (-5.5,-11.5)node{};
\draw[->,thick,magenta](-.5,-10.5)node{} to (-1.5,-11.5)node{};
\draw[->,thick,magenta](3.5,-10.5)node{} to (2.5,-11.5)node{};
\draw[->,thick,magenta](7.5,-10.5)node{} to (6.5,-11.5)node{};
\draw[->,thick,magenta](8.5,-10.5)node{} to (9.5,-11.5)node{};
\draw[->,thick,magenta](4.5,-10.5)node{} to (5.5,-11.5)node{};
\draw[->,thick,magenta](.5,-10.5)node{} to (1.5,-11.5)node{};
\draw[->,thick,magenta](-3.5,-10.5)node{} to (-2.5,-11.5)node{};
\draw[->,thick,magenta](-7.5,-10.5)node{} to (-6.5,-11.5)node{};
;}
$$
This third presentation of the branching graph records the eigenvalues of the Jucys--Murphy elements acting on the basis of $Y$-eigenvectors for the vertices $\CC_0(\Lambda^k V_n)$.

Fix a level $n$ of the branching graph $\Gamma$, say $n=6$. Fix $\lambda$ at level $n$ of the graph; sticking with $n=6$, take, say, $\lambda={\tiny\yng(2)}$. Consider a path $T$ in the graph from the source vertex ${\tiny\yng(1)}$ at level $1$ to $\lambda$. The set of all such paths $T$ label the basis of eigenvectors for $\CC_0(\Lambda^k V_n)$ under the correspondence above. For example, we can take $T$ to be the path $(0,-1,0,1,0,1)$ from the source of the graph to ${\tiny\yng(2)}$ at level $6$. If $T'$ is another path from the source to the same vertex ${\tiny\yng(2)}$, we can get $T'$ from $T$ by modifying $T$ some number of times by ``going the other way around a diamond." So for example, if $T'=(0,1,0,1,0,1)$ then we obtain $T'$ from $T$ by going around the left side of the topmost diamond in the graph through ${\tiny\yng(2)}$ instead of around the right side of the diamond through ${\tiny\yng(1,1)}$. On our third, numerical graph, this corresponds to adding $2$ to the eigenvalue of $Y_2$. 

In general, let $\lambda$ be a vertex in $\Gamma$. The paths $T$ from the source vertex to $\lambda$ at level $n$, and thus the spectrum of the irreducible representation $\CC_0(\Lambda^k V_n)\in\sBr_n$-mod, are given by all $n$-tuples of integers of the following form: $$\Spec\CC_0(\Lambda^k V_n)=\{(a_1,\dots,a_n)\in\Z^n\mid a_1=0,\; a_n=\mathrm{ct}(\mathrm{b}_\lambda),\;a_{i+1}=a_i\pm 1 \hbox{ if }1\leq i\leq n-1\}$$
where $\ct(\mathrm{b}_\lambda)$ is the content of the unique removable box of $\lambda$, so 
$$\ct(\mathrm{b}_\lambda)=
\begin{cases} 
\:\:\:k-1&\hbox{ if }\lambda=k,\\
-k+1&\hbox{ if }\lambda=(1^k).
\end{cases}$$ Starting from any given path from ${\tiny\yng(1)}$ at level $1$ to $\lambda$ at level $n$, we can obtain any other path with the same starting and ending points by successively adding or subtracting $2$ from various $a_j$, so long as we do not change $a_1$, we do not change $a_n$, and at each step of the process we preserve the property that $a_{i+1}=a_i\pm 1$ for  all $1\leq i\leq n-1$. A canonical way of doing this is to start with the vector $\overline{v}=(0,1,2,\dots,n-2-k,n-1-k,n-2-k,\dots,n-2k,n-1-2k)$ which is the path to $\lambda$ whose first $n-k-1$ steps are to the left and the remaining $k$ steps are to the right. Then generate the rest of the paths by subtracting multiples of $2$ from entries of $\overline{v}$ in all possible ways while staying in $\Spec\LL(\lambda)$. In this way we can easily generate $\Spec\CC_0(\Lambda^k V_n)$ for any $\lambda$ a vertex in $\Gamma$, that is, we can algorithmically generate the matrices $y_1,\dots,y_n$ which act diagonally on $\CC_0(\Lambda^k V_n)$.

\begin{example}
Let $n=12$ and let $\lambda=(10)$. We calculate $\Spec \CC_0(\Lambda^1 V_{12})$: 
\begin{align*}
\Spec \CC_0(\Lambda^1 V_{12})=\{&(0,1,2,3,4,5,6,7,8,9,10,9),\\&(0,1,2,3,4,5,6,7,8,9,8,9),\\&(0,1,2,3,4,5,6,7,8,7,8,9),\\&(0,1,2,3,4,5,6,7,6,7,8,9),\\&(0,1,2,3,4,5,6,5,6,7,8,9),\\&
(0,1,2,3,4,5,4,5,6,7,8,9),\\&(0,1,2,3,4,3,4,5,6,7,8,9),\\&(0,1,2,3,2,3,4,5,6,7,8,9),\\&
(0,1,2,1,2,3,4,5,6,7,8,9),\\&(0,1,0,1,2,3,4,5,6,7,8,9),\\&(0,-1,0,1,2,3,4,5,6,7,8,9)
\}. 
\end{align*}
We can then read off the matrix for $Y_j$, $j=1,\dots,n$, as the $j$'th column of this array:  $Y_1=\diag(0,0,\dots,0)$, $Y_2=\diag(1,\dots,1,-1)$, $Y_3=\diag(2,\dots,2,0,0)$, and so on. In particular we see that $Y_j$ acting on $\CC_0(\Lambda^1 V_{n})$ has a simple closed form: it is given by the diagonal matrix whose first $n-j$ entries are $j-1$ and whose last $j-1$ entries are $j-3$. Symmetrically, there is an easy formula for the matrices of $Y_j$ acting on $\CC_0(\Lambda^{n-2} V_{n})$, and we leave this for the reader.
\end{example}

\subsection{Final remarks} 1. We \textit{expect} that the irreducible module $\CC_0(\Lambda^k V_n)$ is identified in the labeling conventions of \cite{Coulembier} (and possibly up to taking the transpose)
 as follows:
$$\CC_0(\Lambda^k V_n)=
\begin{cases} \LL(n-2k)\quad &\hbox{ if } k\leq \frac{n-1}{2},\\ 
\LL(1^{2(k+1)-n})\quad &\hbox { if } k\geq \frac{n-1}{2}. 
\end{cases}$$
That is, we should have:
\begin{table}[H]
\centering
\begin{tabular}{c|c|c|c|c|c|c}
$\CC_0(\Lambda^0 V_n)$ & $\CC_0(\Lambda^1 V_n)$ & $\CC_0(\Lambda^2 V_n)$ & $\cdots$ & $\CC_0(\Lambda^{n-3} V_n)$ & $\CC_0(\Lambda^{n-2} V_n)$ & $\CC_0(\Lambda^{n-1} V_n)$\\
\hline
&&&&&&\\
$\LL(n)$ & $\LL(n-2)$ & $\LL(n-4)$ & $\cdots$ & $\LL(1^{n-4})$ & $\LL(1^{n-2})$ & $\LL(1^{n})$
\end{tabular}
\end{table}
\noindent However, at the time of writing we don't understand how to make this identification rigorous. Our construction is self-contained and doesn't use cell modules, whereas the irreducible representation $\LL(\lambda)$, $\lambda\neq \emptyset$ a partition of $n$ or $n-2$ or $n-4$ or $\ldots$ is defined as the quotient of the cell module $\WW(\lambda)$ by its radical. The rule for eigenvalues in our Bratteli diagram does not completely match the rule for the eigenvalues  of the Jucys--Murphy elements on the cell modules in the Bratteli diagram of up-down tableaux: it differs in the case where a box is removed and the content of the removed box is positive \cite[Lemma 6.2.5]{Coulembier}. In \cite[Lemma 6.2.5]{Coulembier} the eigenvalue is always $\ct(b)+1$ where $b$ is the box removed. But in our branching graph of irreducible calibrated representations, the eigenvalue produced by removing a box is $\ct(b)+1$ if $\ct(b)<0$ and $\ct(b)-1$ if $\ct(b)>0$. It is worth remarking that our sign conventions for the defining relations of $\sBr_n$ that mix $e$'s and $s$'s are opposite to those of \cite{Moon} and \cite{Coulembier}. This has the consequence that when $n=2$ for example, the cell module $\WW(\emptyset)$ would have content sequence $(0,-1)$ in our convention, and $(0,1)$ in the conventions of \cite{Coulembier}. We expect that once the differing sign conventions are taken into account, the rest of the discrepancy arises because the cell module $\WW(\emptyset)\in\sBr_n$-mod, $n$ even, is actually irreducible and isomorphic to $\Lambda^{\frac{n}{2}} V_n$. The cell module $\WW(\emptyset)$ is the unique cell module equal to its radical and $\emptyset$ does not \textit{label} an irreducible representation. Perhaps some of the paths we have found factor through $\emptyset$ at even levels and this accounts for the discrepancy between the rule for cell modules in general and the rule for this particularly special tower of irreducible representations. 

2. Which irreducible $\sBr_n$-representations are calibrated? Between the representations $\CC_0(\Lambda^k V_n)$ constructed in this paper and the representations $\LL(\lambda)$ where  $\lambda$ is a partition of $n$ (the embedding of $\K S_n$-mod in $\sBr_n$-mod), have we found all of them or are there more? To frame the question in terms of $\Spec \LL$ where $\LL$ is an irreducible calibrated representation: in this paper we constructed all irreducible $\LL\in\sBr_n$-mod such that the $n$-tuples of eigenvalues $(\alpha_1,\alpha_2,\dots,\alpha_n)\in\Spec\LL$ satisfy $\alpha_j-\alpha_{j+1}=\pm 1$ for all $j=1,\dots,n-1$. Are there irreducible calibrated representations $\LL\in\sBr_n$-mod for which $\alpha_j-\alpha_{j+1}$ is not always $\pm 1$ but on which $e_i$ doesn't act by $0$?\\

\section*{Appendix}\label{code}
\subsection{Code for Theorem~\ref{reflection rep}} We check that the matrices given in Theorem \ref{reflection rep} satisfy the defining relations of $\sv_n$ in Definition \ref{def svn} using the
code below.

\tiny 
\begin{lstlisting}[language=Mathematica,caption={Code for Theorem~\ref{reflection rep}}]
n=7; (*number of strands*)
m=n-1; (*size of matrix representation*)
a; (*eigenvalue of y[1]*)
y[1]=a IdentityMatrix[m];
e[1]=DiagonalMatrix[{1},1,m];
s[1]=IdentityMatrix[m]+DiagonalMatrix[{-2},0,m]+DiagonalMatrix[{-1},1,m];
e[m]=Reverse/@(Transpose[Reverse/@DiagonalMatrix[{-1},-1,m]]);
s[m]=Reverse/@(Transpose[Reverse/@DiagonalMatrix[{-1},-1,m]])+Reverse/@(
    Transpose[Reverse/@DiagonalMatrix[{-2},0,m]])+IdentityMatrix[m];
For[i=2,i<m,i++,{e[i]=DiagonalMatrix[Table[-KroneckerDelta[i-1,k],{k,i-1}],-1,m]+DiagonalMatrix[Table[KroneckerDelta[i,k],{k,i}],1,m],s[i]=DiagonalMatrix[Table[-2 KroneckerDelta[i,k],{k,i}],0,m]+DiagonalMatrix[-Table[KroneckerDelta[i-1,k],{k,i-1}],-1,m]+DiagonalMatrix[-Table[KroneckerDelta[i,k],{k,i}],1,m]+IdentityMatrix[m]}]; 
For[i=2,i<=n,i++,y[i]=y[i-1]+DiagonalMatrix[-2 
    Table[KroneckerDelta[i-1,k],{k,i-1}],0,m]+IdentityMatrix[m]];
For[i=1,i<n,i++,Print["sVW1,i=", i,",",MatrixForm[s[i].s[i]-IdentityMatrix[m]]]]
For[j=1,j<n,j++,For[i=1,Abs[i-j]>1,i++,Print["sVW2(i),{j,i}=",{j,i},",",
    MatrixForm[s[i].e[j]-e[j].s[i]]]]]
For[j=1,j<n,j++,For[i=1,Abs[i-j]>1,i++,Print["sVW2(i),{j,i}=",{j,i},",",
    MatrixForm[s[j].e[i]-e[i].s[j]]]]]
For[j=1,j<n,j++,For[i=1,Abs[i-j]>1,i++,Print["sVW2(ii),{j,i}=",{j,i},",",
    MatrixForm[e[i].e[j]-e[j].e[i]]]]]
For[j=1,j<=n,j++,For[i=1,Abs[i-j]>1,i++,Print["sVW2(iii),{j,i}=",{j,i},",",
    MatrixForm[e[i].y[j]-y[j].e[i]]]]] 
For[i=1,i<n,i++,For[j=1,Abs[i-j]>1,j++,Print["sVW2(iii),{j,i}=",{j,i},",",
    MatrixForm[e[i].y[j]-y[j].e[i]]]]] 
For[i=2,i<=n-1,i++,Print["sVW2(iii),{j=i-1,i}=",{i-1,i},",",
    MatrixForm[e[i].y[i-1]-y[i-1].e[i]]]]
For[i=1,i<=n,i++,For[j=1,j<=n,j++, Print["sVW2(iv),i=",i,",",
    MatrixForm[y[i].y[j]-y[j].y[i]]]]]
For[i=1,i<n,i++,For[j=1,Abs[i-j]>1,j++,Print["sVW3(i),{j,i}=",{j,i},",",
    MatrixForm[s[i].s[j]-s[j].s[i]]]]]
For[i=1,i<n-1,i++,Print["sVW3(ii),i=",i,",",
    MatrixForm[s[i].s[i+1].s[i]-s[i+1].s[i].s[i+1]]]]
For[j=1,j<=n,j++,For[i=1,Abs[i-j]>1,i++,
    Print["sVW3(iii),{j,i}=",{j,i},",",MatrixForm[s[i].y[j]-y[j].s[i]]]]]
For[i=1,i<n,i++,For[j=1,Abs[i-j]>1,j++,
    Print["sVW3(iii),{j,i}=",{j, i},",",MatrixForm[s[i].y[j]-y[j].s[i]]]]]
For[i=2,i<=n-1,i++,Print["sVW3(iii),{j=i-1,i}=",{i-1,i},",",MatrixForm[s[i].y[i-1]-y[i-1].s[i]]]]
For[i=1,i<n-1,i++,Print["sVW4(i),i=",i,",",MatrixForm[e[i+1].e[i].e[i+1]+e[i+1]]]]
For[i=1,i<n-1,i++,Print["sVW4(ii),i="i,",",MatrixForm[e[i].e[i+1].e[i]+e[i]]]]
For[i=1,i<n,i++,Print["sVW5(i)a,i=",i,",",MatrixForm[e[i].s[i]-e[i]],",sVW5(i)b,i=",i,",",
    MatrixForm[s[i].e[i]+e[i]]]]
For[i=1,i<n-1,i++,Print["sVW5(ii),i=",i,",",MatrixForm[s[i].e[i+1].e[i]-s[i+1].e[i]]]]
For[i=1,i<n-1,i++,Print["sVW5(iii),i=",i,",",MatrixForm[s[i+1].e[i].e[i+1]+s[i].e[i+1]]]]
For[i=1,i<n-1,i++,Print["sVW5(iv),i=",i,",",MatrixForm[e[i+1].e[i].s[i+1]-e[i+1].s[i]]]]
For[i=1,i<n-1,i++,Print["sVW5(v),i=",i,",",MatrixForm[e[i].e[i+1].s[i]+e[i].s[i+1]]]]
For[i=1,i<n,i++,Print["sVW6,i=",i,",",MatrixForm[e[i].e[i]]]]
For[i=1,i<n,i++,Print["sVW7(i),i=",i,",",MatrixForm[s[i].y[i]-y[i+1].s[i]+e[i]+IdentityMatrix[m]]]]
For[i=1,i<n,i++,Print["sVW7(ii),i=",i,",",MatrixForm[y[i].s[i]-s[i].y[i+1]-e[i]+IdentityMatrix[m]]]]
For[i=1,i<n,i++,Print["sVW8(i),i=",i,",",MatrixForm[e[i].(y[i]-y[i+1])+e[i]]]]
For[i=1,i<n,i++,Print["sVW8(ii),i=",i,",",MatrixForm[(y[i]-y[i+1]).e[i]-e[i]]]]
\end{lstlisting}
\normalsize

\section*{Acknowledgments}{\footnotesize
This material is based upon work supported by the National Science Foundation under Grant No. DMS-1440140, and the National Security Agency under Grant No. h98230-18-1-0144 while the authors were in residence at the Mathematical Sciences Research Institute in Berkeley, California, for two weeks during the Summer of 2018. Lemma \ref{J rad sv2} was proved during that residence by the first author after conversations with our co-residents in the program, Z. Daugherty and I. Halacheva. The problems of determining the Jacobson radical and calibrated representations of $\sv_n$ were discussed there with Z. Daugherty and I. Halacheva, and several of their computations of matrix equations arising from a calibrated representation of $\sv_2$ led to our first paper joint with them \cite{some of us} and are used here in Lemma \ref{basicbitch}. We are grateful to Chris Bowman and Kevin Coulembier for helpful communications.
}


\begin{thebibliography}{10}

\bibitem{us+}
M. Balagovic, Z. Daugherty, I. Entova-Aizenbud, I. Halacheva, J. Hennig, M.S. Im, G. Letzer, E. Norton, V. Serganova, and C. Stroppel. The affine VW supercategory, arXiv:1801.04178.

\bibitem{Bowmanetal} 
C. Bowman, M. De Visscher, and J. Enyang.
Simple modules for the partition algebra and monotone convergence of Kronecker coefficients. 
{\em Int. Math. Res. Not.} IMRN 2019, no. 4, 1059--1097. 

\bibitem{BowmanEnyangGoodman}
C. Bowman, J. Enyang, and F. Goodman.
Diagram algebras, dominance triangularity and skew cell modules. 
{\em J. Aust. Math. Soc.} 104 (2018), no. 1, 13--36. 

\bibitem{BNS}
C. Bowman, E. Norton, and J. Simental.
Characteristic-free bases and BGG resolutions of unitary simple modules for quiver Hecke and Cherednik algebras. 
arXiv:1803.08736.

\bibitem{ChenPeng}
C.-W. Chen and Y.-N. Peng.
Affine periplectic Brauer algebras. 
{\em J. Algebra} 501 (2018), 345--372. 

\bibitem{Cherednik}
I. Cherednik. On special bases of irreducible finite-dimensional representations of the
degenerate affine Hecke algebra. {\em Func. Anal. Appl.} 20 (1986), 76–78.

\bibitem{Coulembier}
K. Coulembier.
The periplectic Brauer algebra. 
 {\em Proc. Lond. Math. Soc.} (3) 117 (2018), no. 3, 441--482. 
 
\bibitem{CoulembierEhrig1}
K. Coulembier and M. Ehrig.
The periplectic Brauer algebra II: Decomposition multiplicities. 
{\em J. Comb. Algebra} 2 (2018), no. 1, 19--46. 

\bibitem{CoulembierEhrig2}
K. Coulembier and M. Ehrig.
The periplectic Brauer algebra III: The Deligne category.
arXiv:1704.07547

\bibitem{ChrissGinzburg}
N. Chriss and V. Ginzburg.
{ \em Representation Theory and Complex Geometry}.
Birkh\"{a}user Boston, 1997.

\bibitem{some of us}
Z. Daugherty, I. Halacheva, M.S. Im, and E. Norton. On calibrated representations of the degenerate affine periplectic Brauer algebra, arXiv:1905.05148.

\bibitem{Drinfeld}
V. G. Drinfeld. Degenerate affine Hecke algebras and Yangians, {\em Funktsional. Anal. i Prilozhen.}, 20:1 (1986), 69--70; {\em Funct. Anal. Appl.}, 20:1 (1986), 58--60.


\bibitem{EncyclMath}
``Jacobson radical."
The Encyclopaedia of Mathematics, vol. 5. Ed. Vinogradov. Kluwer Academic Publishers, 1990.

\bibitem{Enyang1}
J. Enyang.
Jucys-Murphy elements and a presentation for partition algebras. 
{\em J. Algebraic Combin.} 37 (2013), no. 3, 401--454. 

\bibitem{Enyang2}
J. Enyang.
A seminormal form for partition algebras. 
{\em J. Combin. Theory Ser. A} 120 (2013), no. 7, 1737--1785. 


\bibitem{PascalTriangle}
D. Flath, T. Halverson, and K. Herbig.
The planar rook algebra and Pascal's triangle.
{\em Enseign. Math.} (2) 55 (2009), no. 1-2, 77--92. 

\bibitem{FultonHarris}
W. Fulton and J. Harris.
{\em Representation theory. A first course.}  Graduate Texts in Mathematics, 129. Readings in Mathematics. Springer-Verlag, New York, 1991.

\bibitem{TheBlob}
A. Gainutdinov, J.L. Jacobsen, H. Saleur, and R. Vasseur.
A physical approach to the classification of indecomposable Virasoro representations from the blob algebra.
{\em Nuclear Phys. B} 873 (2013), no. 3, 614--681. 

\bibitem{Jucys}
A. Jucys. Symmetric polynomials and the center of the symmetric group ring. {\em Reports
Math. Phys.} 5 (1974), 107--112.

\bibitem{Kleshchev}
A. Kleshchev. {\em Linear and Projective Representations of Symmetric Groups.} 
Cambridge Tracts in Mathematics 163, Cambridge University Press, 2005.

\bibitem{KriloffRam}
C. Kriloff and A. Ram.
Representations of graded Hecke algebras. 
{\em Represent. Theory} 6 (2002), 31--69. 

\bibitem{KujawaTharp}
J.R. Kujawa and B.C. Tharp.
The marked Brauer category. 
{\em J. Lond. Math. Soc.} (2) 95 (2017), no. 2, 393--413. 

\bibitem{Lusztig} G. Lusztig, Affine Hecke algebras and their graded version, {\em J. Am. Math. 
Soc.} 2, No 3 (1989), 599--635. 


\bibitem{Moon}
D. Moon.
The centralizer algebra of the Lie superalgebra $\mathfrak{p}(n)$ and the decomposition of $V^{\otimes k}$ as a $\mathfrak{p}(n)$-module. {\em Recent progress in algebra} (Taejon/Seoul, 1997), 199--212,
{\em Contemp. Math.}, 224, Amer. Math. Soc., Providence, RI, 1999. 

\bibitem{Murphy}
G. Murphy. A new construction of Young’s seminormal representation of the symmetric
group. {\em J. Algebra} 69 (1981), 287--291.

\bibitem{Nazarov}
M. Nazarov.
Young's orthogonal form for Brauer's centralizer algebra.
{\em J. Algebra} 182 (1996), no. 3, 664--693. 

\bibitem{OkounkovVershik}
A. Okounkov and A. Vershik.
A new approach to representation theory of symmetric groups.
{\em Selecta Math. (N.S.)} 2 (1996), no. 4, 581--605. 


\bibitem{Ruff} 
O. Ruff.
Completely splittable representations of symmetric groups and affine Hecke algebras.
{\em J. Algebra} 305 (2006), no. 2, 1197--1211. 



\end{thebibliography}
\end{document}